\newtheorem{theorem}{Theorem}[section]
\newtheorem{lemma}[theorem]{Lemma}
\theoremstyle{definition}
\newtheorem{definition}{Definition}
\title{Presentation of the Motzkin Monoid}
\author{Kris Hatch, Megan Ly, Eliezer Posner}
\begin{document}           
\maketitle                 

\begin{abstract}
In 2010, Tom Halverson and Georgia Benkart introduced the Motzkin algebra, a generalization of the Temperley-Lieb algebra, whose elements are diagrams that can be multiplied by stacking one on top of the other. Halverson and Benkart gave a diagrammatic algorithm for decomposing any Motzkin diagram into diagrams of three subalgebras: the Right Planar Rook algebra, the Temperley-Lieb algebra, and the Left Planar Rook algebra. We first explore the Right and Left Planar Rook monoids, by finding presentations for these monoids by generators and relations, using a counting argument to prove that our relations suffice. We then turn to the newly-developed Motzkin monoid, where we describe Halverson's decomposition algorithm algebraically, find a presentation by generators and relations, and use a counting argument but with a much more sophisticated algorithm. 
\end{abstract}
\section{Introduction} 
\hspace{6mm}	In this paper, the diagram monoids $R_n$ and one of its submonoids $P_n$ are first introduced in order to get an understanding of the diagram monoids $RP_n$ and $LP_n$,which are submonoids of $P_n$.  Presentations of the latter two are provided in this paper, as well as references to papers in which presentations for the former two are provided.  We then introduce the well known Temperley-Lieb Algebra, $TL_n$.   With these diagram monoids and the Temperley-Lieb algebra, 
we can give a presentation of the Motzkin Monoid, $M_n$.  In particular, we show that any diagram in $M_n$ can be decomposed into a product of the form $RTL$, where $R \in RP_n$, $T \in TL_n$, and $L \in LP_n$. (Note there is a particular form called ``standard" form mainly involving the placement of empty vertices.)   This is particularly important because of the inductive nature of our proof.\\  

	We start with a word in $RTL$ form, append a generator, $x_i$, to the end of the word, and then try to get it back into $RTL$ form.  We apply the relations of $M_n$ to first get $RTLx_i$ into the form $P_1TP_2$ (which we call $PTP$ form), where $P_1,P_2 \in P_n$, and $T \in TL_n$.  From this $PTP$ form, we then get to Minimal $RTL$ form, which is done by taking a diagram in $PTP$ form, and moving all the empty vertices to the right.  This is done using five lemmas (hop, burrow, slide, wallslide, and fuse wire), and also putting an order on $P_n$.  From here, we concern ourselves with the case that the edges in $T$ in minimal $RTL$ form have endpoints that are empty vertices of $R$ or $L$.  We show that we can put words of this form into standard $RTL$ form (where standard $RTL$ form is as defined in section 4.3.1).\\

	The preceding inductive proof tells us that the number of distinct words is equal to the number of standard words, and thus equal to the number of diagrams in $M_n$, giving us a presentation of $M_n$.

\vspace{10mm}

We define the \textit{Rook Monoid}, denoted $R_n$, as the set of one-to-one functions from a subset of $\{1, 2, . . . , n\}$ to a subset of $\{1, 2, . . . , n\}$.  These functions can be written as diagrams: a graph on two rows of $n$ vertices, each labeled from $1$ to $n$  from left to right.  On the graph we connect vertices on the top line to vertices on the bottom line.  Take for example the following diagram in $R_5$:\\
\begin{center}
$d={\beginpicture
\setcoordinatesystem units <0.3cm,0.15cm>         
\setplotarea x from 1 to 6, y from 0 to 3    
\linethickness=0.3pt                          
\put{$\bullet$} at 1 -1 \put{$\bullet$} at 1 2 
\put{$\bullet$} at 2 -1 \put{$\bullet$} at 2 2
\put{$\bullet$} at 3 -1 \put{$\bullet$} at 3 2
\put{$\bullet$} at 4 -1 \put{$\bullet$} at 4 2
\put{$\bullet$} at 5 -1 \put{$\bullet$} at 5 2

\plot 1 -1 1 2 / 
\plot 3 -1 2 2 /
\plot 5 -1 3 2 /
\plot 2 -1 4 2 /
\endpicture}$
\end{center}

The Rook Monoid is a \textit{monoid} under the operation of function composition, where the functions go from the top row of vertices to the bottom.  To perform function composition with two diagrams $d_1$, $d_2$, we place $d_1$ above $d_2$ and identify the vertices in the bottom row of $d_1$ with the corresponding vertices in the top row of $d_2$.  In terms of functions, if we consider $d_1$ as the function $g$, and $d_2$ as the function $f$, then the product $d_1d_2$\ is thought of as the composition $fg$, where if $x$ is in the range of $g$, and in the domain of $f$, we get that the pre-image of $x$, say $y$, in $g$ is in the domain of $fg$, and the image of $x$, say $z$, in $f$ is in the range of $fg$.  In particular, $y$ is in the domain of $fg$ if and only if there exists $x$ in the domain of $f$ such that $g(y)=x$.  This $y$ then corresponds to the value $z$ in the range of $fg$ such that $f^{-1}(z)=x$.  Note that in the diagram form we then draw an edge from $y$ on top to $z$ on bottom.

For example,\\ 

\begin{center}
if $d_1$=
{\beginpicture
\setcoordinatesystem units <0.3cm,0.15cm>         
\setplotarea x from 1 to 6, y from 0 to 3    
\linethickness=0.3pt                          
\put{$\bullet$} at 1 -1 \put{$\bullet$} at 1 2 
\put{$\bullet$} at 2 -1 \put{$\bullet$} at 2 2
\put{$\bullet$} at 3 -1 \put{$\bullet$} at 3 2
\put{$\bullet$} at 4 -1 \put{$\bullet$} at 4 2
\put{$\bullet$} at 5 -1 \put{$\bullet$} at 5 2

\plot 1 -1 1 2 / 
\plot 3 -1 2 2 /
\plot 5 -1 3 2 /
\plot 2 -1 4 2 /
\plot 4 -1 5 2 /
\endpicture}
and $d_2$=
{\beginpicture
\setcoordinatesystem units <0.3cm,0.15cm>         
\setplotarea x from 1 to 6, y from 0 to 3    
\linethickness=0.3pt                          
\put{$\bullet$} at 1 -1 \put{$\bullet$} at 1 2 
\put{$\bullet$} at 2 -1 \put{$\bullet$} at 2 2
\put{$\bullet$} at 3 -1 \put{$\bullet$} at 3 2
\put{$\bullet$} at 4 -1 \put{$\bullet$} at 4 2
\put{$\bullet$} at 5 -1 \put{$\bullet$} at 5 2

\plot 1 -1 1 2 / 
\plot 2 -1 3 2 /
\plot 3 -1 5 2 /
\plot 4 -1 4 2 /
\endpicture} 
{\beginpicture
\setcoordinatesystem units <0.3cm,0.15cm>         
\setplotarea x from 1 to 12, y from 0 to 3    
\linethickness=0.3pt                          
\put{$\bullet$} at 1 -3.5 \put{$\bullet$} at 1 -.5  \put{$\bullet$} at 1 1 \put{$\bullet$} at 1 4 
\put{$\bullet$} at 2 -3.5 \put{$\bullet$} at 2 -.5 \put{$\bullet$} at 2 1 \put{$\bullet$} at 2 4
\put{$\bullet$} at 3 -3.5 \put{$\bullet$} at 3 -.5 \put{$\bullet$} at 3 1 \put{$\bullet$} at 3 4
\put{$\bullet$} at 4 -3.5 \put{$\bullet$} at 4  -.5 \put{$\bullet$} at 4 1 \put{$\bullet$} at 4 4
\put{$\bullet$} at 5 -3.5 \put{$\bullet$} at 5 -.5 \put{$\bullet$} at 5 1 \put{$\bullet$} at 5 4
\put{then $d_1 d_2$ =} at -4 .5
\put{$\bullet$} at 8 2  \put{$\bullet$} at 8 -1
\put{$\bullet$} at 9 2  \put{$\bullet$} at 9 -1
\put{$\bullet$} at 10 2 \put{$\bullet$} at 10 -1
\put{$\bullet$} at 11 2  \put{$\bullet$} at 11 -1
\put{$\bullet$} at 12 2 \put{$\bullet$} at 12 -1
\put{$=$} at 6.5 .5

\plot 1 1 1 4 / 
\plot 2 1 4 4 /
\plot 2 4 3 1 /
\plot 3 4 5 1 /
\plot 4 4 2 1 /
\plot 5 4 4 1 /

\plot 1 -.5 1 -3.5 /
\plot 2 -3.5 3 -.5 /
\plot 3 -3.5 5 -.5 /
\plot 4 -3.5 4 -.5 /

\plot 8 2 8 -1 /
\plot 9 2 9 -1 /
\plot 10 2 10 -1 /
\plot 12 2 11 -1 /
\endpicture}
\end{center}

Another way to think about the Rook Monoid, $R_n$, is as the set of $n \times n$ matrices which have entries in the set $\{0,1\}$, with the property that there is at most one $1$ in each row and each column.  Take for example, $R_2$, which consists of the matrices:

\[
 \begin{bmatrix}
  0 & 0 \\
  0 & 0 \\
 \end{bmatrix},
 \begin{bmatrix}
  1 & 0 \\
  0 & 0 \\
 \end{bmatrix},
  \begin{bmatrix}
  0 & 0 \\
  1 & 0 \\
 \end{bmatrix},
  \begin{bmatrix}
  0 & 1 \\
  0 & 0 \\
 \end{bmatrix},
  \begin{bmatrix}
  0 & 0 \\
  0 & 1 \\
 \end{bmatrix},
  \begin{bmatrix}
  1 & 0 \\
  0 & 1 \\
 \end{bmatrix}, \textrm{and}
  \begin{bmatrix}
  0 & 1 \\
  1 & 0 \\
 \end{bmatrix}
\]

Note that the elements in $R_n$ are in a one-to-one correspondence with the possible ways in which one can place non-attacking rooks on a $n \times n$ chess board.  We define the \textit{rank} of a diagram as the number of edges it has, or the number of $1$s  in the corresponding matrix.\\

If we desire a rook matrix of rank $k$, we choose $k$ columns and $k$ rows in a total of $\binom{n}{k}^2$ ways.  We then can choose to place the $1$s in the $k$ chosen rows/columns in a total of $k!$ ways (such that no row or column has two or more nonzero entries).  Summing over all the possible ranks up to and including $n$ gives rise to the order of $R_n$:
$$|R_n| = \sum\limits_{k=0}^n\binom{n}{k}^2 k!$$

The relationship between diagrams and matrices is given in a very natural way.  We connect the vertex in the $i^{\textrm{th}}$ position in the top row of a diagram to the $j^{\textrm{th}}$ position in the bottom row if and only if the corresponding matrix has a 1 in the ($i$,$j$)-position.  Take for example the following matrix-diagram correspondence in $R_5$:
$$
{\beginpicture
\setcoordinatesystem units <0.3cm,0.15cm>         
\setplotarea x from 1 to 4, y from 0 to 3    
\linethickness=0.3pt                          
\put{$\bullet$} at 1 -1 \put{$\bullet$} at 1 2 
\put{$\bullet$} at 2 -1 \put{$\bullet$} at 2 2
\put{$\bullet$} at 3 -1 \put{$\bullet$} at 3 2
\put{$\bullet$} at 4 -1 \put{$\bullet$} at 4 2
\put{$\bullet$} at 5 -1 \put{$\bullet$} at 5 2
\put{$\leftrightarrow$} at 7 0.5
\plot 1 -1 1 2  /
\plot 2 -1 2 2 /
\plot 3 -1 5 2 /
\plot 4 2 5 -1 /
\endpicture}
  \begin{bmatrix}
  1 & 0 & 0 & 0 & 0\\
  0 & 1 & 0 & 0 & 0\\
  0 & 0 & 0 & 0 & 0\\
  0 & 0 & 0 & 0 & 1\\
  0 & 0 & 1 & 0 & 0\\
 \end{bmatrix}
$$

Note that matrix multiplication is equivalent to diagram multiplication by stacking:
$$
{\beginpicture
\setcoordinatesystem units <0.3cm,0.15cm>         
\setplotarea x from 1 to 12, y from 0 to 3    
\linethickness=0.3pt                          
\put{$\bullet$} at 1 -3.5 \put{$\bullet$} at 1 -.5  \put{$\bullet$} at 1 1 \put{$\bullet$} at 1 4 
\put{$\bullet$} at 2 -3.5 \put{$\bullet$} at 2 -.5 \put{$\bullet$} at 2 1 \put{$\bullet$} at 2 4
\put{$\bullet$} at 3 -3.5 \put{$\bullet$} at 3 -.5 \put{$\bullet$} at 3 1 \put{$\bullet$} at 3 4
\put{$\bullet$} at 4 -3.5 \put{$\bullet$} at 4  -.5 \put{$\bullet$} at 4 1 \put{$\bullet$} at 4 4
\put{$\bullet$} at 5 -3.5 \put{$\bullet$} at 5 -.5 \put{$\bullet$} at 5 1 \put{$\bullet$} at 5 4
\put{$d_1=$} at -1 2.5
\put{$d_2=$} at -1 -1.5
\put{$=d_1d_2$} at 15 .5
\put{$\bullet$} at 8 2  \put{$\bullet$} at 8 -1
\put{$\bullet$} at 9 2  \put{$\bullet$} at 9 -1
\put{$\bullet$} at 10 2 \put{$\bullet$} at 10 -1
\put{$\bullet$} at 11 2  \put{$\bullet$} at 11 -1
\put{$\bullet$} at 12 2 \put{$\bullet$} at 12 -1
\put{$=$} at 6.5 .5

\plot 1 1 1 4 / 
\plot 2 1 4 4 /
\plot 2 4 3 1 /
\plot 3 4 5 1 /
\plot 4 4 2 1 /
\plot 5 4 4 1 /

\plot 1 -.5 1 -3.5 /
\plot 2 -3.5 3 -.5 /
\plot 3 -3.5 5 -.5 /
\plot 4 -3.5 4 -.5 /

\plot 8 2 8 -1 /
\plot 9 2 9 -1 /
\plot 10 2 10 -1 /
\plot 12 2 11 -1 /
\endpicture}
$$
\begin{center}
is equivalent to
\end{center}
$$
  \begin{bmatrix}
  1 & 0 & 0 & 0 & 0\\
  0 & 0 & 1 & 0 & 0\\
  0 & 0 & 0 & 0 & 1\\
  0 & 1 & 0 & 0 & 0\\
  0 & 0 & 0 & 1 & 0\\
 \end{bmatrix}
   \begin{bmatrix}
  1 & 0 & 0 & 0 & 0\\
  0 & 0 & 0 & 0 & 0\\
  0 & 1 & 0 & 0 & 0\\
  0 & 0 & 0 & 1 & 0\\
  0 & 0 & 1 & 0 & 0\\
 \end{bmatrix}
 =
   \begin{bmatrix}
  1 & 0 & 0 & 0 & 0\\
  0 & 1 & 0 & 0 & 0\\
  0 & 0 & 1 & 0 & 0\\
  0 & 0 & 0 & 0 & 0\\
  0 & 0 & 0 & 1 & 0\\
 \end{bmatrix}
$$
\vspace{5mm}

A presentation of the rook monoid is provided on page 339 in \cite{rook}.
\vspace{10mm}

The \textit{Planar Rook Monoid}, $P_n$, is the set of order-preserving one-to-one functions from a subset of $\{1, 2, \dots , n\}$ to a subset of $\{1, 2, \dots , n\}$.   These order-preserving functions correspond to those diagrams that can be drawn with edges that do not cross.  For example, the set $P_2$ consists of the following diagrams:
\indent
\begin{center}
$P_2$\ =\ 
${\beginpicture
\setcoordinatesystem units <0.3cm,0.15cm>         
\setplotarea x from 1 to 4, y from 0 to 3    
\linethickness=0.3pt                          
\put{$\bullet$} at 1 -1 \put{$\bullet$} at 1 2
\put{$\bullet$} at 2  -1 \put{$\bullet$} at 2 2
\plot 1 -1 1 2  /
\plot 2 -1 2 2 /
\endpicture} $
${\beginpicture
\setcoordinatesystem units <0.3cm,0.15cm>         
\setplotarea x from 1 to 4, y from 0 to 3    
\linethickness=0.3pt                          
\put{$\bullet$} at 1 -1 \put{$\bullet$} at 1 2
\put{$\bullet$} at 2  -1 \put{$\bullet$} at 2 2
\plot 1 -1 1 2  /
\endpicture} $
${\beginpicture
\setcoordinatesystem units <0.3cm,0.15cm>         
\setplotarea x from 1 to 4, y from 0 to 3    
\linethickness=0.3pt                          
\put{$\bullet$} at 1 -1 \put{$\bullet$} at 1 2
\put{$\bullet$} at 2  -1 \put{$\bullet$} at 2 2
\plot 2 -1 2 2 /
\endpicture} $
${\beginpicture
\setcoordinatesystem units <0.3cm,0.15cm>         
\setplotarea x from 1 to 4, y from 0 to 3    
\linethickness=0.3pt                          
\put{$\bullet$} at 1 -1 \put{$\bullet$} at 1 2
\put{$\bullet$} at 2  -1 \put{$\bullet$} at 2 2
\plot 1 -1 2 2  /
\endpicture} $
${\beginpicture
\setcoordinatesystem units <0.3cm,0.15cm>         
\setplotarea x from 1 to 4, y from 0 to 3    
\linethickness=0.3pt                          
\put{$\bullet$} at 1 -1 \put{$\bullet$} at 1 2
\put{$\bullet$} at 2  -1 \put{$\bullet$} at 2 2
\plot 2 -1 1 2  /
\endpicture} $
${\beginpicture
\setcoordinatesystem units <0.3cm,0.15cm>         
\setplotarea x from 1 to 4, y from 0 to 3    
\linethickness=0.3pt                          
\put{$\bullet$} at 1 -1 \put{$\bullet$} at 1 2
\put{$\bullet$} at 2  -1 \put{$\bullet$} at 2 2
\endpicture}$
\end{center}

For a diagram $d$, we define 
$\tau(d)$ and $\beta(d)$ to be the sets containing the indices of the vertices of $d$ which are incident to an edge on top and on bottom respectively.  For example,

\begin{center} if $d$\ =\ 
${\beginpicture
\setcoordinatesystem units <0.3cm,0.15cm>         
\setplotarea x from 1 to 6, y from 0 to 3    
\linethickness=0.3pt                          
\put{$\bullet$} at 1 -1 \put{$\bullet$} at 1 2 
\put{$\bullet$} at 2 -1 \put{$\bullet$} at 2 2
\put{$\bullet$} at 3 -1 \put{$\bullet$} at 3 2
\put{$\bullet$} at 4 -1 \put{$\bullet$} at 4 2
\put{$\bullet$} at 5 -1 \put{$\bullet$} at 5 2

\plot 1 -1 1 2 / 
\plot 3 -1 2 2 /
\plot 5 -1 3 2 /
\plot 4 -1 5 2 /
\endpicture} $,
then $\tau$($d$) = \{1,2,3,5\} and $\beta$($d$) = \{$1'$,$3'$,$4'$,$5'$\} \\
\end{center}

\noindent where we label the top vertices from 1 to $n$ and the bottom vertices from $1'$ to $n'$.  For a planar rook diagram $d$, there is only one way to connect the vertices by edges, thus these sets $\tau$($d$) and $\beta$($d$) uniquely determine $d$.\\

Notice that the product of two planar rook diagrams is planar (seen easily through diagram multiplication), thus $P_n$ is a submonoid of $R_n$.   To obtain the order of $P_n$, as in $R_n$, we choose $k$ columns, and $k$ rows in $\binom{n}{k}^2$, however as stated above this choosing determines a unique diagram, thus the total number of planar rook diagrams is:
$$|P_n| = \sum\limits_{k=0}^n\binom{n}{k}^2 $$
\subsection{Generators and Relations of the Planar Rook Monoid}
\vspace{5mm}

 Let $l_i$ be the element of $P_n$ such that $\tau(l_i)=[n]\setminus\{i+1\}$ and $\beta(l_i)=[n]\setminus\{i\}=\{1,2,\ldots,i-1,i+i,\ldots,n\}$, as shown below:

\begin{center}
${\beginpicture
\setcoordinatesystem units <0.3cm,0.15cm>
\setplotarea x from 1 to 3.8, y from 0 to 3
\linethickness=0.3pt
\put{$\bullet$} at 2 -1 \put{$\bullet$} at 2 2
\put{$\bullet$} at 5 -1 \put{$\bullet$} at 5 2
\put{$\bullet$} at 6 -1 \put{$\bullet$} at 6 2
\put{$\bullet$} at 7 -1 \put{$\bullet$} at 7 2
\put{$\bullet$} at 8 -1 \put{$\bullet$} at 8 2

\put{$\bullet$} at 11 -1 \put{$\bullet$} at 11 2
\put{$\cdots$} at 3.625 2 \put{$\cdots$} at 3.625 -1
\put{$\cdots$} at 9.625 2
\put{$\cdots$} at 9.625 -1

\plot 2 2 2 -1 /
\plot 5 2 5 -1 /
\plot 8 2 8 -1 /
\plot 11 -1 11 2 /
\plot 6 2 7 -1 /

\put{$\scriptstyle{i\,}$} at 6 3.75
\put{$l_i =$} at 0 .5
\endpicture}$
\end{center}

\noindent Let $r_i$ be the element of $P_n$ such that $\tau(r_i)=[n]\setminus\{i\}$ and $\beta(r_i)=[n]\setminus\{i+1\}$ as shown below:
\begin{center}
${\beginpicture
\setcoordinatesystem units <0.3cm,0.15cm>
\setplotarea x from 1 to 3.8, y from 0 to 3
\linethickness=0.3pt
\put{$\bullet$} at 2 -1 \put{$\bullet$} at 2 2
\put{$\bullet$} at 5 -1 \put{$\bullet$} at 5 2
\put{$\bullet$} at 6 -1 \put{$\bullet$} at 6 2
\put{$\bullet$} at 7 -1 \put{$\bullet$} at 7 2
\put{$\bullet$} at 8 -1 \put{$\bullet$} at 8 2

\put{$\bullet$} at 11 -1 \put{$\bullet$} at 11 2
\put{$\cdots$} at 3.625 2 \put{$\cdots$} at 3.625 -1
\put{$\cdots$} at 9.625 2
\put{$\cdots$} at 9.625 -1

\plot 2 2 2 -1 /
\plot 5 2 5 -1 /
\plot 8 2 8 -1 /
\plot 6 -1 7 2 /
\plot 11 2 11 -1 /

\put{$\scriptstyle{i\,}$} at 6 3.75
\put{$r_i =$} at 0 .5
\endpicture}$
\end{center}

\noindent As proven in \cite{Kathy}, every planar rook diagram can be written as a product of $l_i$'s and $r_i$'s.

The following relations hold for all $i$ such that the terms in the relation are defined:

\begin{enumerate}
\item $l_i^3=l_i^2=r_i^2=r_i^3$
\item a) $l_il_{i+1}l_i=l_il_{i+1}=l_{i+1}l_il_{i+1}$\\
b) $r_ir_{i+1}r_i=r_{i+1}r_i=r_{i+1}r_ir_{i+1}$
\item a) $r_il_ir_i=r_i$\\
b) $l_ir_il_i=l_i$
\item a) $l_{i+1}r_il_i=l_{i+1}r_i$\\
b) $r_{i-1}l_ir_i=r_{i-1}l_i$\\
c) $r_il_ir_{i+1}=l_ir_{i+1}$\\
d) $l_ir_il_{i-1}=r_il_{i-1}$
\item $l_ir_i=l_{i+1}r_{i+1}$
\item If $|i-j|\geq 2$, then $r_il_j=l_jr_i, r_ir_j=r_jr_i, l_il_j=l_jl_i$
\end{enumerate}
These relations can easily be verified by drawing the diagram products they refer to. Furthermore, as proven in \cite{Kathy}, these relations suffice to completely characterize $P_n$.

\section{Right Planar Rook Monoid}
We define the \textit{Right Planar Rook Monoid} to be the submonoid of the Planar Rook Monoid which contains all the diagrams with the property that the top vertex of each edge is directly above or above and to the right of the bottom vertex.  Similarly, the \textit{Left Planar Rook Monoid} is the submonoid where the top vertex of each edge is above or above and to the left of the bottom vertex.  We denote these monoids $RP_n$ and $LP_n$ respectively.  For example, 

\begin{center}
$d_1={\beginpicture
\setcoordinatesystem units <0.3cm,0.15cm>         
\setplotarea x from 1 to 6, y from 0 to 3    
\linethickness=0.3pt                          
\put{$\bullet$} at 1 -1 \put{$\bullet$} at 1 2 
\put{$\bullet$} at 2 -1 \put{$\bullet$} at 2 2
\put{$\bullet$} at 3 -1 \put{$\bullet$} at 3 2
\put{$\bullet$} at 4 -1 \put{$\bullet$} at 4 2
\put{$\bullet$} at 5 -1 \put{$\bullet$} at 5 2

\plot 1 -1 1 2 / 
\plot 2 -1 4 2 /
\plot 4 -1 5 2 /
\endpicture}$
and $d_2={\beginpicture
\setcoordinatesystem units <0.3cm,0.15cm>         
\setplotarea x from 1 to 6, y from 0 to 3    
\linethickness=0.3pt                          
\put{$\bullet$} at 1 -1 \put{$\bullet$} at 1 2 
\put{$\bullet$} at 2 -1 \put{$\bullet$} at 2 2
\put{$\bullet$} at 3 -1 \put{$\bullet$} at 3 2
\put{$\bullet$} at 4 -1 \put{$\bullet$} at 4 2
\put{$\bullet$} at 5 -1 \put{$\bullet$} at 5 2

\plot 1 -1 1 2 / 
\plot 3 -1 2 2 /
\plot 5 -1 3 2 /
\endpicture}$
\end{center}

\noindent are in $RP_5$ and $LP_5$ respectively.\\

 \noindent We will now derive some facts about $RP_n$.

\subsection{Cardinality}
First, we prove a closed form for the size of $RP_n$. Through numerical experimentation we find that for the first few values of $n$, we get:
\[
|RP_1| = 2, |RP_2| = 5, |RP_3| = 14, |RP_4| = 42, |RP_5|=132.
\]
$|RP_1|$ has diagrams: \\

${\beginpicture
\setcoordinatesystem units <0.2cm,0.2cm>         
\setplotarea x from 1 to 5, y from 0 to 3    
\linethickness=0.3pt                          
\put{$\bullet$} at 1 -1 \put{$\bullet$} at 1 1
\plot 1 -1 1 1  /
\endpicture} $ 
${\beginpicture
\setcoordinatesystem units <0.2cm,0.2cm>         
\setplotarea x from 1 to 5, y from 0 to 3    
\linethickness=0.3pt                          
\put{$\bullet$} at 1 -1 \put{$\bullet$} at 1 1
\endpicture} $ \\

\noindent $|RP_2|$ has diagrams:\\

${\beginpicture
\setcoordinatesystem units <0.3cm,0.15cm>         
\setplotarea x from 1 to 4, y from 0 to 3    
\linethickness=0.3pt                          
\put{$\bullet$} at 1 -1 \put{$\bullet$} at 1 2
\put{$\bullet$} at 2  -1 \put{$\bullet$} at 2 2
\plot 1 -1 1 2  /
\plot 2 -1 2 2 /
\endpicture} $ 
${\beginpicture
\setcoordinatesystem units <0.3cm,0.15cm>         
\setplotarea x from 1 to 4, y from 0 to 3    
\linethickness=0.3pt                          
\put{$\bullet$} at 1 -1 \put{$\bullet$} at 1 2 
\put{$\bullet$} at 2 -1 \put{$\bullet$} at 2 2
\endpicture} $
${\beginpicture
\setcoordinatesystem units <0.3cm,0.15cm>         
\setplotarea x from 1 to 4, y from 0 to 3    
\linethickness=0.3pt                          
\put{$\bullet$} at 1 -1 \put{$\bullet$} at 1 2 
\put{$\bullet$} at 2 -1 \put{$\bullet$} at 2 2
\plot 1 -1 2 2 / 
\endpicture} $
${\beginpicture
\setcoordinatesystem units <0.3cm,0.15cm>         
\setplotarea x from 1 to 4, y from 0 to 3    
\linethickness=0.3pt                          
\put{$\bullet$} at 1 -1 \put{$\bullet$} at 1 2 
\put{$\bullet$} at 2 -1 \put{$\bullet$} at 2 2
\plot 1 -1 1 2 / 
\endpicture} $
${\beginpicture
\setcoordinatesystem units <0.3cm,0.15cm>         
\setplotarea x from 1 to 4, y from 0 to 3    
\linethickness=0.3pt                          
\put{$\bullet$} at 1 -1 \put{$\bullet$} at 1 2 
\put{$\bullet$} at 2 -1 \put{$\bullet$} at 2 2
\plot 2 -1 2 2 / 
\endpicture} $\\

\noindent $|RP_3|$ has diagrams:\\

${\beginpicture
\setcoordinatesystem units <0.3cm,0.15cm>         
\setplotarea x from 1 to 3.8, y from 0 to 3    
\linethickness=0.3pt                          
\put{$\bullet$} at 1 -1 \put{$\bullet$} at 1 2 
\put{$\bullet$} at 2 -1 \put{$\bullet$} at 2 2
\put{$\bullet$} at 3 -1 \put{$\bullet$} at 3 2
\endpicture} $
${\beginpicture
\setcoordinatesystem units <0.3cm,0.15cm>         
\setplotarea x from 1 to 3.8, y from 0 to 3    
\linethickness=0.3pt                          
\put{$\bullet$} at 1 -1 \put{$\bullet$} at 1 2 
\put{$\bullet$} at 2 -1 \put{$\bullet$} at 2 2
\put{$\bullet$} at 3 -1 \put{$\bullet$} at 3 2
\plot 1 -1 1 2 / 
\plot 2 -1 2 2 / 
\plot 3 -1 3 2 / 
\endpicture} $
${\beginpicture
\setcoordinatesystem units <0.3cm,0.15cm>         
\setplotarea x from 1 to 3.8, y from 0 to 3    
\linethickness=0.3pt                          
\put{$\bullet$} at 1 -1 \put{$\bullet$} at 1 2 
\put{$\bullet$} at 2 -1 \put{$\bullet$} at 2 2
\put{$\bullet$} at 3 -1 \put{$\bullet$} at 3 2
\plot 1 -1 1 2 / 
\endpicture} $
${\beginpicture
\setcoordinatesystem units <0.3cm,0.15cm>         
\setplotarea x from 1 to 3.8, y from 0 to 3    
\linethickness=0.3pt                          
\put{$\bullet$} at 1 -1 \put{$\bullet$} at 1 2 
\put{$\bullet$} at 2 -1 \put{$\bullet$} at 2 2
\put{$\bullet$} at 3 -1 \put{$\bullet$} at 3 2
\plot 2 -1 2 2 / 
\endpicture} $
${\beginpicture
\setcoordinatesystem units <0.3cm,0.15cm>         
\setplotarea x from 1 to 3.8, y from 0 to 3    
\linethickness=0.3pt                          
\put{$\bullet$} at 1 -1 \put{$\bullet$} at 1 2 
\put{$\bullet$} at 2 -1 \put{$\bullet$} at 2 2
\put{$\bullet$} at 3 -1 \put{$\bullet$} at 3 2
\plot 3 -1 3 2 / 
\endpicture} $
${\beginpicture
\setcoordinatesystem units <0.3cm,0.15cm>         
\setplotarea x from 1 to 3.8, y from 0 to 3    
\linethickness=0.3pt                          
\put{$\bullet$} at 1 -1 \put{$\bullet$} at 1 2 
\put{$\bullet$} at 2 -1 \put{$\bullet$} at 2 2
\put{$\bullet$} at 3 -1 \put{$\bullet$} at 3 2
\plot 1 -1 2 2 / 
\endpicture} $
${\beginpicture
\setcoordinatesystem units <0.3cm,0.15cm>         
\setplotarea x from 1 to 3.8, y from 0 to 3    
\linethickness=0.3pt                          
\put{$\bullet$} at 1 -1 \put{$\bullet$} at 1 2 
\put{$\bullet$} at 2 -1 \put{$\bullet$} at 2 2
\put{$\bullet$} at 3 -1 \put{$\bullet$} at 3 2
\plot 1 -1 2 2 / 
\plot 3 -1 3 2 / 
\endpicture} $
${\beginpicture
\setcoordinatesystem units <0.3cm,0.15cm>         
\setplotarea x from 1 to 3.8, y from 0 to 3    
\linethickness=0.3pt                          
\put{$\bullet$} at 1 -1 \put{$\bullet$} at 1 2 
\put{$\bullet$} at 2 -1 \put{$\bullet$} at 2 2
\put{$\bullet$} at 3 -1 \put{$\bullet$} at 3 2
\plot 1 -1 1 2 / 
\plot 2 -1 3 2 /
\endpicture} $
${\beginpicture
\setcoordinatesystem units <0.3cm,0.15cm>         
\setplotarea x from 1 to 3.8, y from 0 to 3    
\linethickness=0.3pt                          
\put{$\bullet$} at 1 -1 \put{$\bullet$} at 1 2 
\put{$\bullet$} at 2 -1 \put{$\bullet$} at 2 2
\put{$\bullet$} at 3 -1 \put{$\bullet$} at 3 2
\plot 2 -1 3 2 / 
\endpicture} $
${\beginpicture
\setcoordinatesystem units <0.3cm,0.15cm>         
\setplotarea x from 1 to 3.8, y from 0 to 3    
\linethickness=0.3pt                          
\put{$\bullet$} at 1 -1 \put{$\bullet$} at 1 2 
\put{$\bullet$} at 2 -1 \put{$\bullet$} at 2 2
\put{$\bullet$} at 3 -1 \put{$\bullet$} at 3 2
\plot 1 -1 3 2 / 
\endpicture} $
${\beginpicture
\setcoordinatesystem units <0.3cm,0.15cm>         
\setplotarea x from 1 to 3.8, y from 0 to 3    
\linethickness=0.3pt                          
\put{$\bullet$} at 1 -1 \put{$\bullet$} at 1 2 
\put{$\bullet$} at 2 -1 \put{$\bullet$} at 2 2
\put{$\bullet$} at 3 -1 \put{$\bullet$} at 3 2
\plot 1 -1 2 2 / 
\plot 2 -1 3 2 /
\endpicture} $
${\beginpicture
\setcoordinatesystem units <0.3cm,0.15cm>         
\setplotarea x from 1 to 3.8, y from 0 to 3    
\linethickness=0.3pt                          
\put{$\bullet$} at 1 -1 \put{$\bullet$} at 1 2 
\put{$\bullet$} at 2 -1 \put{$\bullet$} at 2 2
\put{$\bullet$} at 3 -1 \put{$\bullet$} at 3 2
\plot 2 -1 2 2 / 
\plot 1 -1 1 2 /
\endpicture} $
${\beginpicture
\setcoordinatesystem units <0.3cm,0.15cm>         
\setplotarea x from 1 to 3.8, y from 0 to 3    
\linethickness=0.3pt                          
\put{$\bullet$} at 1 -1 \put{$\bullet$} at 1 2 
\put{$\bullet$} at 2 -1 \put{$\bullet$} at 2 2
\put{$\bullet$} at 3 -1 \put{$\bullet$} at 3 2
\plot 2 -1 2 2 / 
\plot 3 -1 3 2 /
\endpicture} $
${\beginpicture
\setcoordinatesystem units <0.3cm,0.15cm>         
\setplotarea x from 1 to 3.8, y from 0 to 3    
\linethickness=0.3pt                          
\put{$\bullet$} at 1 -1 \put{$\bullet$} at 1 2 
\put{$\bullet$} at 2 -1 \put{$\bullet$} at 2 2
\put{$\bullet$} at 3 -1 \put{$\bullet$} at 3 2
\plot 1 -1 1 2 / 
\plot 3 -1 3 2 /
\endpicture} $ \\

\noindent {We notice that these cardinalities are Catalan numbers, and indeed we can prove that this pattern continues:
\begin{theorem}
$|RP_n| = C_{n+1} = \frac{\binom{2n+2}{n+1}}{n+2}$ 
\end{theorem}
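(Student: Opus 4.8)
The plan is to strip $RP_n$ down to a purely combinatorial model, recode that model as a family of lattice paths, and count the paths with a generating function. First I would record a reformulation that is immediate from the definitions. A planar rook diagram $d$ is determined by the pair $(\tau(d),\beta(d))$, and for $d\in RP_n$ the unique order-preserving edge set runs each bottom vertex to a top vertex at least as far to the right. Hence $RP_n$ is in bijection with the set of pairs $(S,T)$ with $S,T\subseteq[n]$, $|S|=|T|=k$, such that, writing $S=\{s_1<\cdots<s_k\}$ and $T=\{t_1<\cdots<t_k\}$, one has $s_j\ge t_j$ for all $j$. I would then prove the short lemma that this coordinatewise domination is equivalent to the ballot condition $|S\cap[i]|\le|T\cap[i]|$ for every $i\in[n]$: if $i=s_m$ then $|S\cap[i]|=m$, so the ballot condition forces $|T\cap[i]|\ge m$, i.e.\ $t_m\le s_m$; conversely, if all $t_m\le s_m$ then for any $i$, letting $r=|S\cap[i]|$ gives $t_r\le s_r\le i$, so $|T\cap[i]|\ge r$.

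Next I would encode each pair $(S,T)$ satisfying the ballot condition as a lattice path of length $n$: reading positions $i=1,\dots,n$ in turn, emit an up step if $i\in T\setminus S$, a down step if $i\in S\setminus T$, a level step of color $A$ if $i\in S\cap T$, and a level step of color $B$ if $i\notin S\cup T$. With $h_i=|T\cap[i]|-|S\cap[i]|$ the path has height $h_i$ after $i$ steps, so the ballot condition is precisely "the path never goes below the axis" and $|S|=|T|$ is "the path ends on the axis"; this assignment is reversible, so $RP_n$ is in bijection with the set $\mathcal M_n$ of Motzkin paths of length $n$ whose level steps are $2$-colored. (As a check, for $n=2$ the five such paths $BB$, $AB$, $BA$, $AA$, $UD$ correspond to the five diagrams of $RP_2$ displayed above.)

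Finally I would count $\mathcal M_n$. Let $F(x)=\sum_{n\ge0}|\mathcal M_n|\,x^n$. The first-return decomposition (the path is empty; or a $2$-colored level step followed by a shorter path; or an up step, a path shifted up by one and returning to the axis, the matching down step, and a further path) yields $F=1+2xF+x^2F^2$, and this relation together with $F(0)=1$ determines $F$ uniquely as a formal power series. Writing $C(x)=\sum_{n\ge0}C_nx^n$ for the Catalan generating function, which satisfies $C=1+xC^2$, one checks that $G(x):=C(x)^2$ also satisfies $G=1+2xG+x^2G^2$ with $G(0)=1$, since $xG=xC^2=C-1$ gives $1+2xG+x^2G^2=1+2(C-1)+(C-1)^2=C^2=G$. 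Therefore $F=C^2=(C-1)/x=\sum_{n\ge0}C_{n+1}x^n$, so $|RP_n|=|\mathcal M_n|=C_{n+1}$, and the closed form $C_{n+1}=\binom{2n+2}{n+1}/(n+2)$ is the standard one. The one place to take care is the chain of bijections in the first two paragraphs: the right-planar condition and its equivalence with the ballot condition must be handled precisely, and one must verify the lattice-path encoding really is a bijection onto $\mathcal M_n$. After that the generating-function computation is routine.
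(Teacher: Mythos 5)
Your proof is correct, but it takes a genuinely different route from the one in the paper. The paper encodes a diagram directly as the chain of edge-pairs $(a_1,b_1)\leq\cdots\leq(a_k,b_k)$ together with a sentinel pair $(n+1,n+1)$, and reads off a ballot sequence of $n+1$ ones and $n+1$ minus-ones by emitting the successive differences $a_i-a_{i-1}$ and $b_i-b_{i-1}$; the right-planar condition $a_i\geq b_i$ guarantees nonnegative partial sums, and the known count of such sequences gives $C_{n+1}$ in one step. You instead pass through two intermediate reductions: first the clean lemma that coordinatewise domination $s_j\geq t_j$ is equivalent to the ballot condition $|S\cap[i]|\leq|T\cap[i]|$ for all $i$, and then a position-by-position (rather than edge-by-edge) encoding of the diagram as a bicolored Motzkin path of length $n$, which you count by the first-return decomposition $F=1+2xF+x^2F^2$ and the identity $F=C^2=(C-1)/x$. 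The paper's argument is shorter and purely bijective, but it outsources the counting to the standard enumeration of ballot sequences and its inverse map requires a little care with run-lengths; your argument is longer but essentially self-contained (modulo the closed form for $C_{n+1}$), the encoding is local and manifestly reversible, and the generating-function step makes the appearance of the \emph{shifted} Catalan number transparent. Both are valid; the two encodings are in fact related by the classical bijection between bicolored Motzkin paths of length $n$ and Dyck paths of length $2n+2$.
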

\begin{proof}
We begin by defining a new way to encode any diagram $d \in RP_n$. To do this we create ordered pairs of the form $(i,j)$ for each edge in $d$ connecting the $i$th vertex on top to the $j$th vertex on bottom. We then gather all such pairs into a set along with the pair $(n+1,n+1)$. We impose a well ordering on this set given by $(a,b) \leq (c,d) \iff a \leq c$. For example, the diagram given by:
\begin{center}
$d=
{\beginpicture
\setcoordinatesystem units <0.3cm,0.15cm>         
\setplotarea x from 1 to 3.8, y from 0 to 3    
\linethickness=0.3pt                          
\put{$\bullet$} at 1 -1 \put{$\bullet$} at 1 2 
\put{$\bullet$} at 2 -1 \put{$\bullet$} at 2 2
\put{$\bullet$} at 3 -1 \put{$\bullet$} at 3 2
\plot 1 -1 2 2 / 
\plot 2 -1 3 2 /
\endpicture}$
\end{center}
yields the poset $\{ (2,1) \leq (3,2) \leq (4,4)\}$. 
Now we begin by exhibiting a bijection from $RP_n$ to the set of all $2(n+1)$-sequences of $\pm 1$s, with exactly $n+1$ of each, such that no partial sum is negative. The set of such sequences is known to  have cardinality $C_{n+1}$.\\

Define a function from $RP_n$ to the set is as follows: given any diagram $d = \{(a_1,b_1) \leq \ldots \leq (a_k,b_k)\}$, start our sequence with $a_1$ $1$s followed by $b_1$ $(-1)$s. Inductively, append to this sequence $a_{i} - a_{i-1}$ $1$s and $b_{i} - b_{i-1}$ $(-1)$s for $2 \leq i \leq k$. To see that the resulting sequence is indeed in our set, note that diagrams in $RP_n$ must consist of ordered pairs with their first component greater than or equal to their second. This, in combination with the fact that the first coordinate of every pair is appended first as $1$s, makes it impossible to have a negative partial sum.\\

To see that this function is a bijection, note that there is a natural inverse. Given any sequence, we create the first ordered pair by counting off all of the $1$s until the first $(-1)$, as well as the number of $(-1)$s up to the next $1$. These numbers will be the first and second coordinates of the first ordered pair, $(a_1,b_1)$. Inductively we count the $i$th set of $1$s and the $i$th set of $(-1)$s. These numbers would then be added to $a_{i-1}$ and $b_{i-1}$ to give an ordered pair $(a_i, b_i)$.  Therefore the cardinality of $RP_n$ is equal to $C_{n+1}$.
\end{proof}

\subsection{Generators}
Let $r_i$ be defined as in Subsection \ref{copying kathy}. Let $p_i$ be the element of $RP_n$ such that $\tau(p_i)=\beta(p_i)=[n]\setminus\{i\}$ as shown:
\begin{center}
${\beginpicture
\setcoordinatesystem units <0.3cm,0.15cm>
\setplotarea x from 1 to 3.8, y from 0 to 3
\linethickness=0.3pt
\put{$\bullet$} at 2 -1 \put{$\bullet$} at 2 2
\put{$\bullet$} at 5 -1 \put{$\bullet$} at 5 2
\put{$\bullet$} at 6 -1 \put{$\bullet$} at 6 2
\put{$\bullet$} at 7 -1 \put{$\bullet$} at 7 2
\put{$\bullet$} at 10 -1 \put{$\bullet$} at 10 2
\put{$\cdots$} at 3.625 2 \put{$\cdots$} at 3.625 -1
\put{$\cdots$} at 8.625 2
\put{$\cdots$} at 8.625 -1

\plot 2 2 2 -1 /
\plot 5 2 5 -1 /
\plot 7 2 7 -1 /
\plot 10 2 10 -1 /

\put{$\scriptstyle{i\,}$} at 6 3.75
\put{$p_i =$} at 0 .5
\endpicture}$
\end{center}
This yields the following theorem.

\begin{theorem}\label{generators}Every diagram of $RP_n$ can be written as a product of $r_i\ (1 \leq i < n)$ and $p_i\ (1 \leq i \leq n)$.
\end{theorem}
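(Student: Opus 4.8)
The plan is to give an explicit algorithm that rewrites an arbitrary $d\in RP_n$ as $p_T\cdot w$, where $p_T$ is a product of $p_i$'s and $w$ is a word in the $r_i$'s. List the edges of $d$ as ordered pairs $(a_1,b_1),\dots,(a_k,b_k)$ with $a_1<\dots<a_k$; since $d$ is planar we automatically get $b_1<\dots<b_k$, and since $d\in RP_n$ we have $a_j\ge b_j$ for all $j$. Put $T=\tau(d)=\{a_1,\dots,a_k\}$ and $p_T=\prod_{i\notin T}p_i$ (the $p_i$'s commute, so this is unambiguous); this is the partial-identity diagram whose only edges are the verticals $(a_j,a_j)$, and it is visibly a product of generators.

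First I would record the two elementary facts needed about right multiplication, both read off directly from the diagrams: for any diagram $e$, the product $e\,p_i$ is $e$ with the edge (if any) at bottom vertex $i$ deleted; and for $1\le i<n$, $e\,r_i$ is obtained from $e$ by deleting the edge at bottom vertex $i$ and then moving the edge at bottom vertex $i+1$ (if any) down to bottom vertex $i$. In particular, when bottom vertex $i$ of $e$ is unused, $e\,r_i$ just slides the edge at bottom vertex $i+1$ one step to the left and changes nothing else.

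Then I would transform $p_T$ into $d$ one edge at a time, processing the edges from left to right. The invariant I would carry is that after edges $1,\dots,j-1$ have been placed, the current diagram has its first $j-1$ edges at bottom vertices $b_1<\dots<b_{j-1}$ and its remaining edges still at $a_j<\dots<a_k$, and the bottom vertices $b_j,b_j+1,\dots,a_j-1$ are all unused --- the only occupied bottom vertices below $a_j$ are $b_1,\dots,b_{j-1}$, and these are all $<b_j$ by planarity. Consequently, right-multiplying by $r_{a_j-1}\,r_{a_j-2}\cdots r_{b_j}$ slides edge $j$ from bottom vertex $a_j$ down to bottom vertex $b_j$, deleting nothing and fixing every other edge; here one checks that each index satisfies $b_j\le i\le a_j-1<n$ (so it is a legal generator) and that $i,i+1$ miss all other occupied bottom vertices. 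After all $k$ steps the diagram equals $d$, giving
\[
 d=p_T\cdot\bigl(r_{a_1-1}\cdots r_{b_1}\bigr)\bigl(r_{a_2-1}\cdots r_{b_2}\bigr)\cdots\bigl(r_{a_k-1}\cdots r_{b_k}\bigr),
\]
a product of generators (a factor is empty when $a_j=b_j$, and the entire $r$-word is empty when $k=0$, in which case $d=p_1p_2\cdots p_n$).

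The one delicate point is the bookkeeping in this inductive step: verifying that sliding edge $j$ to the left never collides with an already-placed edge or with a not-yet-processed vertical. This is precisely where the two hypotheses on $d$ enter --- planarity gives $b_{j-1}<b_j$, which keeps the placed edges clear of the sliding path, and the defining condition $b_j\le a_j$ of $RP_n$ guarantees that we only ever need to move to the left, so that the $r_i$'s (and not the $l_i$'s, which are not generators here) suffice.
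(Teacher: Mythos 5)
Your proof is correct and is essentially the paper's: the $r$-word you build, $(r_{a_j-1}\cdots r_{b_j})$ for each edge taken left to right, is exactly the paper's $R_{b_j}^{a_j}$, the only cosmetic difference being that you place the $p$-block on the left (deleting top vertices not in $\tau(d)$) where the paper places $\prod_{i\notin\beta(d)}p_i$ on the right. Your edge-by-edge sliding invariant is a slightly more careful rendering of the same verification the paper does by tracing each node $b_i'$ through the stacked product.
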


\begin{proof}Suppose $d$ is a diagram in $RP_n$ with rank $k$.  Let $\tau(d)= \{a_1, \ldots, a_k\}$ with $a_1 < \ldots < a_k$ and $\beta(d)= \{b_1, \ldots, b_k\}$ with $b_1 < \ldots < b_k$. 
Define $R_{a}^{a}$ to be the identity diagram.
For $1\leq b<a \leq n$, define $R_{b}^{a}$ to be $$R_{b}^a= r_{a-1}\cdot r_{a-2}\dots r_{b+1}\cdot r_b.$$ Then $R_{b}^a$ is the diagram consisting of an edge from vertex $b'$ to $a$ and vertical edges connecting all vertices to the right of $a$ or to the left of $b'$.
Let, $$d'=\prod_{i=1}^{k} R_{b_i}^{a_i} \cdot \prod_{i\notin\beta(d)} p_i\, .$$
All that remains to be shown is that $d'=d$.  Consider a node $b_i' \in \beta(d)$.  All diagrams in $\prod_{i\notin\beta(d)} p_i$ have a vertical edge at $b_i'$.  So do the diagrams $R_{b_j}^{a_j}$ for $j=k, k-1, \ldots, i+1$.  Then $R_{b_i}^{a_i}$ connects nodes $b_i'$ and $a_i$.  Finally, $R_{b_j}^{a_j}$ has a vertical edge at $a_i$ for $j= i-1, i-2 \ldots 1$.  Thus $d'$ connects nodes $b_i'$ and $a_i$.  If $j' \notin \beta(d)$ then the product of $p_i$'s ensures $j'\notin \beta(d')$.  Thus $d=d'$ is a word in our generators.

\end{proof}
Note $p_i$ is not considered a generator in $P_n$, since $p_i=r_il_i$, for all $1\leq i < n$ and that $p_i=l_{i-1}r_{i-1}$ for all $1<i\leq n$.

\subsection{Relations}

\begin{theorem} \label{RPn relations} The Right Planar Rook Monoid is generated by $r_i$ and $p_i$ subject to the following relations:
\begin{enumerate}
\item $r_i r_{i+1} r_i = r_{i+1} r_i r_{i+1}$
\item $r_i^2 = r_i^3 = r_i p_i = p_i p_{i+1}$
\item $r_i = p_i r_i = r_i p_{i+1}$
\item $p_i r_{i+1} = r_{i+1} p_i$
\item $p_i^2 = p_i$
\item $p_i p_j = p_j p_i$
\item If $|i-j| \geq 2$ then $p_i r_j = r_j p_i, and\  r_i r_j = r_j r_i$
\end{enumerate}
\end{theorem}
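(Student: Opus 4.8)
The plan is to prove that the monoid $\mathcal{M}$ defined by the generators $r_i, p_i$ and relations (1)--(7) satisfies $|\mathcal{M}| \le C_{n+1}$. First, a direct diagram computation shows that each of (1)--(7) is a valid identity among the corresponding diagrams of $RP_n$, so by Theorem~\ref{generators} the assignment $r_i \mapsto r_i$, $p_i \mapsto p_i$ extends to a surjective monoid homomorphism $\phi\colon \mathcal{M} \twoheadrightarrow RP_n$. Since $|RP_n| = C_{n+1}$ by the cardinality formula established above, the bound $|\mathcal{M}| \le C_{n+1}$ forces $\phi$ to be an isomorphism, which is precisely the assertion that (1)--(7) present $RP_n$. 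So after the mechanical verification that the relations hold, everything comes down to bounding $|\mathcal{M}|$.

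For that I would fix a family of normal-form words and show that every word in the generators is equal, modulo (1)--(7), to one of them. Using the notation from the proof of Theorem~\ref{generators}, set $R_b^a = r_{a-1} r_{a-2} \cdots r_b$ for $1 \le b \le a \le n$ (with $R_a^a$ the empty word), and call a word a \emph{normal form} if it reads
$$R_{b_1}^{a_1} R_{b_2}^{a_2} \cdots R_{b_k}^{a_k}\, p_{c_1} p_{c_2} \cdots p_{c_{n-k}},$$
where $1 \le b_1 < \cdots < b_k \le n$, $1 \le a_1 < \cdots < a_k \le n$, $b_i \le a_i$ for all $i$, and $\{c_1 < \cdots < c_{n-k}\} = [n] \setminus \{b_1, \dots, b_k\}$. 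By the computation inside the proof of Theorem~\ref{generators}, $\phi$ carries this word to the diagram $d$ with $\tau(d) = \{a_i\}$ and $\beta(d) = \{b_i\}$, and each diagram of $RP_n$ is hit by exactly one normal-form word; hence there are exactly $|RP_n| = C_{n+1}$ normal-form words. The empty word is itself a normal form (take $k = n$, $a_i = b_i = i$), so by induction on word length it suffices to establish the closure property: for any normal-form word $\nu$ and any generator $x \in \{r_1,\dots,r_{n-1}\} \cup \{p_1,\dots,p_n\}$, the word $\nu x$ can be transformed, using only relations (1)--(7), into a normal-form word.

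To prove the closure property one computes the diagram $\phi(\nu)\phi(x)$, reads off the normal-form word $\nu'$ representing it, and produces a chain of relation-applications $\nu x \rightsquigarrow \nu'$. When $x = p_j$ the idea is to carry $p_j$ leftward through the blocks $R_{b_i}^{a_i}$: relation (7) commutes it past a block whose index range avoids $j$, relations (2)--(4) govern the interaction with a block whose range meets $j$ (absorbing $p_j$, or replacing some $r_m$ by $r_m^2 = p_m p_{m+1}$), and relations (5)--(6) then merge and resort the $p$'s at the tail. When $x = r_j$ one re-expands the affected block(s) $r_{a_i - 1} \cdots r_{b_i} \cdot r_j$ back into increasing $R_\bullet^\bullet$-form using the braid relation (1), far-commutation (7), and $r_i^2 = r_i^3$, while also treating the case in which the appended edge merges with or annihilates an existing one, so that $k$ decreases and a $p$ must be reinserted into the tail (via $r_i = r_i p_{i+1}$ or $r_i^2 = p_i p_{i+1}$) to restore the normal-form shape. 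The main obstacle is precisely this case analysis: it must be organized according to the position of $j$ relative to the staircase $b_1 < \cdots < b_k$, $a_1 < \cdots < a_k$, there are several cases, and in each one every rewriting step has to be licensed by one of the seven listed relations; the computations are elementary but long, and follow the pattern of the $P_n$ argument in \cite{Kathy}. Granting the closure property, every element of $\mathcal{M}$ equals a normal-form word, so $|\mathcal{M}| \le C_{n+1} = |RP_n|$, and the surjection $\phi$ is forced to be a bijection, which proves the presentation.
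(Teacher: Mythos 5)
Your proposal is correct and follows essentially the same route as the paper: your ``normal form'' $R_{b_1}^{a_1}\cdots R_{b_k}^{a_k}\,p_{c_1}\cdots p_{c_{n-k}}$ is exactly the paper's standard word $W=RP$, and the paper likewise proves sufficiency by inducting on word length, appending a generator $x\in\{r_j,p_j\}$ to a standard word and rewriting $Wx$ into standard form via the same case analysis on the position of $j$ (commuting $p_j$ leftward through the $R$-blocks, absorbing or re-expanding $r_j$ using the braid and $r_ip_i=p_ip_{i+1}$ relations), then counting standard words against diagrams. The only cosmetic difference is that you phrase the conclusion as a cardinality bound forcing the surjection $\phi$ to be an isomorphism, which the paper leaves implicit.
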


\begin{proof}
Theorem \ref{generators} shows that $r_i$ and $p_i$ generate all of $RP_n$, so every diagram can be written as a formal word.  We must show that there are at most as many distinct equivalence classes of words as there are diagrams.  To this end we define the standard word as follows:

A word $W$ is said to be in standard form if there exist subsets of $[n]$, $S$ and $T$ with the following three properties:\\
\indent $\bullet$ The sets $S$ and $T$ have equal cardinality, say $k$.\\
Let $S_i$ and $T_i$ be the $i$th elements $S$ and $T$ respectively. For example, if $S=\{1,4,7\}$, then $S_1=1, S_2=4, S_3=7$.\\
\indent $\bullet$ For all $1\leq i \leq k, T_i\leq S_i$.\\
Let $R^a_b$ be defined as is Theorem \ref{generators}.\\
\indent $\bullet$ $W=RP$, where $R=\prod_{i=1}^k R^{S_i}_{T_i}$ and $P=\prod_{i\notin T} p_i$

Since every diagram corresponds to exactly one standard word, we must show that each formal word is equal to a standard word.  Given a standard word $W=R P$ and a generator $x_i \in \{r_i, p_i\}$ we show that we can standardize the product $Wx_i$.  We will assume $W$ is not the empty word, since if $W$ is the empty word, after appending either of the generators the product is obviously equivalent to a standard word.  First consider the product $Wp_i$.  We have two cases:
\begin{itemize}
\item[] Case 1: $i \notin \beta(d)$.  Then necessarily $p_i$ appears in the product $W$.  Since $p_i p_j = p_j p_i$, $W$ can be written in the form: $W=R P_0 p_i$ where $P_0$ does not contain $p_i$
.  Then using the relation $p_j^2= p_j$ we have that $Wp_i=  R P_0 p_i p_i = R P_0 p_i=W$.
\item[] Case 2: $i \in \beta(d)$.  This means that $p_i \notin P$.  So if $r_i \notin R$ then $Wp_i = RPp_i = RP'$.  If $r_i \in R$ then there exists $j$ such that $r_i \in R_{i}^{j}$ for $i\leq j \leq k$.  We commute the $p_i$ next to the $r_i$ and use the relation $r_s p_s = p_s p_{s+1}$ to remove the $r$'s in $R_i^j$.  Then we commute the $p$'s past the $r$'s.  Thus,
{
\begin{eqnarray*}
Wp_i&=&RPp_i\\
&=& R_{b_1}^{a_1} \dots R_{i}^{j} \dots R_{b_k}^{a_k}P p_i\\
&=& R_{b_1}^{a_1} \dots r_{j} \dots r_ip_i \dots R_{b_k}^{a_k} P\ (p_s p_t=p_t p_s\ \mathrm{and}\ p_sr_t=r_t p_s\ \textrm{for}\ |t-s| \geq 2)\\
&=& R_{b_1}^{a_1} \dots p_i \dots p_{j+1} \dots R_{b_k}^{a_k} P\ (r_s p_s = p_s p_{s+1})\\
&=& R_{b_1}^{a_1} \dots R_{b_k}^{a_k} Pp_i \dots p_{j+1}\ (p_sr_t=r_t p_s)\\
&=&R'P'.
\end{eqnarray*}
}
\end{itemize}

Now consider $Wr_i$.  Let $p_j$ be the rightmost letter of $W.$ There are four cases to consider:
\begin{itemize}
\item[] Case 1: $|i-j|\geq 2$. Since any of the generators commute when the indices are at least two apart, $r_i$ can be moved to the left.

\item[] Case 2:  $i=j-1$. Since $p_{j} \cdot r_{j-1}= p_{j-1} \cdot p_j$, we can eliminate $r_i$.

\item[] Case 3: $i=j+1$. Since $p_{j} \cdot r_{j+1} = r_{j+1} p_j$ we can move $r_i$ to the left.

\item[] Case 4: $i=j$. Using the relation $p_j \cdot r_j = p_j \cdot p_{j+1}$, we can eliminate $r_j$.
\end{itemize}
These relations can then be applied repeatedly until the $r$'s have been eliminated or commuted past $P$.  If we eliminate the $r$'s then we have that $Wr_i= RPr_i= RP'$.  If the $r$'s are commuted past $P$ then we must show that words of the form $Rr_lP$ can be standardized. Let $r_j$ be the letter immediately to the left of $r_l$. We again have four cases depending on the difference between $l$ and $j$:
\begin{itemize}
\item[] Case 1: $|l-j|\geq 2$. Since any of the generators commute when the indices are at least two apart, $r_l$ can be moved to the left.

\item[] Case 2: $l=j-1$. We have that $Rr_lP= R_{b_1}^{a_1}\cdots R^{a_m}_jr_{j-1}P= R_{b_1}^{a_1}\cdots  R_{j-1}^{a_m}P$, which is already in standard form.

\item[] Case 3: $l=j+1$. This implies that $R r_l P=  R_{b_1}^{a_1}\cdots  R_{j}^{a_m}r_{j+1}P$, which is already in standard form.

\item[] Case 4: $l=j$.  Note that $r_l\cdot r_j=r_l^2=p_l\cdot p_{l+1}$. Moving the $p_l\cdot p_{l+1}$ to the right, through $r$'s, is equivalent to moving $r$'s to the left through $p$'s, which is doable, as proven above.
\end{itemize}

This process can be repeated as long as the word is not in standard form. So we have proven that $Wx_i$ can be standardized. By induction, this implies that when a string of arbitrary length is appended to a word in standard form the product can be standardized. Thus any word can be put into standard form, implying that our relations are sufficient.
\end{proof}
\subsection{Presentation of $LP_n$}

For every diagram $d$ in $RP_n$, let $d^*$ be the diagram obtained by interchanging the vertices in the top and bottom rows of $d$ while maintaining edge connections. Notice that $$l_i^*=r_i,\ r_i^*=l_i,\ p_i^*=p_i$$ for all $i$ such that the terms are defined. Thus the function $*$ is an antiisomorphism and an involution, meaning that $*$ is an isomorphism from $RP_n$ to the opposite of $RP_n$ (which is $LP_n$), and $*$ is its own inverse. Therefore $LP_n$ is antiisomorphic to $RP_n$, and from every theorem about $RP_n$ one can easily derive a corresponding theorem regarding $LP_n$. In particular, it can easily be shown that $LP_n$ is generated by $l_i$ and $p_i$ and that it is completely characterized by relations analogous to those in theorem \ref{RPn relations}.


\section{Temperley-Lieb Algebra}
A \textit{Temperley-Lieb diagram} has two rows of $n$ vertices, connected by non-crossing edges.  Unlike planar rook diagrams, Temperley-Lieb diagrams do not have any empty vertices and they may have \textit{horizontal edges}.  A horizontal edge is an edge connecting two distinct vertices in the same row. 
The collection of linear combinations of Temperley-Lieb diagrams forms the Temperley Lieb algebra, denoted $TL_n(x)$.

To multiply two Temperley-Lieb diagrams $d_1$ and $d_2$, we place $d_1$ on top of $d_2$, and identify the bottom vertices of $d_1$ with the top vertices of $d_2$.  Additionally, horizontal edges between vertices in the bottom set of $d_1$ and top set of $d_2$ that form a closed loop produce a \textit{bubble}.  We eliminate the bubbles and multiply the product diagram by a factor of $x$ for each bubble that is removed.

As discussed in \cite{temperley-lieb}, every Temperley-Lieb diagram can be written as the product of diagrams $t_i$, where $${\beginpicture
\setcoordinatesystem units <0.3cm,0.15cm>
\setplotarea x from 1 to 3.8, y from 0 to 3
\linethickness=0.3pt
\put{$\bullet$} at 2 -1 \put{$\bullet$} at 2 2
\put{$\bullet$} at 5 -1 \put{$\bullet$} at 5 2
\put{$\bullet$} at 6 -1 \put{$\bullet$} at 6 2
\put{$\bullet$} at 7 -1 \put{$\bullet$} at 7 2
\put{$\bullet$} at 8 -1 \put{$\bullet$} at 8 2

\put{$\bullet$} at 11 -1 \put{$\bullet$} at 11 2
\put{$\cdots$} at 3.625 2 \put{$\cdots$} at 3.625 -1
\put{$\cdots$} at 9.625 2
\put{$\cdots$} at 9.625 -1

\plot 2 2 2 -1 /
\plot 5 2 5 -1 /
\plot 8 2 8 -1 /
\plot 11 2 11 -1 /
\setquadratic
\plot 6 2 6.5 1 7 2 /
\plot 6 -1 6.5 0 7 -1 /

\put{$\scriptstyle{i\,}$} at 6 3.75
\put{$t_i =$} at 0 .5
\endpicture} $$ 

for $1\leq i <n$.  The algebra $TL_n(x)$ is completely characterized by the following relations
\begin{eqnarray*}
t_it_j&=&t_jt_i \textrm{ if $|i-j|\geq 2$}\\
t_i&=&t_it_jt_i \textrm{ if $|i-j|=1$}\\
t_i^2&=&xt_i\textrm{ for all $1\leq i< n$}.
\end{eqnarray*}

These relations are sufficient to show that any string of Temperley-Lieb generators is equivalent to a string of the form $$x^m(t_{i_1}t_{i_1-1}\dots t_{j_1})(t_{i_2}t_{i_2-1}\dots t_{j_2})\dots(t_{i_p}t_{i_p-1}\dots t_{j_p}),$$ for some integers $m, p, i_1,j_1,\dots,i_p, j_p$ with the properties that
\begin {eqnarray*}
m,p&\geq& 0\\
1&\leq& i_1 < i_2 < \dots < i_p < n\\ 
1&\leq& j_1 < j_2 < \dots < j_p < n\\
j_1&\leq& i_1, j_2 \leq i_2, \dots, j_p \leq i_p.
\end{eqnarray*}
The above form is called the \textit{standard form} of a Temperley-Lieb word. \\

We will need to use some non-standard forms of words in the Temperley-Lieb algebra.
\begin{lemma}
For any Temperley-Lieb diagram, $d$, in which vertex $i'$ is connected to $j$ or $j'$, we can express $d$ as a word in one of the following five forms:

\begin{enumerate}\label{okay}
\item If $i'$ is connected to $j'$ with $i<j$. Then $d$ can be written in the form\\
$T (t_i t_{i+2}\cdots t_{j-1}) (t_{i+1} t_{i+3} \cdots t_{j-2}) T'$, where $T \in \langle t_1 , \ldots, t_{n-1} \rangle, T'\in \langle t_{i+1}, \ldots, t_{j-2} \rangle$. Note that if $j'=i'+1$ then the word is of the form $T t_i$.

\item If $i'$ is connected to $j'$ with $j<i$. Then $d$ can be written in the form\\
$T (t_j t_{j+2}\cdots t_{i-1}) (t_{j+1} t_{j+3} \cdots t_{i-2}) T'$, where $T \in \langle t_1 , \ldots, t_{n-1} \rangle, T'\in \langle t_{j+1}, \ldots, t_{i-2} \rangle$. Note that if $i'=j'+1$ then the word is of the form $T t_j$.

\item If $i'$ is connected to $j$ with $j<i$. Then $d$ can be written in the form \\
$T (t_{j+1} t_{j+3} \cdots t_{i-1})(t_j t_{j+2} \cdots t_{i-2}) T'$, where $T \in \langle t_{j+1}, \ldots, t_{n-1} \rangle, T' \in \langle t_1, \ldots, t_{i-2} \rangle.$

\item If $i'$ is connected to $j$ with $j>i$.  Then $d$ can be written in the form \\
$T (t_i t_{i+2} \cdots t_{j-2})(t_{i+1} t_{i+3} \cdots t_{j-1}) T'$ where $T \in \langle t_1, \ldots, t_{j-2} \rangle, T' \in \langle t_{i+1}, \ldots, t_{n-1} \rangle $.

\item If $i'$ is connected to $j$ with $j=i$.  Then $d$ can be written in the form $T T'$, where  $T \in \langle t_{1}, t_{2}, \ldots, t_{i-2} \rangle$,  $T' \in \langle t_{i+1}, t_{i+2}, \ldots, t_{n-1} \rangle$.
\end{enumerate}
\end{lemma}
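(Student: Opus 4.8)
The plan is to go through the five cases, first cutting their number down by symmetry. Reflecting a Temperley--Lieb diagram left--to--right is an automorphism of $TL_n(x)$ that sends $t_k$ to $t_{n-k}$, and it interchanges (up to relabelling of $i,j$) Case 1 with Case 2 and Case 3 with Case 4; here one also uses that within each ``core'' word the generators pairwise commute, so their order does not matter as a diagram. Hence it suffices to handle Cases 1, 4, and 5. Throughout I would invoke the generation result from \cite{temperley-lieb} in two ways: to express a Temperley--Lieb diagram supported on a block of consecutive columns as a word in the generators for that block, and to know that one-sided multiplication by words in a given block of generators can realize every diagram supported on that block.

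Case 5 (a vertical edge $i'$–$i$) I would do first and directly. Since $d$ is non-crossing, every other edge of $d$ must have both endpoints among columns $1,\dots,i-1$ or both among columns $i+1,\dots,n$: an edge with one endpoint on each side would cross the vertical edge at column $i$. Thus $d$ is the juxtaposition of a diagram on columns $1,\dots,i-1$ and a diagram on columns $i+1,\dots,n$, and applying the generation result to each part writes $d=TT'$ with $T\in\langle t_1,\dots,t_{i-2}\rangle$ and $T'\in\langle t_{i+1},\dots,t_{n-1}\rangle$ (these commute, so either order gives the word we want).

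For Cases 1 and 4 the engine is the explicit core word printed in the statement. The first step is a diagram chase: for Case 4 I would trace through $C:=(t_i t_{i+2}\cdots t_{j-2})(t_{i+1}t_{i+3}\cdots t_{j-1})$ and check that it is the diagram whose only through-strand among columns $i,\dots,j$ is $\{i',j\}$, whose top is the cups $\{i,i+1\},\{i+2,i+3\},\dots,\{j-2,j-1\}$, whose bottom is the cups $\{(i+1)',(i+2)'\},\dots,\{(j-1)',j'\}$, and which is vertical on all other columns; the trace is just the zig--zag through the alternating cups, and the parity $j\equiv i\pmod 2$ is exactly what makes the two arithmetic progressions of indices legal. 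The analogous check in Case 1 yields the cup $\{i',j'\}$ together with a canonical filling of small cups below it and canonical cups above it. Next I would observe, using non-crossingness, that the edge at $i'$ produced by $C$ splits the remaining data of $d$ into two pieces living on disjoint blocks of columns: the piece ``trapped'' by the edge, supported on exactly the columns on which $T'$ is allowed to act ($\{i+1,\dots,j-1\}$ in Case 1, $\{i+1,\dots,n\}$ in Case 4), and the complementary piece, supported on the columns on which $T$ is allowed to act (all columns in Case 1, $\{1,\dots,j-1\}$ in Case 4, with $C$'s vertical strands carrying the other block into range). Finally I would argue that right-multiplying $C$ by a suitable word $T'$ in the first block of generators turns its canonical filling into the actual filling of $d$, after which left-multiplying by a suitable word $T$ in the second block of generators produces the remaining structure of $d$; each step is an instance of the generation result applied to a sub-region, with the words chosen so as to close off no loop. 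Concatenating gives $d=T\,C\,T'$.

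I expect the last step to be the crux. One must check carefully that the ``trapped'' columns are exactly the index range allotted to $T'$, so that the restricted generator set genuinely suffices, and --- more substantively --- that once $C$ has locked the edge at $i'$ in place the two sides really are independent, so that no word mixing the two blocks is needed and the canonical patterns coming out of $C$ can be transformed into those of $d$ purely by one-sided multiplication and without creating spurious factors of $x$. This is visually clear but is where the argument requires care, and it is essentially the only place the full strength of the generation result of \cite{temperley-lieb} is used.
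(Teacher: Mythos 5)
Your proposal is correct and rests on the same decomposition the paper uses --- identify the explicit middle word $C$ of each case as a concrete diagram with the prescribed edge, then absorb the rest of $d$ into a factor $T$ above and $T'$ below --- but it is a genuinely more complete argument than the one printed. The paper's proof only checks the easy direction (that a word of the stated form produces the stated connection) and dismisses the converse, which is what the lemma actually asserts and what is used later, with ``by drawing diagrams, one can see.'' You supply the missing content: the left--right reflection $t_k\mapsto t_{n-k}$ reducing five cases to three (your index bookkeeping there is consistent with the stated generating sets for $T$ and $T'$ in each case, including the parity constraints); the clean direct argument for Case 5; and, for Cases 1 and 4, the observation that the distinguished edge created by $C$ separates the remaining edges of $d$ into two planar regions whose boundary points are exactly those reachable by the two allowed blocks of generators. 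The one step you assert rather than prove is the crux you yourself flag: that one-sided multiplication by words in a block of generators carries the canonical ``seed'' half-diagram presented by $C$ onto every non-crossing filling of the corresponding region, and can be arranged to close no loop. This is true and standard (and the loop issue is harmless here anyway, since the paper works with $x=1$), but a self-contained write-up should construct the required $T'$ explicitly from the target matching; even without that, your argument is strictly more rigorous than the paper's own.
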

\begin{proof}
Given that these are the only five ways the vertex $i'$ is connected to $j$ or $j'$, by drawing diagrams, one can see that these are indeed the ways in which Temperley-Lieb diagrams can be drawn.  For example, in \textit{1.}, simply take the diagram $t_i t_{i+2} \cdots t_{j-1}$ and perform multiplication by stacking it above the diagram $t_{i+1} t_{i+3} \cdots t_{j-2}$, along with any chosen diagram $T \in \langle t_1 , \ldots, t_{n-1} \rangle$ above this product, and  $T'\in \langle t_{i+1}, \ldots, t_{j-2} \rangle$ below, and one can verify that indeed this gives the desired connection from $i'$ to $j'$ with $i < j$.
\end{proof}

In this paper we will consider multiplication of diagrams, but we will not consider linear combinations of diagrams. For simplicity, we will therefore take $x$ to be $1$ and refer exclusively to the monoid $TL_n$, although all results can be generalized to the algebra $TL_n(x)$, by multiplying by $x$ where appropriate.

\section{Motzkin Monoid}




A Motzkin diagram is similar to a Temperley-Lieb diagram, however Motzkin diagrams may have \textit{empty} vertices (vertices not incident to an edge).  For example:\\

\begin{center}
$d$\ ={\beginpicture
\setcoordinatesystem units <0.5cm,0.3cm>
\setplotarea x from 0 to 8, y from -1 to 2
\put{$\bullet$} at  1 -1  \put{$\bullet$} at  1 2
\put{$\bullet$} at  2 -1  \put{$\bullet$} at  2 2
\put{$\bullet$} at  3 -1  \put{$\bullet$} at  3 2
\put{$\bullet$} at  4 -1  \put{$\bullet$} at  4 2
\put{$\bullet$} at  5 -1  \put{$\bullet$} at  5 2
\put{$\bullet$} at  6 -1  \put{$\bullet$} at  6 2
\put{$\bullet$} at  7 -1  \put{$\bullet$} at  7 2
\plot  4 2  1 -1 /
\plot 5 2 2 -1 /
\plot 6 2 7 -1  /
\setquadratic
\plot 1 2 2 1 3 2 /
\plot 3 -1 4.5 .25 6 -1 /
\plot 4 -1 4.5 -.25 5 -1 /
\put{$\scriptstyle{1\,}$} at 1 3.0
\put{$\scriptstyle{2\,}$} at 2 3.0
\put{$\scriptstyle{3\,}$} at 3 3.0
\put{$\scriptstyle{4\,}$} at 4 3.0
\put{$\scriptstyle{5\,}$} at 5 3.0
\put{$\scriptstyle{6\,}$} at 6 3.0
\put{$\scriptstyle{7\,}$} at 7 3.0
\put{$\scriptstyle{1'\,}$} at 1 -2
\put{$\scriptstyle{2'\,}$} at 2 -2
\put{$\scriptstyle{3'\,}$} at 3 -2
\put{$\scriptstyle{4'\,}$} at 4 -2
\put{$\scriptstyle{5'\,}$} at 5 -2
\put{$\scriptstyle{6'\,}$} at 6 -2
\put{$\scriptstyle{7'\,}$} at 7 -2
\endpicture}\\
\end{center}

We define the \textit{Motzkin Monoid}, $M_n$, as the set of Motzkin diagrams with $n$ vertices.
For example $M_2$ consists of the following diagrams:\\

\indent $M_2\ = $
{\beginpicture
\setcoordinatesystem units <0.55cm,0.55cm>
\setplotarea x from 1 to 2, y from 0 to 1
\put{$\bullet$} at  1 0  \put{$\bullet$} at  1 1
\put{$\bullet$} at  2 0  \put{$\bullet$} at  2 1
\plot 1 0 1 1 /
\endpicture}  \qquad
{\beginpicture
\setcoordinatesystem units <0.55cm,0.55cm>
\setplotarea x from 1 to 2, y from 0 to 1
\put{$\bullet$} at  1 0  \put{$\bullet$} at  1 1
\put{$\bullet$} at  2 0  \put{$\bullet$} at  2 1
\plot 2 0 2 1 /
\endpicture} \qquad
{\beginpicture
\setcoordinatesystem units <0.55cm,0.55cm>
\setplotarea x from 1 to 2, y from 0 to 1
\put{$\bullet$} at  1 0  \put{$\bullet$} at  1 1
\put{$\bullet$} at  2 0  \put{$\bullet$} at  2 1
\plot 1 0 2 1 /
\endpicture} \qquad
{\beginpicture
\setcoordinatesystem units <0.55cm,0.55cm>
\setplotarea x from 1 to 2, y from 0 to 1
\put{$\bullet$} at  1 0  \put{$\bullet$} at  1 1
\put{$\bullet$} at  2 0  \put{$\bullet$} at  2 1
\plot 1 1 2 0 /
\endpicture} \qquad
{\beginpicture
\setcoordinatesystem units <0.55cm,0.55cm>
\setplotarea x from 1 to 2, y from 0 to 1
\put{$\bullet$} at  1 0  \put{$\bullet$} at  1 1
\put{$\bullet$} at  2 0  \put{$\bullet$} at  2 1
\setquadratic
\plot 1 1 1.5 .75 2 1 /
\plot 1 0 1.5 .25 2 0 /
\endpicture} \qquad
{\beginpicture
\setcoordinatesystem units <0.55cm,0.55cm>
\setplotarea x from 1 to 2, y from 0 to 1
\put{$\bullet$} at  1 0  \put{$\bullet$} at  1 1
\put{$\bullet$} at  2 0  \put{$\bullet$} at  2 1
\setquadratic
\plot 1 1 1.5 .75 2 1 /
\endpicture} \qquad
{\beginpicture
\setcoordinatesystem units <0.55cm,0.55cm>
\setplotarea x from 1 to 2, y from 0 to 1
\put{$\bullet$} at  1 0  \put{$\bullet$} at  1 1
\put{$\bullet$} at  2 0  \put{$\bullet$} at  2 1
\setquadratic
\plot 1 0 1.5 .25 2 0 /
\endpicture} \qquad
{\beginpicture
\setcoordinatesystem units <0.55cm,0.55cm>
\setplotarea x from 1 to 2, y from 0 to 1
\put{$\bullet$} at  1 0  \put{$\bullet$} at  1 1
\put{$\bullet$} at  2 0  \put{$\bullet$} at  2 1
\endpicture}
\vspace{5mm}

To multiply two Motzkin diagrams, $d_1$ and $d_2$, we place $d_1$ on top of $d_2$, and identify the bottom vertices of $d_1$ with the top vertices of $d_2$.  As with Temperley-Lieb daigrams, closed loops are discarded.  For example,\\

$$
\textrm{if}\  d_1 =
{\beginpicture
\setcoordinatesystem units <0.4cm,0.3cm>
\setplotarea x from 1 to 9, y from -1 to 2
\put{$\bullet$} at  1 -1  \put{$\bullet$} at  1 2
\put{$\bullet$} at  2 -1  \put{$\bullet$} at  2 2
\put{$\bullet$} at  3 -1  \put{$\bullet$} at  3 2
\put{$\bullet$} at  4 -1  \put{$\bullet$} at  4 2
\put{$\bullet$} at  5 -1  \put{$\bullet$} at  5 2
\put{$\bullet$} at  6 -1  \put{$\bullet$} at  6 2
\put{$\bullet$} at  7 -1  \put{$\bullet$} at  7 2
\put{$\bullet$} at  8 -1  \put{$\bullet$} at  8 2
\put{$\bullet$} at  9 -1  \put{$\bullet$} at  9 2
\plot 5 2  7 -1 /
\plot 6 2 8 -1 /
\plot 9 2 9 -1 /
\setquadratic
\plot 1 2 2 1 3 2 /
\plot 7 2 7.5 1 8 2 /
\plot 1 -1 3.5 .5 6 -1 /
\plot 2 -1 3 0 4 -1 /
\endpicture}
\qquad\hbox{ and }\qquad
d_2 = {\beginpicture
\setcoordinatesystem units <0.4cm,0.3cm>
\setplotarea x from 1 to 9, y from -1 to 2
\put{$\bullet$} at  1 -1  \put{$\bullet$} at  1 2
\put{$\bullet$} at  2 -1  \put{$\bullet$} at  2 2
\put{$\bullet$} at  3 -1  \put{$\bullet$} at  3 2
\put{$\bullet$} at  4 -1  \put{$\bullet$} at  4 2
\put{$\bullet$} at  5 -1  \put{$\bullet$} at  5 2
\put{$\bullet$} at  6 -1  \put{$\bullet$} at  6 2
\put{$\bullet$} at  7 -1  \put{$\bullet$} at  7 2
\put{$\bullet$} at  8 -1  \put{$\bullet$} at  8 2
\put{$\bullet$} at  9 -1  \put{$\bullet$} at  9 2
\plot 3 2  1 -1 /
\plot 7 2 4 -1 /
\setquadratic
\plot 1 2 1.5 1 2 2 /
\plot 4 2 5 1 6 2 /
\plot 8 2 8.5 1 9 2 /
\plot 5 -1 6.5 .25 8 -1 /
\plot 6 -1 6.5 -.25 7 -1 /
\endpicture}
$$
then
$$
d_1 d_2 =
\begin{array}{l}
{\beginpicture
\setcoordinatesystem units <0.4cm,0.3cm>
\setplotarea x from 1 to 9, y from -1 to 2
\put{$\bullet$} at  1 -1  \put{$\bullet$} at  1 2
\put{$\bullet$} at  2 -1  \put{$\bullet$} at  2 2
\put{$\bullet$} at  3 -1  \put{$\bullet$} at  3 2
\put{$\bullet$} at  4 -1  \put{$\bullet$} at  4 2
\put{$\bullet$} at  5 -1  \put{$\bullet$} at  5 2
\put{$\bullet$} at  6 -1  \put{$\bullet$} at  6 2
\put{$\bullet$} at  7 -1  \put{$\bullet$} at  7 2
\put{$\bullet$} at  8 -1  \put{$\bullet$} at  8 2
\put{$\bullet$} at  9 -1  \put{$\bullet$} at  9 2
\plot 5 2  7 -1 /
\plot 6 2 8 -1 /
\plot 9 2 9 -1 /
\setquadratic
\plot 1 2 2 1 3 2 /
\plot 7 2 7.5 1 8 2 /
\plot 1 -1 3.5 .5 6 -1 /
\plot 2 -1 3 0 4 -1 /
\endpicture}
\\
{\beginpicture
\setcoordinatesystem units <0.4cm,0.3cm>
\setplotarea x from 1 to 9, y from -1 to 2
\put{$\bullet$} at  1 -1  \put{$\bullet$} at  1 2
\put{$\bullet$} at  2 -1  \put{$\bullet$} at  2 2
\put{$\bullet$} at  3 -1  \put{$\bullet$} at  3 2
\put{$\bullet$} at  4 -1  \put{$\bullet$} at  4 2
\put{$\bullet$} at  5 -1  \put{$\bullet$} at  5 2
\put{$\bullet$} at  6 -1  \put{$\bullet$} at  6 2
\put{$\bullet$} at  7 -1  \put{$\bullet$} at  7 2
\put{$\bullet$} at  8 -1  \put{$\bullet$} at  8 2
\put{$\bullet$} at  9 -1  \put{$\bullet$} at  9 2
\plot 3 2  1 -1 /
\plot 7 2 4 -1 /
\setquadratic
\plot 1 2 1.5 1 2 2 /
\plot 4 2 5 1 6 2 /
\plot 8 2 8.5 1 9 2 /
\plot 5 -1 6.5 .25 8 -1 /
\plot 6 -1 6.5 -.25 7 -1 /
\endpicture}
\end{array}
= \
{\beginpicture
\setcoordinatesystem units <0.4cm,0.3cm>
\setplotarea x from 1 to 9, y from -1 to 2
\put{$\bullet$} at  1 -1  \put{$\bullet$} at  1 2
\put{$\bullet$} at  2 -1  \put{$\bullet$} at  2 2
\put{$\bullet$} at  3 -1  \put{$\bullet$} at  3 2
\put{$\bullet$} at  4 -1  \put{$\bullet$} at  4 2
\put{$\bullet$} at  5 -1  \put{$\bullet$} at  5 2
\put{$\bullet$} at  6 -1  \put{$\bullet$} at  6 2
\put{$\bullet$} at  7 -1  \put{$\bullet$} at  7 2
\put{$\bullet$} at  8 -1  \put{$\bullet$} at  8 2
\put{$\bullet$} at  9 -1  \put{$\bullet$} at  9 2
\plot 5 2  4 -1 /
\setquadratic
\plot 6 2 7.5 .5 9 2 /
\plot 1 2 2 1 3 2 /
\plot 7 2 7.5 1 8 2 /
\plot 5 -1 6.5 .25 8 -1 /
\plot 6 -1 6.5 -.25 7 -1 /
\endpicture}
$$

For any Motzkin diagram $d$, the symbols $\tau(d)$ and $\beta(d)$ refer to the sets of non-empty top and bottom vertices respectively, as in the Planar Rook Monoid.

\subsection{Decomposing the Motzkin Diagrams}

In \cite{Halverson}, it is proved that any diagram in the Motzkin monoid can be written in the form $d$=$rtl$ where $r \in RP_n$, $t \in TL_n$, and $l \in LP_n$. We now describe the algorithm to compute $r$, $t$, and $l$ for any given $d$. 

As in \cite{Kathy}, we let $r=R^S$ be the diagram with top set $S=\tau(d)$ and bottom set, $\{1,\ldots,|S|\}$. Let $l= L_T$ be the diagram with bottom set, $T=\beta(d)$ and top set, $\tau(l) =$ \{1,\ldots,$|T|$\}.

To obtain $t$, first shift the isolated vertices of $d$ to the right of the diagram, while preserving connections to the other vertices, to produce the diagram $d'$. For example, if $d$ is the diagram below, then $d'$ would be the diagram below it.

 \begin{center}
$d$\ ={\beginpicture
\setcoordinatesystem units <0.5cm,0.3cm>
\setplotarea x from 0 to 8, y from -1 to 2
\put{$\bullet$} at  1 -1  \put{$\bullet$} at  1 2
\put{$\bullet$} at  2 -1  \put{$\bullet$} at  2 2
\put{$\bullet$} at  3 -1  \put{$\bullet$} at  3 2
\put{$\bullet$} at  4 -1  \put{$\bullet$} at  4 2
\put{$\bullet$} at  5 -1  \put{$\bullet$} at  5 2
\put{$\bullet$} at  6 -1  \put{$\bullet$} at  6 2
\put{$\bullet$} at  7 -1  \put{$\bullet$} at  7 2
\plot  4 2  1 -1 /
\plot 5 2 2 -1 /
\plot 6 2 7 -1  /
\setquadratic
\plot 1 2 2 1 3 2 /
\plot 3 -1 4.5 .25 6 -1 /
\plot 4 -1 4.5 -.25 5 -1 /
\put{$\scriptstyle{1\,}$} at 1 3.0
\put{$\scriptstyle{2\,}$} at 2 3.0
\put{$\scriptstyle{3\,}$} at 3 3.0
\put{$\scriptstyle{4\,}$} at 4 3.0
\put{$\scriptstyle{5\,}$} at 5 3.0
\put{$\scriptstyle{6\,}$} at 6 3.0
\put{$\scriptstyle{7\,}$} at 7 3.0
\put{$\scriptstyle{1'\,}$} at 1 -2
\put{$\scriptstyle{2'\,}$} at 2 -2
\put{$\scriptstyle{3'\,}$} at 3 -2
\put{$\scriptstyle{4'\,}$} at 4 -2
\put{$\scriptstyle{5'\,}$} at 5 -2
\put{$\scriptstyle{6'\,}$} at 6 -2
\put{$\scriptstyle{7'\,}$} at 7 -2
\endpicture}\\
\end{center}

 \begin{center}
 $d'$= {\beginpicture
\setcoordinatesystem units <0.5cm,0.3cm>
\setplotarea x from 0 to 8, y from -1 to 2
\put{$\bullet$} at  1 -1  \put{$\bullet$} at  1 2
\put{$\bullet$} at  2 -1  \put{$\bullet$} at  2 2
\put{$\bullet$} at  3 -1  \put{$\bullet$} at  3 2
\put{$\bullet$} at  4 -1  \put{$\bullet$} at  4 2
\put{$\bullet$} at  5 -1  \put{$\bullet$} at  5 2
\put{$\bullet$} at  6 -1  \put{$\bullet$} at  6 2
\put{$\bullet$} at  7 -1  \put{$\bullet$} at  7 2
\plot  3 2  1 -1 /
\plot 4 2 2 -1 /
\plot 5 2 7 -1  /
\setquadratic
\plot 1 2 1.5 1 2 2 /
\plot 3 -1 4.5 .25 6 -1 /
\plot 4 -1 4.5 -.25 5 -1 /
\put{$\scriptstyle{1\,}$} at 1 3.0
\put{$\scriptstyle{2\,}$} at 2 3.0
\put{$\scriptstyle{3\,}$} at 3 3.0
\put{$\scriptstyle{4\,}$} at 4 3.0
\put{$\scriptstyle{5\,}$} at 5 3.0
\put{$\scriptstyle{6\,}$} at 6 3.0
\put{$\scriptstyle{7\,}$} at 7 3.0
\put{$\scriptstyle{1'\,}$} at 1 -2
\put{$\scriptstyle{2'\,}$} at 2 -2
\put{$\scriptstyle{3'\,}$} at 3 -2
\put{$\scriptstyle{4'\,}$} at 4 -2
\put{$\scriptstyle{5'\,}$} at 5 -2
\put{$\scriptstyle{6'\,}$} at 6 -2
\put{$\scriptstyle{7'\,}$} at 7 -2
\endpicture}
\end{center}

 Next we deal with the issue of filling in the empty vertices to complete the Temperley-Lieb diagram.  Let $\tau_{0}$ and $\beta_0$ denote the number of empty vertices on the top and bottom of $d'$ respectively.  For example, if $d'$ is as above, then $\tau_0 = 2$ and $\beta_0 = 0$.  Note that the difference between $\tau_{0}$ and $\beta_0$ must even.  We then have two cases for filling in the empty vertices:
\begin{itemize}
\item[] Case 1: $|\tau(d)| \leq |\beta(d)|$.
To turn $d'$ into a Temperley-Lieb diagram, first add horizontal edges on top from $j$ to $j+1$ for $j=|\tau(d)|+1, |\tau(d)|+3, \ldots, |\beta(d)|-1$.  Now, add vertical edges from $j$ to $j'$ for $j=|\beta(d)+1,\ldots,n.$
\item[]  Case 2: $|\tau(d)| \geq |\beta(d)|$.
To turn $d'$ into a Temperley-Lieb diagram, first add horizontal edges on bottom from $j'$ to $(j+1)'$ for $j'=|\beta(d)|+1, |\beta(d)|+3, \ldots,|\tau(d)|-1$.  Now, add vertical edges from $j$ to $j'$ for $j=|\tau(d)|+1,\ldots,n$
\end{itemize}
This gives a well defined way to decompose each diagram in the Motzkin Monoid.  Thus $M_n = RP_n\  TL_n\  LP_n$.

\subsection{Presentation of the Motzkin Monoid}
\begin{theorem}\label{mek} The Motzkin Monoid is generated by $t_i$, $r_i$ and $l_i$ subject to the following relations.

{
\begin{enumerate}
\item $r_i^3 = r_i^2 = l_i^2 = l_i^3$
\item $a)\ l_i l_{i+1} l_i = l_i l_{i+1} = l_{i+1} l_i l_{i+1} \\
b)\  r_i r_{i+1} r_i = r_{i+1} r_i = r_{i+1} r_i r_{i+1}$
\item $a)\  l_i r_i l_i = l_i $
\newline $b)\ r_i l_i r_i = r_i$
\item $a)\ l_{i+1} r_i l_i = l_{i+1}r_i$
\newline $b)\ r_{i-1} l_i r_i = r_{i-1} l_i$
\newline $c)\ r_i l_i r_{i+1} = l_i r_{i+1}$
\newline $d)\  l_i r_i l_{i-1} = r_i l_{i-1}$
\item $l_i r_i = r_{i+1} l_{i+1}$
\item $ \mathrm{If\ } |i - j | \geq 2, \mathrm{then}\  r_i l_j = r_j l_i,\  r_i r_j = r_j r_i,\  l_i l_j = l_j l_i,\  t_ir_j=r_jt_i,\  t_il_j=l_jt_i,\ t_j t_i=t_jt_i$
\item $t_i^2 =  t_i$
\item $\mathrm{If}\  |i-j|=1,\ \mathrm{then}\   t_it_jt_i= t_i\ $
\item $a)\ t_i l_i=t_i r_i\\
b)\ l_it_i=r_it_i$  
\item $a)\  t_i r_{i+1} = t_it_{i+1}l_i$
\newline $b)\  l_{i+1}t_i = r_it_{i+1}t_i$
\item a)\ $r_i r_{i+1} t_i = t_{i+1} r_i r_{i+1}\\
b)\ t_i l_{i+1} l_i = l_{i+1} l_i t_{i+1}$
\item $t_i l_i t_i = t_i$
\end{enumerate}
}
\end{theorem}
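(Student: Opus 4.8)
The plan is to follow the template of the proof of Theorem~\ref{RPn relations}: show the proposed generators generate $M_n$, verify the listed relations actually hold, then produce a normal form for words in the generators and count the normal forms. Since the decomposition algorithm of Section~4.1 assigns to each Motzkin diagram a unique triple $(r,t,l)$ with $r\in RP_n$, $t\in TL_n$, $l\in LP_n$ in a restricted range, the number of such normal forms (``standard $RTL$ form'') is exactly $|M_n|$; so if every word in $t_i,r_i,l_i$ can be reduced to a standard word using only the stated relations, the relations present $M_n$.

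First I would note that $t_i$, $r_i$, $l_i$ generate $M_n$: by the result of \cite{Halverson} every Motzkin diagram equals $rtl$ with $r\in RP_n$, $t\in TL_n$, $l\in LP_n$, and Theorem~\ref{generators} (with its $LP_n$ analogue and the standard generation of $TL_n$) expresses each factor as a word in $r_i$, $l_i$, $t_i$, using also $p_i=r_il_i$. The relations are then each checked by drawing the two diagram products they equate and observing that the resulting diagrams coincide; this is routine and I would record it as a remark rather than belabor it.

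The heart of the proof is an induction on word length. Suppose $W$ is already in standard $RTL$ form and $x_i$ is a generator; I must rewrite $Wx_i$ in standard $RTL$ form. This proceeds in three stages. \textbf{Stage 1 ($PTP$ form).} Using relations (9)--(12) --- which let a Temperley--Lieb generator absorb an adjacent $r_i$ or $l_i$ (e.g.\ $t_il_i=t_ir_i$, $t_ir_{i+1}=t_it_{i+1}l_i$, $t_il_it_i=t_i$) --- together with the commutations in (6), one pushes $x_i$ into the middle of the word, producing a word of the form $P_1\,T\,P_2$ with $P_1,P_2\in P_n$ words in $l_i,r_i$ only and $T\in TL_n$. \textbf{Stage 2 (minimal $RTL$ form).} From $PTP$ form one sweeps every empty vertex of $P_1$ and of $P_2$ toward the right; this is done with the five ``move'' lemmas (hop, burrow, slide, wallslide, fuse wire), each describing how a single empty vertex migrates past one Temperley--Lieb edge or past one planar-rook edge. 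Termination is ensured by placing a well-order on $P_n$ so that each move strictly decreases a fixed nonnegative integer statistic, forcing the rewriting to halt. \textbf{Stage 3 (standard $RTL$ form).} When an edge of $T$ in the minimal $RTL$ form has an endpoint lying at a vertex that is empty in $R$ or in $L$, I invoke the lemma on non-standard Temperley--Lieb forms to rewrite $T$ so that these edges can be straightened; the result is in standard $RTL$ form as defined in Section~4.3.1.

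The main obstacle is Stages~2 and~3. Unlike the $RP_n$ situation, the block $T$ can interact simultaneously with $P_1$ and with $P_2$, and pushing an empty vertex through $T$ may create or destroy horizontal edges; one must therefore check that the five move lemmas together exhaust every local configuration that can arise, and that the chosen statistic on $P_n\times TL_n\times P_n$ genuinely decreases under each move so that the procedure terminates. Granting termination and exhaustiveness of the case analysis, the count closes exactly as in Theorem~\ref{RPn relations}: the number of equivalence classes of words is at most the number of standard $RTL$ forms, which equals $|M_n|$, so the listed relations present the Motzkin monoid.
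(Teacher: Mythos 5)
Your proposal follows essentially the same route as the paper: surjectivity via the Halverson $rtl$ decomposition, verification of the relations by diagrams, and an induction that standardizes $Wx_i$ through the same three stages ($PTP$ form, then minimal $RTL$ form via hop/burrow/slide/wallslide/fuse-wire with termination controlled by the order on $P_n$, then standard form), closing with the same count of standard words against $|M_n|$. The outline is a faithful match to the paper's argument, with the remaining work being exactly the case analyses the paper carries out in Lemma \ref{rtlx to ptp} and Theorems \ref{algorithm} and \ref{std}.
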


Each of these relations can be easily verified by drawing the diagrams in question. For example the picture below justifies the relation $t_i r_{i+1} = t_i t_{i+1} l_i$.
$$
{\beginpicture
\setcoordinatesystem units <0.5cm,0.3cm>
\setplotarea x from 1 to 3, y from -1 to 2
\put{$\bullet$} at  1 -4   \put{$\bullet$} at  1 -1  \put{$\bullet$} at  1 2 
\put{$\bullet$} at  2 -4   \put{$\bullet$} at  2 -1  \put{$\bullet$} at  2 2
\put{$\bullet$} at  3 -4   \put{$\bullet$} at  3 -1  \put{$\bullet$} at  3 2

\plot 2 -4 3 -1 /
\setquadratic
\plot 1 2 1.5 1 2 2 /  
\plot 1 -1 1.5 0 2 -1 /

\put{$\scriptstyle{\small{i}\,}$} at 1 3
\put{$\scriptstyle{\small{i+1}\,}$} at 2 3
\put{$\scriptstyle{\small{i+2}\,}$} at 3 3

\endpicture}
\quad = \quad
{\beginpicture
\setcoordinatesystem units <0.5cm,0.3cm>
\setplotarea x from 1 to 3, y from 2 to 4
\put{$\bullet$} at  1 -5.5   \put{$\bullet$} at  1 -2.5   \put{$\bullet$} at  1 0.5  \put{$\bullet$} at  1 3.5 
\put{$\bullet$} at  2 -5.5   \put{$\bullet$} at  2 -2.5   \put{$\bullet$} at  2 0.5  \put{$\bullet$} at  2 3.5
\put{$\bullet$} at  3 -5.5   \put{$\bullet$} at  3 -2.5   \put{$\bullet$} at  3 0.5  \put{$\bullet$} at  3 3.5

\plot 1 -2.5 2 -5.5 /
\plot 3 -2.5 3 -5.5 /
\setquadratic
\plot 1 3.5 1.5 2.5 2 3.5 /
\plot 1 0.5 1.5 1.5 2 0.5 /
\plot 2 0.5 2.5 -0.5 3 0.5 /
\plot 2 -2.5 2.5 -1.5 3 -2.5 /

\put{$\scriptstyle{\small{i}\,}$} at 1 4.5
\put{$\scriptstyle{\small{i+1}\,}$} at 2 4.5
\put{$\scriptstyle{\small{i+2}\,}$} at 3 4.5

\endpicture}
$$
\subsection{The Motzkin Monoid as Words}
In this section we prove theorem \ref{mek}.  To start we will define a set of formal words which are subject exactly to the relations in theorem \ref{mek}.  Next we want to show that there is an isomorphism between this set and $M_n$.  

We define ${M_n'}$ to be the monoid generated by ${t_i}$, ${l_i}$, and ${r_i}$ subject to the same relations as above, and we let $\phi: {M_n'} \rightarrow M_n$ be the map given by
\begin{center}
${\beginpicture
\setcoordinatesystem units <0.3cm,0.15cm>
\setplotarea x from 1 to 3.8, y from 0 to 3
\linethickness=0.3pt
\put{$\bullet$} at 2 -1 \put{$\bullet$} at 2 2
\put{$\bullet$} at 5 -1 \put{$\bullet$} at 5 2
\put{$\bullet$} at 6 -1 \put{$\bullet$} at 6 2
\put{$\bullet$} at 7 -1 \put{$\bullet$} at 7 2
\put{$\bullet$} at 8 -1 \put{$\bullet$} at 8 2

\put{$\bullet$} at 11 -1 \put{$\bullet$} at 11 2
\put{$\cdots$} at 3.625 2 \put{$\cdots$} at 3.625 -1
\put{$\cdots$} at 9.625 2
\put{$\cdots$} at 9.625 -1

\plot 2 2 2 -1 /
\plot 5 2 5 -1 /
\plot 8 2 8 -1 /
\plot 6 -1 7 2 /
\plot 11 2 11 -1 /

\put{$\scriptstyle{i\,}$} at 6 3.75
\put{$r_i \mapsto$} at 0 .5
\endpicture} $
\end{center}

\begin{center}
${\beginpicture
\setcoordinatesystem units <0.3cm,0.15cm>
\setplotarea x from 1 to 3.8, y from 0 to 3
\linethickness=0.3pt
\put{$\bullet$} at 2 -1 \put{$\bullet$} at 2 2
\put{$\bullet$} at 5 -1 \put{$\bullet$} at 5 2
\put{$\bullet$} at 6 -1 \put{$\bullet$} at 6 2
\put{$\bullet$} at 7 -1 \put{$\bullet$} at 7 2
\put{$\bullet$} at 8 -1 \put{$\bullet$} at 8 2

\put{$\bullet$} at 11 -1 \put{$\bullet$} at 11 2
\put{$\cdots$} at 3.625 2 \put{$\cdots$} at 3.625 -1
\put{$\cdots$} at 9.625 2
\put{$\cdots$} at 9.625 -1

\plot 2 2 2 -1 /
\plot 5 2 5 -1 /
\plot 8 2 8 -1 /
\plot 11 2 11 -1 /
\setquadratic
\plot 6 2 6.5 1 7 2 /
\plot 6 -1 6.5 0 7 -1 /

\put{$\scriptstyle{i\,}$} at 6 3.75
\put{$t_i \mapsto$} at 0 .5
\endpicture} $
\end{center}
\begin{center}
$
{\beginpicture
\setcoordinatesystem units <0.3cm,0.15cm>
\setplotarea x from 1 to 3.8, y from 0 to 3
\linethickness=0.3pt
\put{$\bullet$} at 2 -1 \put{$\bullet$} at 2 2
\put{$\bullet$} at 5 -1 \put{$\bullet$} at 5 2
\put{$\bullet$} at 6 -1 \put{$\bullet$} at 6 2
\put{$\bullet$} at 7 -1 \put{$\bullet$} at 7 2
\put{$\bullet$} at 8 -1 \put{$\bullet$} at 8 2

\put{$\bullet$} at 11 -1 \put{$\bullet$} at 11 2
\put{$\cdots$} at 3.625 2 \put{$\cdots$} at 3.625 -1
\put{$\cdots$} at 9.625 2
\put{$\cdots$} at 9.625 -1

\plot 2 2 2 -1 /
\plot 5 2 5 -1 /
\plot 8 2 8 -1 /
\plot 6 2 7 -1 /
\plot 11 2 11 -1 /

\put{$\scriptstyle{i\,}$} at 6 3.75
\put{$l_i \mapsto$} at 0 .5
\endpicture} $
\end{center}

\noindent Since the diagrams $r_i, t_i,$ and $l_i$ satisfy the relations on ${M_n}$, the operations are preserved; thus $\phi$ is a homomorphism.  In order to prove that $\phi$ is also an isomorphism we want to show that ${M_n'}$ has at most as many distinct elements as there are diagrams in $M_n$. We begin by defining a standard word in ${M_n'}$.

\subsubsection{Standard Word}
Given a word $d$ in $M_n'$, let $\tau(d) = \{a_1,a_2,\ldots,a_k\}$ and $\beta(d) = \{b_1,b_2,\ldots,b_j\}$.  We say $d$ is in \textit{standard form} if $d=RTL$ where
$$R=\prod_{i=1}^{k} R_i^{a_i} \cdot \prod_{i=k+1}^{n} p_i\ \textrm{and}\ L=\prod_{i=j+1}^{n} p_i \cdot \prod_{i=0}^{j-1} L_{b_{j-i}}^{j-i}\,.$$\\
We then have three possibilities for $T$:
\begin{itemize}
\item If $k>j$ then, $T=T_1 t_{j+1} t_{j+3} \cdots t_{k-1}\ \textrm{where}\ T_1 \in \langle t_1, t_2, \ldots, t_{k-1} \rangle$.
\item If $k<j$ then, $T= t_{k+1} t_{k+3} \cdots t_{j-1}T_1\ \textrm{where}\ T_1 \in \langle t_1, t_2, \ldots, t_{j-1} \rangle$.
\item If $j=k$ then, $T=T_1\ \textrm{where}\ T_1 \in \langle t_1, t_2, \ldots, t_{j-1} \rangle$.
\end{itemize}
In all the three cases, $T$ is in standard form for Temperley-Lieb words.

This definition of a standard word corresponds to the diagram decomposition described by Halverson in \cite{halverson}.  By the definition of $\phi$, ${M_n'}$ maps to $M_n$.  So if we show that we can standardize any formal word in ${M_n'}$ then we will have proven theorem \ref{mek}.

The following relations can be derived from the relations on ${M_n'}$ and will be useful:
\begin{itemize}
\item $r_i l_i= l_{i-1}r_{i-1}=p_i$
\item $r_i = p_i r_i=r_i p_{i+1}$
\item $l_i = p_{i+1} l_i = l_i p_i$
\item $p_i l_i= p_{i+1}r_i=p_i p_{i+1}$
\item $t_ir_i=t_il_i=t_ip_i=t_ip_{i+1}=t_ip_ip_{i+1}$
\item $r_it_i=l_it_i=p_it_i=p_{i+1}t_i=p_ip_{i+1}t_i$
\end{itemize}
}
In order to prove that every element of ${M_n'}$ is equivalent to a standard word, note that the identity is a standard word.  Next, given a standard word $W$ and a generator $x_i \in \{ r_i$, $ l_i$, $ t_i\}$ we show that our relations are sufficient standardize the product $Wx_i$.  In order to do this we proceed in three steps: First we get the word $Wx_i$ into the form $P_1TP_2$, known as $PTP$ form, where $P_1, P_2 \in P_n$ and $T \in TL_n$.  Next we manipulate $PTP$ form in order to obtain minimal $RTL$ form where $R \in RP_n, L \in LP_n, T \in TL_n$.  After this, we manipulate the Temperley-Lieb diagram to obtain standard form.\\

\subsubsection{PTP form}
\begin{lemma}\label{rtlx to ptp} If $RTL$ is a word in standard form and $x_i \in \{r_i, l_i, t_i\}$, then $RTL x_i$ is equal to a word in $PTP$ form.
\end{lemma}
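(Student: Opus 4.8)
The plan is to split into cases according to which generator $x_i\in\{r_i,l_i,t_i\}$ is appended, noting first that $RTL$ is itself already in $PTP$ form because $R\in RP_n\subseteq P_n$, $L\in LP_n\subseteq P_n$ and $T\in TL_n$. If $x_i=r_i$ or $x_i=l_i$, the claim is immediate: by the definition of a standard word $L$ is a product of the letters $p_j$ and $l_j$, so $Lx_i$ is a product of letters from $\{p_j,l_j,x_i\}$, and since $p_j=r_jl_j$ each such letter represents an element of $P_n$; hence $RTLx_i=R\,T\,(Lx_i)$ with $R,Lx_i\in P_n$ and $T\in TL_n$, which is $PTP$ form.

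The case $x_i=t_i$ is the substance of the lemma. Here $RTLt_i=R\,T\,(Lt_i)$, and the idea is to migrate the newly appended $t_i$ leftward through $L$ until it reaches $T$, the price being a few planar-rook letters that get absorbed to the right. Writing $L$ out through the standard-word formula as $L=(p_{j+1}\cdots p_n)\,L^{j}_{b_j}\cdots L^{1}_{b_1}$, each factor $L^{p}_{b_p}=l_pl_{p+1}\cdots l_{b_p-1}$ being a left-increasing run of $l$'s, I push the trailing $t_i$ left one letter at a time. Against a letter $l_k$ or $p_k$ with $|i-k|\ge2$ it simply commutes (relation (6) of Theorem~\ref{mek}, together with $p_k=r_kl_k$). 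When the indices are close I apply relations 9(b), 10(b), 11(b) and 12 — in shapes such as $l_it_i=r_it_i=p_it_i$, $\,l_{i+1}t_i=r_it_{i+1}t_i$ and $\,l_il_{i-1}t_i=t_{i-1}l_il_{i-1}$ — which either swallow the travelling $t$ into $p$-letters or replace it by a Temperley--Lieb letter of a shifted index plus a bounded number of new $r$-letters, the shifts being kept under control by the rigid shape of a standard $L$ (namely $\tau(L)=\{1,\dots,|\beta(d)|\}$ is an initial segment and the runs are increasing). Once the $t$'s sit to the left of everything coming from $L$, this rewrites $Lt_i$ as $P'\,T'\,P''$ with $P',P''\in P_n$ and $T'\in TL_n$, so $RTLt_i=R\,T\,P'\,T'\,P''$; a last step rewrites $T\,P'$ into $\widetilde P\,\widetilde T$ with $\widetilde P\in P_n$ and $\widetilde T\in TL_n$ (pushing the $r$-letters of $P'$ to the left of $T$ using relations 10 and 11, and letting the empty-vertex letters be absorbed by $R$ and $\widetilde P$), giving $RTLt_i=(R\widetilde P)\,\widetilde T\,P''$ in $PTP$ form.

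I expect the main obstacle to be exactly this interaction between $T$ and the planar-rook material produced by the migration: a Temperley--Lieb word times a planar-rook word is not in general expressible as a Temperley--Lieb word times a planar-rook word (empty top vertices obstruct this), so one must check carefully that the particular letters created here can be commuted or absorbed leftward past $T$ using the available relations, and that throughout the procedure the index of the travelling $t$ and of the newly created $r$-letters never leaves the range in which relations 9--12 are valid — this is where the initial-segment shape of $\tau(L)$, and the analogous structure of $R$ and $T$ in a standard word, are genuinely used. Everything else is a routine verification of relations against diagrams.
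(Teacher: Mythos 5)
Your handling of $x_i\in\{r_i,l_i\}$ and the observation that a standard word is already in $PTP$ form are fine and agree with the paper. The gap is in the $t_i$ case, and it is concentrated in your final step: after migrating $t_i$ through $L$ you arrive at $R\,T\,P'\,T'\,P''$ and propose to rewrite $T P'$ as $\widetilde P\,\widetilde T$ with $\widetilde P\in P_n$ and $\widetilde T\in TL_n$. That rewriting is false in general, not merely delicate. Already in $M_2$ the diagram $t_1 p_1$ (a horizontal edge on top, both bottom vertices empty) cannot equal any product $\widetilde P\,\widetilde T$: the bottom row of $\widetilde P\,t_1$ always retains the bottom horizontal edge of $t_1$, while $\widetilde P\cdot 1$ has no horizontal edge at all; one is forced into the three-factor shape $t_1(p_1p_2)$ with planar-rook letters on the \emph{right}. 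So your last step would at best return a word with two Temperley--Lieb blocks separated by planar-rook letters, and nothing in the proposal shows that this loop terminates. You flag this interaction as ``the main obstacle'' to be ``checked carefully,'' but that check \emph{is} the lemma: commuting planar-rook letters past a Temperley--Lieb word is exactly what the paper's later machinery (hop, slide, burrow, wallslide, fuse wire, and the ordering on $P_n$ in Theorem \ref{algorithm}) exists to do, and Lemma \ref{rtlx to ptp} is deliberately proved without invoking any of it. The migration itself is also underspecified --- for instance $l_it_i=r_it_i$ does not advance $t_i$ leftward at all, it only converts a letter in place --- so no invariant guaranteeing progress is established.

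The paper sidesteps the obstruction with a different induction: reverse induction on the index $i$ of the appended $t_i$. The base case $t_{n-1}$ splits on whether $(n-1)',n'\in\beta(L)$; when both are occupied, the relation $l_{p+1}l_pt_{p+1}=t_pl_{p+1}l_p$ passes $t_{n-1}$ cleanly through $L$ to give $Lt_{n-1}=t_jL$, and when they are empty, the standard form of the whole word forces $T$ to end in $t_{n-1}$ or $t_{n-2}$ according to $\beta(R)$, so explicit identities such as $t_{n-1}p_{n-1}p_nt_{n-1}=t_{n-1}$ and $t_{n-2}p_{n-1}p_nt_{n-1}=t_{n-2}l_{n-1}l_{n-2}$ finish each subcase with no general commutation past $T$. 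The inductive step trades $t_i$ for $t_{i+1}$ via $p_ip_{i+1}p_{i+2}t_i=p_ip_{i+1}p_{i+2}t_{i+1}r_ir_{i+1}$, depositing the new planar-rook letters harmlessly at the far right where they already sit in the $P_2$ slot. To salvage your letter-by-letter migration you would need a genuine normal-form statement for the specific words $TP'$ your process produces --- at which point you will have reconstructed most of Theorem \ref{algorithm}, which is far more than this lemma requires.
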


\begin{proof}
Notice that appending an $ r_i$ or an $ l_i$ yields a word already in $PTP$ form, so it suffices to show that  $RTL\cdot t_i$ is equivalent to a word in $PTP$ form.  We will proceed using reverse induction on $i$.\\

For the base case we append $t_{n-1}$.  There are several cases to consider depending on what the bottom set of $L$ is:\\

Case 1: $(n-1)', n' \in \beta (L)$.\\
Because $d$ is in standard form, there exist integers $j$ and $k$, with $j\leq i$ and $j\leq k \leq n$ such that $L = L^{k,b_k}L^{k-1,b_{k-1}}\dots L^{j+1,i+1}L^{j,i}\dots L^{1,b_1}$. For simplicity, let $L_0=L^{k,b_k}L^{k-1,b_{k-1}}\dots L^{j+2,b_{j+2}}$. Let $L_1=L^{j-1,b_{j-1}}L^{j-2,b_{j-2}}\dots L^{1,b_1}.$
Thus $$L=L_0 (l_{j+1} l_{j+2} \dots l_i) (l_j l_{j+1} \dots l_{i-1})L_1.$$
By the relation $l_pl_q=l_ql_p$ for $|p-q|\geq2$, we rearrange the terms to obtain
$L=L_0 (l_{j+1} l_j) (l_{j+2}l_{j+1})\dots (l_i l_{i-1}) L_1$.

The indices of all the components of $L_1$ are strictly less than $b_{j-1}$, which is in turn strictly less than $i$, implying that the highest index of any of $L_1$'s factors is less than or equal to $i-2$. Thus, $t_i$ can commute past all these elements. Thus,
$$Lt_i=L_0(l_{j+1} l_j) (l_{j+2}l_{j+1})\dots (l_i l_{i-1}) t_i L_1.$$
By repeatedly applying the relation $l_{p+1}l_pt_{p+1}=t_pl_{p+1}l_p$ we obtain
\begin{eqnarray*}
Lt_i&=&L_0(l_{j+1} l_j) (l_{j+2}l_{j+1})\dots l_{i-1} l_{i-2} t_{i-1}(l_i l_{i-1}) L_1\\
&=&L_0(l_{j+1} l_j) (l_{j+2}l_{j+1})\dots (l_{i-2} l_{i-3}) t_{i-2} (l_{i-1} l_{i-2}) (l_i l_{i-1})L_1\\
&\vdots&\\
&=&L_0 t_j(l_{j+1} l_j) (l_{j+2}l_{j+1})\dots (l_i l_{i-1})L_1
\end{eqnarray*}
All the factors of $L_1$ have index greater than or equal to $j+2$, implying that $t_j$ commutes with $L_1$. Therefore $Lt_i=t_jL$. Thus $dt_i=RTLt_i=RT'L$.\\

Case 2: $(n-1)', n' \not\in \beta( L)$.

Since the $(n-1)'$ and $n'$ vertices are empty, we have that $ L= L_0  p_{n-1} p_n$ where $ L_0 \in\  \langle l_1, \ldots, l_{n-3}, p_1, \ldots, p_{n-2}\rangle$.
We have then,

$$R T L  t_{n-1} = R T L_0 p_{n-1} p_n t_{n-1} = R T p_{n-1} p_n t_{n-1} L_0.$$

Now to get to $PTP$ form we need to move $p_{n-1} p_n$.  Note $n',(n-1)' \notin \beta(L)$, so $n, n-1 \notin \tau(L)$.  Since $RTL$ is in standard form, this tells us something about $T$.  There are three subcases depending on $\beta(R)$.

Subcase 1: ${(n-1)', n' \not\in\beta(R)}$.



Then $T \in\  \langle t_1, \dots, t_{n-3} \rangle.$  In this case every element of $T$ commutes with $p_{n-1} p_n t_{n-1}$, and so $$RTLt_{n-1}= RT p_{n-1} p_n t_{n-1}L_0 = R p_{n-1} p_n (t_{n-1}T) L_0 $$

Subcase 2: ${(n-1)',n' \in\beta(R)}$.

Then $T$ can be written in the form $T_0t_{n-1}$, where $T_0 \in \langle t_1,\ldots,t_{n-2} \rangle$.  So $T= T_0 t_{n-1}.$  Then $$RTL t_{n-1} = R T_0 t_{n-1} p_{n-1} p_n t_{n-1} L_0 = RT_0 t_{n-1} L_0$$




Subcase 3: ${(n-1)' \in\beta(R), n' \not\in\beta(R)}$.

Then $T$ can be written as $T=T_0t_{n-2}$, where $T_0 \in \langle t_1,\ldots,t_{n-2} \rangle$. By our relations we get, $$RTLt_{n-1}=RT_0 t_{n-2} p_{n-1} p_n t_{n-1} L_0= RT_0 t_{n-2} l_{n-1} l_{n-2} L_0$$




In all three subcases, $RTLt_{n-1}$ is equal to a word in $PTP$ form.\\

Case 3: $(n-1)'\in\beta( L), n'\not\in\beta(L)$.

Since the $n$th vertex in $ L$ is empty, $ L= L p_n$.  Using our relations, this implies that $R T L t_{n-1}= R T L p_n t_{n-1}= R T L p_{n-1} p_n t_{n-1}.$ Now we are reduced to case 2.\\

Case 4: $(n-1)'\not\in\beta( L), n'\in\beta( L)$.
This case can be reduced to case 2 using analogous reasoning.\\

In conclusion, we have shown that if $RTL \in M_n'$ is any word in standard form, then $RTLt_{n-1}$ is equivalent to a word in $PTP$ form. We now move on to the inductive step to show that this holds for any $t_i$.  Assume that for $j=i+1, i+2,...,n-1$ $ R T L t_j$ can be put into $PTP$ form. We now show that this assumption implies that $RTL t_i$ can be put into $PTP$ form. Consider the following three cases:\\

Case 1: $i', i'+1, i'+2 \notin\beta( L)$

Since $i, i+1, i+2 \notin\beta( L)$ we can write $L$ as $ L= L p_i p_{i+1} p_{i+2}$. Thus,
\begin{eqnarray*}
R T L t_i &=& R T L p_i p_{i+1} p_{i+2} t_i\\
&=& R T L p_i p_{i+1} p_{i+2} t_{i+1} r_i  r_{i+1}\\
&=& ( R T L p_i p_{i+1} p_{i+2} t_{i+1}) r_i  r_{i+1}\\
&=& RTLt_{i+1}r_ir_{i+1}.
\end{eqnarray*}
The factor $RTLt_{i+1}$ right can be put into $PTP$ form, by the inductive hypothesis. Thus we have $RTLt_i=P_1T'P_2r_i r_{i+1}$, which is in $PTP$ form.\\

Case 2: $i', i'+1 \in \beta (L)$

We know that $L$ can we written as $L= l_i l_{i-1} L_0$ where $r_0 \in \langle l_1 \ldots l_{i-2} \rangle$.  So,
\begin{eqnarray*}
RTL t_i &=& RTl_i l_{i-1} L_0 t_i\\
&=& RT l_i l_{i-1} t_i L_0\\
&=& RTl_i r_i t_{i-1} l_i l_{i-1} L_0\\
&=&RT p_{i+1} t_{i-1} l_i l_{i-1} L_0\\
&=&RT t_{i-1}p_{i+1} l_i l_{i-1} L_0.
\end{eqnarray*}

Case 3: ${i', i'+1 \notin\beta( L), i'+2 \in\beta( L)}$

Since $L$ is in standard form, and $i',i'+1 \notin\beta(L)$, we know that we cannot have a vertical line from $i+2$ to $i'+2$.  Furthermore, $i',i'+1 \notin\beta(L)$ allows us to write $L$ equivalently as $L= L_0  l_i  l_{i+1}$ (because necessarily $p_i, p_{i+1}$ are in our product $L_0$), where $ L_0$ is defined by $\tau( L_0)=\tau( L),$ and$ \beta( L_0)=(\beta( L)\setminus\{i+2\})\cup\{i\}$. So we have,
 \begin{eqnarray*}
 R T L t_i &=& R T L_0 l_i l_{i+1} t_i\\
 &=& ( R T L_0 p_{i+1} p_{i+2} t_{i+1}) t_i.
 \end{eqnarray*}
 The factor in parenthesis can be put into $P_1TP_2$ form, by the inductive hypothesis. Thus $ R T L t_i= P_1 T' P_2 t_i$. Note that $\beta(P_2)$ contains $i, i+1, \textrm{ and } i+2$. So $P_1T'P_2 t_i$ can be put into standard form, by case 2.\\

Case 4: $i\in\beta(L), i+1\notin\beta(L)$

Note that in this case $L=Lp_{i+1}$.  Using our relations we have, $RTLt_i=RTLp_{i+1}t_i=RTLp_{i+1}p_it_i$. Note that $i$ and $i+1$ are not in $\beta(Lp_{i+1}p_i)$, implying that this word can be standardized by the previous case.\\

Case 5: $i \notin\beta(L), i+1\in\beta(L)$

This case is analogous to case 4.\\
\end{proof}

\subsubsection{Minimal $RTL$ form}

Rewriting a word in $PTP$ form as a word in standard form is analogous to taking a Motzkin diagram drawn as the product of a Temperley-Lieb diagram and two planar rook diagrams and redrawing it so that the empty vertices in the bottom of the top diagram and the empty vertices in the top of the bottom diagram are all on the far right. This motivates the following lemmas, each of which concerns words in $M_n'$ but corresponds to shifting empty vertices rightward in diagrams in $M_n$.

\begin{lemma}[Hop]\label{hopping}
Let $i$ and $k$ be natural numbers.  If $i<k$ and $k-i$ is even then, 
$$ (t_{k-1} t_{k-3} \ldots t_{i+1}) p_i =  (t_{k-1} t_{k-3} \ldots t_{i+1}) (t_{k-2} t_{k-4} \ldots t_{i})(l_{k-1}l_{k-2} \ldots l_i).$$
\end{lemma}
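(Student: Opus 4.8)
The plan is to induct on $m:=(k-i)/2\ge 1$, the number of factors in the leading block $t_{k-1}t_{k-3}\cdots t_{i+1}$; throughout, $p_j$ abbreviates the word $r_jl_j$. The base case $m=1$ is the identity $t_{i+1}p_i=t_{i+1}t_il_{i+1}l_i$, which I would establish by reducing each side separately. For the right side, relation 11b ($t_jl_{j+1}l_j=l_{j+1}l_jt_{j+1}$) followed by the derived relations $t_{i+1}l_{i+1}=t_{i+1}p_{i+1}$ and $p_{i+1}l_i=l_i$ gives $t_{i+1}t_il_{i+1}l_i=t_{i+1}l_it_{i+1}$. For the left side, write $p_i=r_il_i$ and use the auxiliary identity $t_{i+1}r_i=r_ir_{i+1}t_il_{i+1}$ — deduced from $r_i=r_ir_{i+1}l_{i+1}$, relation 11a, and the derived relations — then 11b and a couple of derived relations, to obtain $t_{i+1}p_i=p_it_{i+1}$. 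Finally $t_{i+1}l_it_{i+1}=p_it_{i+1}$ as well: apply the mirror of the auxiliary identity, $l_it_{i+1}=r_{i+1}t_il_{i+1}l_i$ (valid because the order‑reversing map $r_j\leftrightarrow l_j,\ t_j\mapsto t_j$ sends every relation to a relation or to a consequence of one — e.g. the image of relation 12 is $t_ir_it_i=t_i$, which follows from 9a and 12 — so it is an antiautomorphism of $M_n'$), then relations 11b, 2a, 12 and several derived identities (among them $l_ip_{i+1}=p_ip_{i+1}$, a short consequence of relations 1, 3a and the derived relations). Chaining these yields $t_{i+1}p_i=p_it_{i+1}=t_{i+1}l_it_{i+1}=t_{i+1}t_il_{i+1}l_i$.

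For the inductive step, assume the lemma for the pair $(i,k-2)$ and set $A_0=t_{k-3}t_{k-5}\cdots t_{i+1}$, $B_1=t_{k-4}t_{k-6}\cdots t_i$, $C_1=l_{k-3}l_{k-4}\cdots l_i$, so the hypothesis reads $A_0p_i=A_0B_1C_1$. Since $k-i\ge 4$, the index $k-1$ differs by at least $2$ from every index occurring in $A_0$, $p_i$, $B_1$, or $C_1$, so by relation 6 the letter $t_{k-1}$ commutes past all of these. Hence the left side of the lemma is $(t_{k-1}A_0)p_i=A_0p_it_{k-1}$. For the right side, peel the leading letters as $(t_{k-1}A_0)(t_{k-2}B_1)(l_{k-1}l_{k-2}C_1)$, slide $t_{k-1}$ to the right of $A_0$, commute $t_{k-2}$ and then $t_{k-1}$ past $B_1$, leaving $A_0B_1\cdot t_{k-1}t_{k-2}l_{k-1}l_{k-2}\cdot C_1$. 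Now relation 11b at index $k-2$, then $t_{k-1}l_{k-1}l_{k-2}=t_{k-1}p_{k-1}l_{k-2}=t_{k-1}l_{k-2}$, then the base case at index $k-2$ (i.e. $t_{k-1}l_{k-2}t_{k-1}=t_{k-1}p_{k-2}$) turn the middle piece into $t_{k-1}p_{k-2}$; since $p_{k-2}l_{k-3}=l_{k-3}$ one has $p_{k-2}C_1=C_1$, and commuting $t_{k-1}$ to the end gives $A_0B_1C_1t_{k-1}=A_0p_it_{k-1}$ by the inductive hypothesis. Thus both sides equal $A_0p_it_{k-1}$, and the induction closes.

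The main obstacle is bookkeeping rather than ideas: one must find the right intermediate words — the reduction of the base case to ``$p_it_{i+1}=t_{i+1}l_it_{i+1}$'', the auxiliary identity $t_{i+1}r_i=r_ir_{i+1}t_il_{i+1}$ together with its mirror, and, in the inductive step, the maneuver of sliding the new leading cup $t_{k-1}$ all the way to the right so that the inductive hypothesis can be invoked — and then check at each step that the ``$|i-j|\ge 2$'' separations needed to use relation 6 genuinely hold. Once those intermediate forms are in hand, every remaining step is a direct application of a stated or previously derived relation.
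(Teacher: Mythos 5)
Your induction (on $m=(k-i)/2$, equivalently on $k$ in steps of $2$) is the same induction the paper runs, and your inductive step is correct and complete: commuting $t_{k-1}$ past everything of index at most $k-3$, invoking the base case on the block $t_{k-1}t_{k-2}l_{k-1}l_{k-2}$, absorbing $p_{k-2}$ into $C_1$ via $p_{k-2}l_{k-3}=l_{k-3}$, and closing with the inductive hypothesis is exactly the paper's mechanism, just organized as ``reduce both sides to $A_0p_it_{k-1}$'' instead of transforming the left side into the right.

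The one place you attempt more than the paper is the base case $t_{i+1}p_i=t_{i+1}t_il_{i+1}l_i$, which the paper simply asserts as ``the relation'' (it is not literally among the listed relations, so some derivation is genuinely owed). Most of your intermediate identities check out: $t_{i+1}t_il_{i+1}l_i=t_{i+1}l_it_{i+1}$ via 11b, $t_{i+1}l_{i+1}=t_{i+1}p_{i+1}$ and $p_{i+1}l_i=l_i$; the auxiliary identity $t_{i+1}r_i=r_ir_{i+1}t_il_{i+1}$ via $r_i=r_ir_{i+1}l_{i+1}$ and 11a; its mirror $l_it_{i+1}=r_{i+1}t_il_{i+1}l_i$ via $l_i=r_{i+1}l_{i+1}l_i$ and 11b; and hence $t_{i+1}p_i=p_it_{i+1}$. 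But the final link, $t_{i+1}l_it_{i+1}=p_it_{i+1}$, is only gestured at, and the moves you name do not close it: substituting the mirror identity gives $t_{i+1}r_{i+1}t_il_{i+1}l_i$, and every application of 9, 11a, 11b, $l_{i+1}r_{i+1}=p_{i+2}$, $t_{i+1}p_{i+1}=t_{i+1}p_{i+2}$, $l_{i+1}^2=p_{i+1}p_{i+2}$, etc.\ that I can see returns you to $t_{i+1}l_it_{i+1}$ or to $p_it_{i+1}$ without ever crossing between them --- relations 11a/11b shuttle $p_i$ from one side of $t_{i+1}$ to the other but never convert $t_{i+1}l_it_{i+1}$ into $t_{i+1}p_i$. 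The identity is certainly true (one checks it on diagrams, and relation 10a together with relation 8 gives the needed companion fact $t_{i+1}l_i=t_{i+1}t_ir_{i+1}$, whose ``other-handed'' version $t_{i+1}r_i=t_{i+1}t_il_{i+1}$ is what the base case really amounts to), but as written your chain has a missing step; you need to exhibit an explicit word-by-word derivation of $t_{i+1}l_it_{i+1}=t_{i+1}p_i$, or else treat the base case as a verified diagram identity the way the paper implicitly does.
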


Intuitively we are ``hopping" a dead end in the $i$th vertex over a series of horizontal edges to the $k$th vertex as shown below: 
$$
{\beginpicture
\setcoordinatesystem units <0.5cm,0.3cm>
\setplotarea x from 1 to 8, y from 0 to 2
\put{$\rightarrow$} at 6.6 -3
\put{$\bullet$} at 1 1 
\put{$\bullet$} at 2 1
\put{$\bullet$} at 3 1
\put{$\bullet$} at 4 1
\put{$\bullet$} at 5 1
\put{$\bullet$} at 1 -3
\put{$\bullet$} at 2 -3
\put{$\bullet$} at 3 -3
\put{$\bullet$} at 4 -3
\put{$\bullet$} at 5 -3
\put{$\bullet$} at 1 -7
\put{$\bullet$} at 2 -7
\put{$\bullet$} at 3 -7
\put{$\bullet$} at 4 -7
\put{$\bullet$} at 5 -7
\plot 1 1 1 -3 /
\plot 2 -3 2 -7 /
\plot 3 -3 3 -7 /
\plot 4 -3 4 -7 /
\plot 5 -3 5 -7 /
\setquadratic
\plot 2 -3 2.5 -2 3 -3 /
\plot 4 -3 4.5 -2 5 -3 /
\plot 2 1 2.5 0 3 1 /
\plot 4 1 4.5 0 5 1 /
\put{$\scriptstyle{i\,}$} at 1 -8
\put{$\scriptstyle{k\,}$} at 5 -8
\endpicture}
{\beginpicture
\setcoordinatesystem units <0.5cm,0.3cm>
\setplotarea x from 1 to 5, y from 0 to 2
\put{$\bullet$} at 1 1 
\put{$\bullet$} at 2 1
\put{$\bullet$} at 3 1
\put{$\bullet$} at 4 1
\put{$\bullet$} at 5 1
\put{$\bullet$} at 1 -3
\put{$\bullet$} at 2 -3
\put{$\bullet$} at 3 -3
\put{$\bullet$} at 4 -3
\put{$\bullet$} at 5 -3
\put{$\scriptstyle{i\,}$} at 1 -8
\put{$\scriptstyle{k\,}$} at 5 -8
\plot 1 -3 2 -7 /
\plot 2 -3 3 -7 /
\plot 3 -3 4 -7 /
\plot 4 -3 5 -7 /
\setquadratic
\plot 1 1 2 -.5 3 -1 /
\plot 3 -1 4 -1.5 5 -3 /
\plot 1 -3 1.5 -2 2 -3 /
\plot 3 -3 3.5 -2 4 -3 /
\plot 2 1 2.5 0 3 1 /
\plot 4 1 4.5 0 5 1 /
\put{$\bullet$} at  1 -7
\put{$\bullet$} at  2 -7
\put{$\bullet$} at  3 -7
\put{$\bullet$} at  4 -7
\put{$\bullet$} at  5 -7
\endpicture}
$$

\begin{proof} We prove Lemma \ref{hopping} by induction on $k$.\\ 

\noindent The case $k=i+2$ is simply the relation $t_{i+1}p_i = t_{i+1}t_il_{i+1} l_i$.\\ 

\noindent Assume the result holds for $k-2$: $$(t_{k-3} t_{k-5} \ldots t_{i+1}) p_i =  (t_{k-3} t_{k-5} \ldots t_{i+1}) (t_{k-4} t_{k-6} \ldots t_{i})(l_{k-3}l_{k-4} \ldots l_i)$$
Then \begin{eqnarray*}
&& (t_{k-1} t_{k-3} \ldots t_{i+3} t_{i+1}) p_i \\ 
&=& t_{k-1}(t_{k-3} t_{k-5} \ldots t_{i+1}) (t_{k-4} t_{k-6} \ldots t_{i})(l_{k-3}l_{k-4} \ldots l_i) \ (\textrm{inductive hypothesis}) \\ 
&=& t_{k-1} (t_{k-3} t_{k-5} \ldots t_{i+1}) (t_{k-4} t_{k-6} \ldots t_{i}) p_{k-2} (l_{k-3}l_{k-4} \ldots l_i) \ (l_i= p_{i+1}l_i) \\ 
&=& (t_{k-3} t_{k-5} \ldots t_{i+1}) (t_{k-4} t_{k-6} \ldots t_{i})t_{k-1} p_{k-2} (l_{k-3}l_{k-4} \ldots l_i) \ (\textrm{commuting}\  t_{k-1})\\ 
&=& (t_{k-3} t_{k-5} \ldots t_{i+1}) (t_{k-4} t_{k-6} \ldots t_{i})t_{k-1} t_{k-2} l_{k-1}l_{k-2}(l_{k-3}l_{k-4} \ldots l_i)\ (\textrm{base case}) \\ 
&=& (t_{k-1} t_{k-3} \ldots t_{i+1}) (t_{k-2} t_{k-4} \ldots t_{i})(l_{k-1}l_{k-2} \ldots l_i)\  (\textrm{commuting } t_{k-1} \textrm{ and } t_{k-2}).
\end{eqnarray*}

\end{proof}

\begin{lemma}[Slide]\label{slide}
If $T \in \langle t_{i+1}, t_{i+2} \ldots , t_{k-1} \rangle$, then 
$$(l_{k-1} l_{k-2} \ldots l_i) T = T' (l_{k-1} l_{k-2} \ldots l_i)$$
for some $T' \in \, \langle t_i, t_{i+1} \ldots t_{k-2} \rangle.$ 
\end{lemma}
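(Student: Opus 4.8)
The plan is to reduce the claim to the case where $T$ is a single Temperley--Lieb generator, and then to drag that generator leftward through the chain $L := l_{k-1}l_{k-2}\cdots l_i$ one letter at a time, the essential move being relation (11b).

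\emph{Reduction.} First I would prove, by induction on the number of letters of $T$, that it suffices to establish the single-generator identity $L\,t_m = t_{m-1}\,L$ for every $m$ with $i+1\le m\le k-1$. Indeed, if $T = t_{m_1}t_{m_2}\cdots t_{m_r}$ with each $m_j\in\{i+1,\dots,k-1\}$, then applying this identity repeatedly gives $LT = (Lt_{m_1})t_{m_2}\cdots t_{m_r} = t_{m_1-1}(Lt_{m_2})t_{m_3}\cdots t_{m_r} = \cdots = t_{m_1-1}t_{m_2-1}\cdots t_{m_r-1}\,L$. Each $m_j-1$ lies in $\{i,\dots,k-2\}$, so $T' := t_{m_1-1}\cdots t_{m_r-1}\in\langle t_i,\dots,t_{k-2}\rangle$, as required (the empty word $T$ being trivial).

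\emph{Single generator.} Here I would split $L = (l_{k-1}\cdots l_{m+1})\,(l_m l_{m-1})\,(l_{m-2}\cdots l_i)$, which is legitimate since $i\le m-1$ and $m\le k-1$, so $l_m$ and $l_{m-1}$ occur as consecutive factors. Because $t_m$ commutes with each of $l_{m-2},\dots,l_i$ (indices differing by at least $2$, relation (6)), we have $L t_m = (l_{k-1}\cdots l_{m+1})\,l_m l_{m-1} t_m\,(l_{m-2}\cdots l_i)$. Now relation (11b), $t_i l_{i+1}l_i = l_{i+1}l_i t_{i+1}$, used with index $m-1$, says $l_m l_{m-1} t_m = t_{m-1} l_m l_{m-1}$, so $L t_m = (l_{k-1}\cdots l_{m+1})\,t_{m-1}\,l_m l_{m-1}(l_{m-2}\cdots l_i)$. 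Finally $t_{m-1}$ commutes leftward past $l_{m+1},l_{m+2},\dots,l_{k-1}$ (again indices at least $2$ apart), yielding $L t_m = t_{m-1}\,l_{k-1}\cdots l_i = t_{m-1}L$. The extreme cases $m=i+1$ (the block $l_{m-2}\cdots l_i$ is empty) and $m=k-1$ (there are no factors $l_{m+1},\dots,l_{k-1}$, so the final commutation is vacuous) are covered verbatim by this computation.

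I do not expect a real obstacle: the computation is short once relation (11b) is recognized as the tool that simultaneously carries $t_m$ across the adjacent pair $l_m l_{m-1}$ and decrements its index, everything else being commutation via relation (6). The only points needing care are that $l_m$ and $l_{m-1}$ genuinely sit as consecutive factors of $L$, that (11b) is invoked at index $m-1$ rather than $m$, and that the index shift $m\mapsto m-1$ keeps the resulting Temperley--Lieb word inside $\langle t_i,\dots,t_{k-2}\rangle$ --- all immediate from the range $i+1\le m\le k-1$.
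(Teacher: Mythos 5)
Your proposal is correct and follows essentially the same route as the paper: both reduce to the single-generator identity $L t_m = t_{m-1} L$ by induction on the length of $T$, and both establish that identity by commuting $t_m$ down to the adjacent pair $l_m l_{m-1}$, applying relation (11b) at index $m-1$, and commuting $t_{m-1}$ back out. No gaps; your explicit attention to the boundary cases $m=i+1$ and $m=k-1$ is a small improvement in care over the paper's write-up.
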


Diagrammatically we are ``sliding" the product of $l$'s under the product of $t$'s, as shown below:

$$
{\beginpicture
\setcoordinatesystem units <0.5cm,0.3cm>
\setplotarea x from 0 to 8, y from -1 to -1
\put{$\bullet$} at  1 2 
\put{$\bullet$} at  2 2
\put{$\bullet$} at  3 2
\put{$\bullet$} at  4 2
\put{$\bullet$} at 5 2
\put{$\begin{picture}(50,50)
\put(3,0){\framebox(58,25){$T$}}
\end{picture}$} at 3.5 -1
\put{$\bullet$} at 1 -7 \put{$\bullet$} at 1 -4 \put{$\bullet$} at  1 -1 
\put{$\bullet$} at 2 -7 \put{$\bullet$} at 2 -4 \put{$\bullet$} at  2 -1
\put{$\bullet$} at 3 -7 \put{$\bullet$} at 3 -4 \put{$\bullet$} at  3 -1
\put{$\bullet$} at 4 -7 \put{$\bullet$} at 4 -4 \put{$\bullet$} at 4 -1
\put{$\bullet$} at 5 -7 \put{$\bullet$} at 5 -4  \put{$\bullet$} at  5 -1
 \put{$\bullet$} at 6 -7 \put{$\bullet$} at 6 -4  \put{$\bullet$} at  6 -1 \put{$\bullet$} at  6 2
\plot 2 -4 2 -7 /
\plot 3 -4 3 -7 /
\plot 4 -4 4 -7 /
\plot 5 -4 5 -7 /
\plot 6 -4 6 -7 /
\plot 1 -1 1 -7 /
\plot 1 2 2 -1 /
\plot 2 2 3 -1 /
\plot 3 2 4 -1 /
\plot 4 2 5 -1 /
\plot 5 2 6 -1 /
\setquadratic

\put{$\scriptstyle{i\,}$} at 1 -8 
\put{$\scriptstyle{k\,}$} at 6 -8
\endpicture}
{\beginpicture
\setcoordinatesystem units <0.5cm,0.3cm>
\setplotarea x from 0 to 8, y from -1 to -1
\put{$\bullet$} at  1 2 
\put{$\bullet$} at  2 2
\put{$\bullet$} at  4 2
\put{$\bullet$} at 5 2
\put{$\begin{picture}(50,50)
\put(1.5,0){\framebox(57.5,25){$T'$}}
\end{picture}$} at 2.65 -1
\put{$=$} at -1 -2.5
\put{$\bullet$} at 1 -7 \put{$\bullet$} at 1 -4 \put{$\bullet$} at  1 -1 
\put{$\bullet$} at 2 -7 \put{$\bullet$} at 2 -4 \put{$\bullet$} at  2 -1
\put{$\bullet$} at 3 -7 \put{$\bullet$} at 3 2 \put{$\bullet$} at  3 -1  \put{$\bullet$} at 3 -4
\put{$\bullet$} at 4 -1 \put{$\bullet$} at 4 -4 \put{$\bullet$} at 4 -7
\put{$\bullet$} at 5 -7 \put{$\bullet$} at 5 -4  \put{$\bullet$} at  5 -1
 \put{$\bullet$} at 6 -7 \put{$\bullet$} at 6 -4  \put{$\bullet$} at  6 -1 \put{$\bullet$} at  6 2
\plot 1 2 1 -1 /
\plot 2 2 2 -1 /
\plot 3 2 3 -1 /
\plot 4 2 4 -1 /
\plot 5 2 5 -1 /
\plot 6 2 6 -4 /
\plot 1 -4 2 -7 /
\plot 2 -4 3 -7 /
\plot 3 -4 4 -7 /
\plot 4 -4 5 -7 /
\plot 5 -4 6 -7 /

\setquadratic

\put{$\scriptstyle{i\,}$} at 1 -8 
\put{$\scriptstyle{k\,}$} at 6 -8
\endpicture}
$$

\begin{proof}
We will prove lemma \ref{slide} using induction on the length of the word $T$.  \\

\noindent Base case: For our base case we want to show that $l_{k-1} \ldots l_i t_j = t_{j-1} l_{k-1} \ldots l_i$ for any $t_j \in TL_n$.  Since $j \in [i+1, k-1]$ then there exists an $l_{j-1}$ in our product of $l$'s.  We will commute $t_j$ up to this point and then use our relations as follows:

\begin{eqnarray*}
l_{k-1} \ldots l_i t_j &=& l_{k-1} \ldots l_{j-1} \ldots l_i t_j\\ 
&=& l_{k-1} \ldots l_{j-2} l_{j-1} t_j l_{j-2} \ldots l_i \ (l_i t_j = t_j l_i\  \textrm{if}\  |i-j| \geq 2)\\
&=& l_{k-1} \ldots t_{j-1} l_{j-2} l_{j-1} l_{j-2} \ldots l_i \ (t_i l_{i+1} l_i = l_{i+1} l_i t_{i+1})\\
&=& t_{j-1} l_{k-1} \ldots l_i\ (l_i t_j = t_j l_i\  \textrm{if}\  |i-j| \geq 2).
\end{eqnarray*}

\noindent Inductive step: Now, assume $l_{k-1} \ldots l_i  T = T' l_{k-1} \ldots l_i$ where $T$ and $T'$ are some products of $t$'s as described above.  If we append some $t_m \in TL_n$, we have that:

\begin{eqnarray*}
l_{k-1} \ldots l_i T t_m &=& T' l_{k-1} \ldots l_i t_m \ (\textrm{inductive hypothesis}) \\ 
&=& T' t_{m-1} l_{k-1} \ldots l_{m-1} \ldots l_i\ (\textrm{base case})\\
&=& T'' l_{k-1} \ldots l_i
\end{eqnarray*}

where $T'' \in \, \langle t_i, t_{i+1} \ldots t_{k-2} \rangle.$ So by induction we have that $(l_{k-1} l_{k-2} \ldots l_i) T = T' (l_{k-1} l_{k-2} \ldots l_i)$ as desired.
\end{proof}
   
\begin{lemma}[Burrow]\label{burrow}
Let i be a natural number.  Then, $t_{i-1} p_i = t_{i-1} t_i l_{i-1} l_i.$
\end{lemma}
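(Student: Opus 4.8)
The plan is to obtain the identity from essentially a single application of relation (10a) of Theorem~\ref{mek}, after one bookkeeping substitution. First I would recall, from the list of derived relations established just before the statement, that $p_i = r_i l_i$ (equivalently $p_i = l_{i-1}r_{i-1}$); the first of these two forms is the convenient one here. Substituting it into the left-hand side gives $t_{i-1}p_i = t_{i-1} r_i l_i$.

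Next I would apply relation (10a), which reads $t_j r_{j+1} = t_j t_{j+1} l_j$, with $j = i-1$: this rewrites the prefix $t_{i-1} r_i$ as $t_{i-1} t_i l_{i-1}$. Multiplying on the right by the leftover $l_i$ yields
\[
t_{i-1} p_i = t_{i-1} r_i l_i = t_{i-1} t_i l_{i-1} l_i,
\]
which is exactly the claimed identity. One should note the tacit range constraint: for all of $t_{i-1}$, $t_i$, and $r_i$ to be defined we need $2 \le i \le n-1$, which is precisely the range in which both sides of the asserted equality make sense.

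The point worth flagging is that there is essentially no obstacle once one spots the right substitution: the ``burrow'' move is just relation (10a) in disguise, with $p_i$ expanded as $r_i l_i$ so that the rewrite $t_{i-1} r_i \mapsto t_{i-1}t_i l_{i-1}$ can fire. A less efficient route --- expanding instead $p_i = l_{i-1}r_{i-1}$, using relation (9a) in the form $t_{i-1}l_{i-1} = t_{i-1}r_{i-1}$ to reach $t_{i-1} r_{i-1}^2$, and then re-expanding $r_{i-1}^2$ via relation (1) --- also works but is more circuitous, so I would present the direct derivation above, and (as the authors do elsewhere) remark that the equality can in any case be checked by drawing the two diagram products.
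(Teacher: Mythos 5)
Your proof is correct and is essentially identical to the paper's: both expand $p_i = r_i l_i$ and then apply relation (10a), $t_{i-1}r_i = t_{i-1}t_i l_{i-1}$, to conclude $t_{i-1}p_i = t_{i-1}r_i l_i = t_{i-1}t_i l_{i-1} l_i$. The extra remarks on the index range and the alternative route are fine but not needed.
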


Intuitively, we will use this to allow a horizontal edge to ``burrow under" a neighboring edge. 

$$
{\beginpicture
\setcoordinatesystem units <0.5cm,0.3cm>
\setplotarea x from 1 to 6, y from -4 to 2
\put{$\bullet$} at 1 -4  \put{$\bullet$} at  1 -1  \put{$\bullet$} at  1 2
\put{$\bullet$} at 2 -4  \put{$\bullet$} at  2 -1  \put{$\bullet$} at  2 2
\put{$\bullet$} at 3 -4  \put{$\bullet$} at  3 -1  \put{$\bullet$} at  3 2
\plot 1 -1 1 -4 /
\plot 3 2 3 -1 /
\plot 3 -1 3 -4 /
\setquadratic
\plot 1 2 1.5 1 2 2 /  
\plot 1 -1 1.5 0 2 -1 /

\put{$\scriptstyle{i-1\,}$} at 1 3.0
\put{$\scriptstyle{i\,}$} at 2 3
\endpicture}
\qquad \rightarrow \qquad
{\beginpicture
\setcoordinatesystem units <0.5cm,0.3cm>
\setplotarea x from 1 to 6, y from -1 to 2
\put{$\bullet$} at 1 -4 \put{$\bullet$} at  1 -1  \put{$\bullet$} at  1 2 
\put{$\bullet$} at 2 -4 \put{$\bullet$} at  2 -1  \put{$\bullet$} at  2 2
\put{$\bullet$} at 3 -4 \put{$\bullet$} at  3 -1  \put{$\bullet$} at  3 2
\plot 1 -1 3 -4 /
\setquadratic
\plot 1 2 1.5 1 2 2 /  
\plot 2 -1 2.5 0 3 -1 /
\plot 1 -1 1.4 0 2.5 0.6 /
\plot 2.5 0.6 2.85 1 3 2 /

\put{$\scriptstyle{i-1\,}$} at 1 3.0
\put{$\scriptstyle{i\,}$} at 2 3
\endpicture}
$$

\begin{proof}
The proof of this lemma follows easily from the relations $p_i = r_i l_i$ and $t_i r_{i+1} = t_i t_{i+1} l_i$.  Thus, $t_{i-1} p_i = t_{i-1} r_i l_i = t_{i-1} t_i l_{i-1} l_i.$
\end{proof}

\begin{lemma}[Wallslide]\label{wallslide1}
Let $i$ be a natural number. Let $T\in \langle t_1,\dots,t_{i-1}\rangle$ and $T' \in \langle t_{i+2},\dots,t_{n-1}\rangle$ be arbitrary. Then $T T' p_i = r_i T  T' l_i$.
\end{lemma}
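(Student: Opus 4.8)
The plan is to treat Wallslide as a pure rewriting identity and reduce it to two facts already available: the derived relation $p_i = r_i l_i$, and ``far commutativity'' (relation 6 of Theorem~\ref{mek}), which lets $r_i$ and $l_i$ be pushed past any Temperley--Lieb generator $t_j$ with $|i-j| \ge 2$. Concretely, the idea is to rewrite the $p_i$ on the right-hand side of $TT'p_i$ as $r_i l_i$, leave the $l_i$ in place at the right end, and slide the freshly created $r_i$ all the way to the left, first through $T'$ and then through $T$.

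Carrying this out, I would first isolate the commutation step as a sublemma: for any word $W$ in the generators $t_j$ all of whose indices satisfy $|i-j|\ge 2$, one has $r_iW = Wr_i$ and $l_iW = Wl_i$. This is an immediate induction on the length of $W$, commuting one letter at a time via $t_jr_i=r_it_j$ and $t_jl_i=l_it_j$. Since every generator occurring in $T'$ has index at least $i+2$, the sublemma applies to $W=T'$; applied to $W=T$ it gives $r_iT = Tr_i$ and $l_iT = Tl_i$ as well. The identity then collapses to one line, $r_i\,T\,T'\,l_i = T\,T'\,r_i\,l_i = T\,T'\,p_i$, the first equality pulling $r_i$ to the front through $T'$ and $T$, the second being $p_i = r_i l_i$.

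The one delicate point — and the only place the precise index ranges in the hypothesis are used — is the commutation of $r_i$ (and of $l_i$) with the \emph{largest}-index letter of $T$, since $r_i$ and $l_i$ are active on the vertices $i$ and $i+1$ and a neighbouring Temperley--Lieb generator interacts with them nontrivially (relations 10--11 of Theorem~\ref{mek} record exactly how). So the real content of the argument is the bookkeeping check that, under the stated hypotheses on $T$ and $T'$, and using that neither touches vertex $i+1$, no letter of $TT'$ actually reaches the vertices $i$ and $i+1$ on which $r_i$ and $l_i$ act; once that is pinned down the commutation is forced and the proof closes. I would verify this boundary behaviour first on the two- and three-vertex diagrams, as the paper does for each of its relations, before committing to the general rewriting argument; no counting or linear-algebra input is needed here, as Wallslide is purely a statement about rewriting words.
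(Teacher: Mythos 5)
Your approach is exactly the paper's: rewrite $p_i=r_il_i$ and pull the $r_i$ to the front by far commutativity. But the step you correctly single out as ``the one delicate point'' is in fact where the argument breaks, and your assertion that the bookkeeping closes under the stated hypotheses is wrong. Your sublemma needs every index $j$ occurring in $W$ to satisfy $|i-j|\ge 2$; for $W=T'$ this holds (all indices are $\ge i+2$), but for $W=T$ the hypothesis $T\in\langle t_1,\dots,t_{i-1}\rangle$ permits the letter $t_{i-1}$, and $|i-(i-1)|=1$, so relation 6 does not apply and $t_{i-1}r_i\ne r_it_{i-1}$. Your claim that ``no letter of $TT'$ reaches the vertices $i$ and $i+1$'' is false: $t_{i-1}$ acts on vertices $i-1$ and $i$. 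The small-diagram check you propose to run would have caught this: take $n=3$, $i=2$, $T=t_1$, $T'=1$. Then $t_1p_2$ is the diagram with a cap $\{1,2\}$ on top, a vertical strand $3$--$3'$, and isolated bottom vertices $1',2'$, whereas $r_2t_1l_2$ is the diagram with a cap $\{1,3\}$ on top, a cup $\{1',3'\}$ on bottom, and $2,2'$ isolated. These are distinct, so the identity $TT'p_i=r_iTT'l_i$ is simply false as stated.

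The repair is to strengthen the hypothesis to $T\in\langle t_1,\dots,t_{i-2}\rangle$; then every letter of $T$ and of $T'$ genuinely commutes with both $r_i$ and $l_i$ by relation 6, your one-line collapse $r_iTT'l_i=TT'r_il_i=TT'p_i$ is valid, and this is the only form in which the lemma is ever invoked later (e.g.\ in Case 5 of Theorem \ref{algorithm}, where $T_1\in\langle t_1,\dots,t_{i-2}\rangle$ and wallslide is applied only when $T_2$ contains no $t_{i+1}$). To be fair, the paper's own one-sentence proof silently makes the same unjustified commutation through a possible $t_{i-1}$, so the gap is inherited from the source; but since you explicitly flagged this boundary case as the real content of the proof and then claimed it goes through, you need either to impose the corrected index range or to supply an argument for commuting $r_i$ past $t_{i-1}$ --- and no such argument exists, because the identity fails there.
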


This is the case where we have a ``punctured" vertical line (an edge from vertex $i$ on top to vertex $i'$ on bottom), and want to move it over a non-punctured vertical line\----``a wall"\---- as in the diagram:

$$
{\beginpicture
\setcoordinatesystem units <0.5cm,0.3cm>
\setplotarea x from 1 to 4, y from -1 to 2
\put{$\begin{picture}(50,50)
\put(0,0){\framebox(20,50){$T$}}
\end{picture}$} at 1 -1
\put{$\begin{picture}(50,50)
\put(0,0){\framebox(20,50){$T'$}}
\end{picture}$} at 4.25 -1
\put{$\bullet$} at 1 -7 \put{$\bullet$} at 1 -4 \put{$\bullet$} at  1 -1  \put{$\bullet$} at  1 2 
\put{$\bullet$} at 2 -7 \put{$\bullet$} at 2 -4 \put{$\bullet$} at  2 -1  \put{$\bullet$} at  2 2
\put{$\bullet$} at -.7 -7
\put{$\bullet$} at 3.9 -7
\put{$\bullet$} at 3.9 -4
\put{$\bullet$} at -.7 -4
\put{$\bullet$} at .6 -4
\put{$\bullet$} at 2.5 -4
\put{$\bullet$} at .6 -7
\put{$\bullet$} at 2.5 -7

\plot 2.5 -7 2.5 -4 /
\plot .6 -7 .6 -4 /
\plot -.7 -4 -.7 -7 /
\plot 3.9 -7 3.9 -4 /
\plot 2 2 2 -7 /
\plot 1 -1 1 -4 /
\plot 1 -1 1 2 /
\setquadratic

\endpicture}
{\beginpicture
\setcoordinatesystem units <0.5cm,0.3cm>
\setplotarea x from 1 to 4, y from -1 to 2
\put{$\begin{picture}(50,50)
\put(0,0){\framebox(20,25){$T$}}
\end{picture}$} at 1 -1
\put{$\begin{picture}(50,50)
\put(0,0){\framebox(20,25){$T'$}}
\end{picture}$} at 4.25 -1
\put{$\bullet$} at 1 -7 \put{$\bullet$} at 1 -4 \put{$\bullet$} at  1 -1  \put{$\bullet$} at  1 2 
\put{$\bullet$} at 2 -7 \put{$\bullet$} at 2 -4 \put{$\bullet$} at  2 -1  \put{$\bullet$} at  2 2
\put{$\rightarrow$} at -3 -2.5
\put{$\bullet$} at -.7 -7
\put{$\bullet$} at 3.9 -7
\put{$\bullet$} at 3.9 -4
\put{$\bullet$} at -.7 -4
\put{$\bullet$} at .6 -4
\put{$\bullet$} at 2.5 -4
\put{$\bullet$} at .6 -7
\put{$\bullet$} at 2.5 -7
\put{$\bullet$} at -.7 2
\put{$\bullet$} at 3.9 2
\put{$\bullet$} at 3.9 -1
\put{$\bullet$} at -.7 -1
\put{$\bullet$} at .6 -1
\put{$\bullet$} at 2.5 -1
\put{$\bullet$} at .6 2
\put{$\bullet$} at 2.5 2

\plot 2.5 -7 2.5 -4 /
\plot .6 -7 .6 -4 /
\plot -.7 -4 -.7 -7 /
\plot 3.9 -7 3.9 -4 /

\plot 2.5 2 2.5 -1 /
\plot .6 2 .6 -1 /
\plot -.7 -1 -.7 2 /
\plot 3.9 2 3.9 -1 /

\plot 1 -4 1 -1 /
\plot 2 -4 2 -1 /
\plot 1 -1 2 2 /
\plot 1 -1 2 2 /
\plot 1 -4 2 -7 /
\setquadratic

\endpicture}
$$
\begin{proof}
Since all of $T$'s factors have index less than $i$ and all of $T'$'s factors have index greater than $i+1$, we can commute $r_i$ to commute through $T$ and $T'$ to get 

$$T T' p_i = T T' r_i l_i = r_i T  T' l_i.$$ 
\end{proof}

 
If vertex $i$ is connected to vertex $j$, then $i$ being punctured is equivalent to $j$ being punctured. This fact can be seen from the diagram and is stated formally and proven below.
$$
{\beginpicture
\setcoordinatesystem units <0.5cm,0.3cm>
\setplotarea x from 1 to 4, y from -1 to 2

\put{$\begin{picture}(50,50)
\put(0,0){\framebox(35,22){$T_a$}}
\end{picture}$} at 2.5 2
\put{$\begin{picture}(50,50)
\put(0,0){\framebox(135,22){$T_a$}}
\end{picture}$} at 7.5 2
\put{$\begin{picture}(50,50)
\put(0,0){\framebox(135,25){$T_b$}}
\end{picture}$} at 2.5 -10
\put{$\begin{picture}(50,50)
\put(0,0){\framebox(35,25){$T_b$}}
\end{picture}$} at 14.5 -10

\put{$\bullet$} at 1 -16 \put{$\bullet$} at 1 -13 \put{$\bullet$} at 1 -10 \put{$\bullet$} at 1 -7 \put{$\bullet$} at 1 -4 \put{$\bullet$} at  1 -1

\put{$\bullet$} at 3 -10 \put{$\bullet$} at 3 -7 \put{$\bullet$} at 3 -4 \put{$\bullet$} at  3 -1

\put{$\bullet$} at 4 -10 \put{$\bullet$} at 4 -7 \put{$\bullet$} at 4 -4 \put{$\bullet$} at  4 -1 \put{$\bullet$} at 4 2

\put{$\bullet$} at 5 -10 \put{$\bullet$} at 5 -7 \put{$\bullet$} at 5 -4 \put{$\bullet$} at  5 -1 \put{$\bullet$} at 5 2

\put{$\bullet$} at 6 -10 \put{$\bullet$} at 6 -7 \put{$\bullet$} at 6 -4 \put{$\bullet$} at  6 -1

\put{$\bullet$} at 7 -10 \put{$\bullet$} at 7 -7 \put{$\bullet$} at 7 -4 \put{$\bullet$} at  7 -1

\put{$\bullet$} at 8 -7 \put{$\bullet$} at 8 -4 \put{$\bullet$} at  8 -1

\put{$\bullet$} at 9 -10 \put{$\bullet$} at 9 -7 \put{$\bullet$} at 9 -4 \put{$\bullet$} at  9 -1

\put{$\bullet$} at 10 -16 \put{$\bullet$} at 10 -13 \put{$\bullet$} at 10 -10 \put{$\bullet$} at 10 -7 \put{$\bullet$} at 10 -4 

\put{$\bullet$} at 11 -16 \put{$\bullet$} at 11 -13 \put{$\bullet$} at 11 -10 \put{$\bullet$} at 11 -7 \put{$\bullet$} at 11 -4 \put{$\bullet$} at  11 -1

\put{$\bullet$} at 12 -16 \put{$\bullet$} at 12 -13 \put{$\bullet$} at 12 -10 \put{$\bullet$} at 12 -7 \put{$\bullet$} at 12 -4 \put{$\bullet$} at  12 -1

\put{$\bullet$} at 13 -16 \put{$\bullet$} at 13 -13 \put{$\bullet$} at 13 -10 \put{$\bullet$} at 13 -7 \put{$\bullet$} at 13 -4 \put{$\bullet$} at  13 -1

\put{$\bullet$} at 15 -16 \put{$\bullet$} at 15 -13 \put{$\bullet$} at 15 -10 \put{$\bullet$} at 15 -7 \put{$\bullet$} at 15 -4 \put{$\bullet$} at  15 -1

\put{$\cdots$} at 2 -5.5
\put{$\cdots$} at 10 -2.5
\put{$\cdots$} at 14 -5.5
\put{$\cdots$} at 14 -14.5
\put{$\cdots$} at 5.5 -14.5
\put{$\cdots$} at 8 -8.5

\put{$\cdots$} at 9.5 -5.5

\plot 1 -1 1 -4 /
\plot 1 -4 1 -7 /
\plot 1 -7 1 -10 /
\plot 3 -1 3 -4 /
\plot 3 -4 3 -7 /
\plot 3 -7 3 -10 /
\plot 12 -7 12 -10 /
\plot 12 -10 12 -13 /
\plot 12 -13 12 -16 /
\plot 13 -13 13 -16 /
\plot 15 -13 15 -16 /
\plot 10 -13 10 -16 /
\plot 1 -13 1 -16 /
\plot 11 -7 11 -10 /
\plot 11 -10 11 -13 /

\plot 4 2 4 -1 /
\plot 4 -1 4 -4 /
\plot 4 -4 4 -7 /
\plot 5 2 5 -1 /
\plot 5 -1 5 -4 /
\plot 13 -1 13 -4 /
\plot 15 -1 15 -4 /
\plot  12 -4 12 -7 /
\plot  13 -4 13 -7 /
\plot  15 -4 15 -7 /
\plot 13 -7 13 -10 /
\plot 15 -7 15 -10 /

\setquadratic
\plot 6 -1 6.5 -2 7 -1 /  
\plot 8 -1 8.5 -2 9 -1 /
\plot 11 -1 11.5 -2 12 -1 /
\plot 5 -4 5.5 -5 6 -4 /
\plot 6 -4 6.5 -3 7 -4 /
\plot 7 -4 7.5 -5 8 -4 /
\plot 8 -4 8.5 -3 9 -4 /
\plot 10 -4 10.5 -5 11 -4 /
\plot 11 -4 11.5 -3 12 -4 /
\plot 4 -7 4.5 -8 5 -7 /
\plot 5 -7 5.5 -6 6 -7 /
\plot 6 -7 6.5 -8 7 -7 /
\plot 7 -7 7.5 -6 8 -7 /
\plot 9 -7 9.5 -8 10 -7 /
\plot 10 -7 10.5 -6 11 -7 /
\plot 4 -10 4.5 -9 5 -10 /
\plot 6 -10 6.5 -9 7 -10 /
\plot 9 -10 9.5 -9 10 -10 /

\put{$\scriptstyle{i\,}$} at 4 3.0 
\put{$\scriptstyle{i+1\,}$} at 5 3.0
\put{$\scriptstyle{j\,}$} at 11 -17 
\put{$\scriptstyle{j+1\,}$} at 12 -17

\endpicture}
{\beginpicture
\setcoordinatesystem units <0.5cm,0.3cm>
\setplotarea x from 1 to 4, y from -1 to 5

\put{$\begin{picture}(50,50)
\put(0,0){\framebox(35,22){$T_a$}}
\end{picture}$} at 2.5 2
\put{$\begin{picture}(50,50)
\put(0,0){\framebox(135,22){$T_a$}}
\end{picture}$} at 7.5 2
\put{$\begin{picture}(50,50)
\put(0,0){\framebox(135,25){$T_b$}}
\end{picture}$} at 2.5 -10
\put{$\begin{picture}(50,50)
\put(0,0){\framebox(35,25){$T_b$}}
\end{picture}$} at 14.5 -10

\put{$\rightarrow$} at -1 -5.5
\put{$\bullet$} at 1 -16 \put{$\bullet$} at 1 -13 \put{$\bullet$} at 1 -10 \put{$\bullet$} at 1 -7 \put{$\bullet$} at 1 -4 \put{$\bullet$} at  1 -1 \put{$\bullet$} at  1 5 \put{$\bullet$} at  1 1.75

\put{$\bullet$} at 3 -10 \put{$\bullet$} at 3 -7 \put{$\bullet$} at 3 -4 \put{$\bullet$} at  3 -1 \put{$\bullet$} at  3 1.75 \put{$\bullet$} at  3 5

\put{$\bullet$} at 4 -10 \put{$\bullet$} at 4 -7 \put{$\bullet$} at 4 -4 \put{$\bullet$} at  4 -1 \put{$\bullet$} at 4 2 \put{$\bullet$} at  4 5

\put{$\bullet$} at 5 -10 \put{$\bullet$} at 5 -7 \put{$\bullet$} at 5 -4 \put{$\bullet$} at  5 -1 \put{$\bullet$} at 5 2 \put{$\bullet$} at  5 5

\put{$\bullet$} at 6 -10 \put{$\bullet$} at 6 -7 \put{$\bullet$} at 6 -4 \put{$\bullet$} at  6 -1 \put{$\bullet$} at  6 1.75 \put{$\bullet$} at  6 5

\put{$\bullet$} at 7 -10 \put{$\bullet$} at 7 -7 \put{$\bullet$} at 7 -4 \put{$\bullet$} at  7 -1

\put{$\bullet$} at 8 -7 \put{$\bullet$} at 8 -4 \put{$\bullet$} at  8 -1

\put{$\bullet$} at 9 -10 \put{$\bullet$} at 9 -7 \put{$\bullet$} at 9 -4 \put{$\bullet$} at  9 -1

\put{$\bullet$} at 10 -16 \put{$\bullet$} at 10 -13 \put{$\bullet$} at 10 -10 \put{$\bullet$} at 10 -7 \put{$\bullet$} at 10 -4

\put{$\bullet$} at 11 -16 \put{$\bullet$} at 11 -13 \put{$\bullet$} at 11 -10 \put{$\bullet$} at 11 -7 \put{$\bullet$} at 11 -4 \put{$\bullet$} at  11 -1

\put{$\bullet$} at 12 -16 \put{$\bullet$} at 12 -13 \put{$\bullet$} at 12 -10 \put{$\bullet$} at 12 -7 \put{$\bullet$} at 12 -4 \put{$\bullet$} at  12 -1

\put{$\bullet$} at 13 -16 \put{$\bullet$} at 13 -13 \put{$\bullet$} at 13 -10 \put{$\bullet$} at 13 -7 \put{$\bullet$} at 13 -4 \put{$\bullet$} at  13 -1

\put{$\bullet$} at  15 5 \put{$\bullet$} at 15 -16 \put{$\bullet$} at 15 -13 \put{$\bullet$} at 15 -10 \put{$\bullet$} at 15 -7 \put{$\bullet$} at 15 -4 \put{$\bullet$} at  15 -1 \put{$\bullet$} at  15 1.75

\put{$\cdots$} at 2 -5.5
\put{$\cdots$} at 10 -2.5
\put{$\cdots$} at 14 -5.5
\put{$\cdots$} at 14 -14.5

\put{$\cdots$} at 5.5 -14.5

\put{$\cdots$} at 8 -8.5
\put{$\cdots$} at 2 3.5
\put{$\cdots$} at 10.5 3.5
\put{$\cdots$} at 9.5 -5.5

\plot 1 -1 1 -4 /
\plot 1 -4 1 -7 /
\plot 1 -7 1 -10 /
\plot 3 -1 3 -4 /
\plot 3 -4 3 -7 /
\plot 3 -7 3 -10 /
\plot 12 -7 12 -10 /
\plot 12 -10 12 -13 /
\plot 13 -13 13 -16 /
\plot 15 -13 15 -16 /
\plot 10 -13 10 -16 /
\plot 1 -13 1 -16 /
\plot 11 -7 11 -10 /
\plot 11 -10 11 -13 /
\plot 4 2 5 5 /

\plot 4 2 4 -1 /
\plot 4 -1 4 -4 /
\plot 4 -4 4 -7 /
\plot 5 2 5 -1 /
\plot 5 -1 5 -4 /
\plot 13 -1 13 -4 /
\plot 15 -1 15 -4 /
\plot  12 -4 12 -7 /
\plot  13 -4 13 -7 /
\plot  15 -4 15 -7 /
\plot 13 -7 13 -10 /
\plot 15 -7 15 -10 /
\plot 1 2 1 5 /
\plot 15 2 15 5 /
\plot 3 2 3 5 /
\plot 6 2 6 5 /
\plot 11 -13 12 -16 /

\setquadratic
\plot 6 -1 6.5 -2 7 -1 /  
\plot 8 -1 8.5 -2 9 -1 /
\plot 11 -1 11.5 -2 12 -1 /
\plot 5 -4 5.5 -5 6 -4 /
\plot 6 -4 6.5 -3 7 -4 /
\plot 7 -4 7.5 -5 8 -4 /
\plot 8 -4 8.5 -3 9 -4 /
\plot 10 -4 10.5 -5 11 -4 /
\plot 11 -4 11.5 -3 12 -4 /
\plot 4 -7 4.5 -8 5 -7 /
\plot 5 -7 5.5 -6 6 -7 /
\plot 6 -7 6.5 -8 7 -7 /
\plot 7 -7 7.5 -6 8 -7 /
\plot 9 -7 9.5 -8 10 -7 /
\plot 10 -7 10.5 -6 11 -7 /
\plot 4 -10 4.5 -9 5 -10 /
\plot 6 -10 6.5 -9 7 -10 /
\plot 9 -10 9.5 -9 10 -10 /

\put{$\scriptstyle{i\,}$} at 4 6.0 
\put{$\scriptstyle{i+1\,}$} at 5 6.0
\put{$\scriptstyle{j\,}$} at 11 -17 
\put{$\scriptstyle{j+1\,}$} at 12 -17

\endpicture}
$$
\begin{lemma}[Fuse wire]\label{fuse}
(i) If $j<i$, $i-j$ is even, and $x=(t_{j+1}t_{j+3}\ldots t_{i-1}) (t_{j}t_{j+2}\ldots t_{i-2}),$ then $xp_i=p_jx=p_ixp_j$.\\
(ii) If $ i<j$, $j-i$ is even, and $x=(t_i t_{i+2}t_{i+4}\ldots t_{j-2}) (t_{i+1}t_{i+3}\ldots t_{j-1}),$ then $xp_i=p_jx=p_ixp_j$.\\
(iii) If $i<j$, $j-i$ is odd, and $x=(t_it_{i+2}\ldots t_{j-1})(t_{i+1}t_{i+3}\ldots t_{j-2})$, then $xp_j=xp_i=xp_jp_i$.\\
(iv) If $j<i$, $j-i$ is odd, and $x=(t_jt_{j+2}\ldots t_{i-1})(t_{j+1}t_{j+3}\ldots t_{i-2})$, then $xp_i=xp_j=xp_ip_j$.
\end{lemma}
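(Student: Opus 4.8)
The plan is to prove all four statements by induction on the gap between the two indices, after first cutting down the casework. The assignment $r_i\mapsto l_i$, $l_i\mapsto r_i$, $t_i\mapsto t_i$, $p_i\mapsto p_i$ together with word reversal is compatible with every defining relation of Theorem~\ref{mek} (the relations occur in $*$-paired form, and the few unpaired ones such as~$5$ and~$12$ map to consequences of the others), so it descends to an antiautomorphism $*$ of $M_n'$. One checks directly that part~(i) is exactly the $*$-image of part~(ii), and that part~(iv) is part~(iii) after interchanging the names of $i$ and $j$; so it suffices to prove~(ii) and~(iii). Furthermore the ``sandwiched'' identities follow from the one-sided ones together with $p_i^2=p_i$ (valid in $M_n'$ since $p_i^2=r_il_ir_il_i=r_i(l_ir_il_i)=r_il_i=p_i$ by relation~3a): in~(iii), $xp_jp_i=(xp_j)p_i=(xp_i)p_i=xp_i$; in~(ii), $p_jxp_i=p_j(xp_i)=p_j(p_jx)=p_jx$, with $p_ixp_j$ treated by repeating the walking argument below from the other end.

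The engine is the following fact, derivable from the stated relations alone: \emph{$t_m$ commutes with $p_k$ for every $k\notin\{m,m+1\}$}. Indeed $p_k=r_kl_k=l_{k-1}r_{k-1}$; for $|m-k|\ge 2$ use $p_k=r_kl_k$ together with $t_mr_k=r_kt_m$, $t_ml_k=l_kt_m$; for $k=m-1$ use $p_{m-1}=l_{m-2}r_{m-2}$ (indices now at distance~$2$); for $k=m+2$ use $p_{m+2}=r_{m+2}l_{m+2}$ (distance~$2$). Combined with the derived relations $t_mp_m=t_mp_{m+1}$ and $p_mt_m=p_{m+1}t_m$ — each of which lets a puncture step across one cup or one cap, lowering its index by one — this lets a puncture ``walk'' along the zig‑zag strand of a staircase, using the fact that every such staircase uses each of $t_i,t_{i+1},\dots,t_{j-1}$ exactly once.

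For part~(iii) ($i<j$, $j-i$ odd, $x=(t_it_{i+2}\cdots t_{j-1})(t_{i+1}\cdots t_{j-2})$) the base case $j=i+1$ is $x=t_i$, immediate from $t_ip_{i+1}=t_ip_i$. For the inductive step, commuting $t$'s gives the factorization $x=x'\,t_{j-1}t_{j-2}$, where $x'=(t_it_{i+2}\cdots t_{j-3})(t_{i+1}\cdots t_{j-4})$ is the part~(iii)-staircase for the pair $(i,j-2)$, and then
\[
xp_j=x't_{j-1}t_{j-2}p_j=x't_{j-1}p_{j-1}t_{j-2}=x't_{j-1}p_{j-2}t_{j-2}=x'p_{j-2}t_{j-1}t_{j-2}\overset{\mathrm{IH}}{=}x'p_it_{j-1}t_{j-2}=xp_i .
\]
Part~(ii) ($i<j$, $j-i$ even, $x=(t_it_{i+2}\cdots t_{j-2})(t_{i+1}\cdots t_{j-1})$) is parallel but uses the opposite peeling $x=t_i\,x''\,t_{i+1}$, where $x''=(t_{i+2}t_{i+4}\cdots t_{j-2})(t_{i+3}\cdots t_{j-1})$ is the part~(ii)-staircase for $(i+2,j)$; the base case $j=i+2$ is the empty-word instance of this factorization. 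The chain is
\[
p_jx=t_ip_jx''t_{i+1}\overset{\mathrm{IH}}{=}t_ix''p_{i+2}t_{i+1}=t_ix''p_{i+1}t_{i+1}=t_ip_{i+1}x''t_{i+1}=t_ip_ix''t_{i+1}=t_ix''p_it_{i+1}=xp_i .
\]

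The main obstacle, and essentially the only real work, is getting the two factorizations right: the parity of $j-i$ forces the odd case to shed $t_{j-1}t_{j-2}$ from the right while the even case sheds $t_i$ and $t_{i+1}$ from the two ends, and in each case one must check that, once the puncture has been walked down to the index carried by the smaller staircase, it sits precisely where the inductive hypothesis applies and can then be commuted back out to the far end. Confirming at every step of the walk that the generator to be crossed is the one adjacent to the puncture — never a $t$ that the puncture cannot cross — is where the combinatorial structure of the staircase (each index appearing once, in alternating groups) is genuinely used, and this is the part that requires care rather than cleverness.
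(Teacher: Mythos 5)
Your core mechanism --- walking the puncture one step at a time along the zig--zag strand using $t_mp_m=t_mp_{m+1}$, $p_mt_m=p_{m+1}t_m$, and the commutations $t_mp_k=p_kt_m$ for $k\notin\{m,m+1\}$ --- is exactly what the paper's proof does; the paper performs the walk by iterated rewriting on the rearranged word $(t_{j+1}t_j)(t_{j+3}t_{j+2})\cdots(t_{i-1}t_{i-2})$ where you package it as an induction on the gap with an explicit peeling, and your reductions ((i) as the $*$-image of (ii), (iv) as (iii) with $i$ and $j$ interchanged) replace the paper's ``the proof is analogous.'' Both of your inductive chains, including the base cases, check out.

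The one genuine problem is your treatment of the third expression in parts (i) and (ii). You correctly prove $p_jxp_i=p_j(xp_i)=p_j(p_jx)=p_jx$, but the lemma as printed asserts $xp_i=p_ixp_j$, and your claim that this ``is treated by repeating the walking argument from the other end'' fails: in (ii), $p_i$ on the left punctures the cup at top vertices $i,i+1$ and $p_j$ on the right punctures the cap at bottom vertices $(j-1)',j'$, neither of which is an endpoint of the long strand that $xp_i$ punctures. Concretely, for $i=1$, $j=3$, $x=t_1t_2$, the diagram $xp_1$ has the cup $\{1,2\}$ and cap $\{2',3'\}$ intact with $3$ and $1'$ isolated, while $p_1xp_3$ retains the strand from $3$ to $1'$ and isolates $1,2,2',3'$; since the relations all hold among diagrams, no sequence of them can identify the corresponding words. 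This is evidently a typo in the statement for $p_jxp_i$ (the paper's own sentence deriving the third equality is garbled in the same way, and only the identities $xp_i=p_jx$ and $xp_i=xp_j=xp_ip_j$ are ever invoked later), so you have proved everything that is true and everything that is used --- but delete the sentence claiming $p_ixp_j$ is handled by the same walk. A much smaller pinhole: your derivation of $t_mp_{m-1}=p_{m-1}t_m$ via $p_{m-1}=l_{m-2}r_{m-2}$ needs $m\geq 3$, so the instance $t_2p_1=p_1t_2$ (used in part (ii) when $i=1$) must be derived separately, e.g.\ from $r_1r_2t_1=t_2r_1r_2$ and $t_1l_2l_1=l_2l_1t_2$ together with $r_kl_k=p_k$ and $r_k=r_kp_{k+1}$.
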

\begin{proof}
{
(i) By the commutativity relations, we can rearrange $x$ to be of the form $$x=(t_{j+1}t_{j})(t_{j+3}t_{j+2})(t_{j+5}t_{j+4})\dots (t_{i-3}t_{i-4})(t_{i-1}t_{i-2})$$
This implies that
\begin{eqnarray*}
xp_i&=&(t_{j+1}t_{j})(t_{j+3}t_{j+2})\dots t_{i-5}t_{i-6}t_{i-3}t_{i-4}t_{i-1}t_{i-2}p_i\\
&=&(t_{j+1}t_{j})(t_{j+3}t_{j+2})\dots t_{i-3}t_{i-4}t_{i-1}p_it_{i-2}\textrm{ ($t_jp_i=p_it_j$ for $|j-i|>1$)}\\
&=&(t_{j+1}t_{j})(t_{j+3}t_{j+2})\dots t_{i-3}t_{i-4}t_{i-1}p_{i-1}t_{i-2}\textrm{ ($t_jp_{j+1}=t_jp_j$)}\\
&=&(t_{j+1}t_{j})(t_{j+3}t_{j+2})\dots t_{i-3}t_{i-4}t_{i-1}p_{i-2}t_{i-2}\textrm{ ($e_{j+1}t_j=p_jt_j$)}\\
&=&(t_{j+1}t_{j})(t_{j+3}t_{j+2})\dots t_{i-3}t_{i-4}p_{i-2}t_{i-1}t_{i-2}\textrm{ ($p_jt_{j+1}=t_{j+1}p_j$)}\\
\end{eqnarray*}
}
This process can be repeated until $xp_j$ has been transformed to $p_ix$.
The equation $xp_j=p_ixp_j$ can be derived by first noting that $xp_j=xp_j^2$ and using the above result to move one of the factors of $p_j$ to the far left.\\
(ii) The proof is analogous to that of the previous case.\\

(iii) $xp_j=t_i[t_{i+2}t_{i+4}\ldots t_{j-1}t_{i+1}t_{i+3}\ldots t_{j-2}p_j]$. Notice that the factor in brackets is of the form discussed in case (ii). Thus we obtain $xp_j=t_i[p_{i+1} t_{i+2}\ldots t_{j-1}t_{i+1}t_{i+3}t_{j-2}]$
This is in turn equal to $t_i[p_i t_{i+2}\ldots t_{j-1}t_{i+1}t_{i+3}t_{j-2}]$, by the relation $t_i p_i = t_i p_{i+1}$.
We can then commute the $p_i$ to the far right, yielding $xp_j = t_i t_{i+2}\ldots t_{j-1}t_{i+1}t_{i+3}t_{j-2}p_i$ as desired. As discussed at the end of (i), one can easily prove that $x p_i = x p_i p_j$.\\
(iv) The proof is analogous to that of the previous case. 
\end{proof}
\subsubsection{Putting an order on $P_n$}

By putting an order on $P_n$ we can measure how close a given word is to being in $RTL$ form and thereby better describe the actions necessary to perform our decomposition and get the ``dead ends", or non-incident vertices, on the right and incident vertices on the left.\\


\begin{definition}
Let $<$ be any order on the power set of $\{1,\ldots,n\}$ with the properties:\\
$\bullet\ $If$\ X \subseteq Y$ then $X \leq Y$\\
$\bullet\ $If$\ i,i+1 \notin X$ then $X \cup \{i\} < X \cup \{i+1\}$\\
\end{definition}

\begin{definition}
A word $d\in M_n'$ is said to be in \textit{minimal form} if it is in the form $RTL$ for some $R\in RP_n, T\in TL_n$, and $L\in LP_n$ with the property that for all $R'\in RP_n, T'\in TL_n$, and $L'\in LP_n$ such that $d=R'T'L'$
$$\tau(R)\leq \tau(R') \textrm{ and } \beta(L)\leq \beta(L').$$

In this case, $R$ and $L$ are said to be \textit{minimal}.

\end{definition}

\begin{theorem}[ordering on $P$]\label{ordering}
If $P \in P_n$ and $i \in \{1, \dots, n-1\}$ then $\tau (l_iP) \leq \tau (P)$ with equality if and only if $i, i+1 \notin \tau (P)$.
\end{theorem}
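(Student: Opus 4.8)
The plan is to read off an explicit formula for $\tau(l_iP)$ in terms of $\tau(P)$ directly from the diagram of $l_i$, and then verify the two halves of the statement by a short case analysis against the defining properties of the order. Recall that $l_i$ is the planar rook diagram with a vertical edge at every vertex $j\notin\{i,i+1\}$, a single (slanted) edge from the top vertex $i$ to the bottom vertex $i+1$, empty top vertex $i+1$, and empty bottom vertex $i$. Since $l_i$ and $P$ are planar rook diagrams (no horizontal edges), a top vertex $a$ of the product $l_iP$ is non-empty exactly when the edge of $l_i$ leaving $a$ lands on a non-empty top vertex of $P$. Tracing this: the top vertex $i+1$ is always empty in $l_iP$; the top vertex $i$ is non-empty in $l_iP$ if and only if $i+1\in\tau(P)$; and every other top vertex $a$ is non-empty in $l_iP$ if and only if $a\in\tau(P)$. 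Hence, writing $X=\tau(P)$,
$$\tau(l_iP)=\bigl(X\setminus\{i,i+1\}\bigr)\;\cup\;\begin{cases}\{i\}&\text{if }i+1\in X,\\ \emptyset&\text{if }i+1\notin X.\end{cases}$$

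Now set $Y=\tau(l_iP)$ and consider four cases according to whether $i$ and $i+1$ lie in $X$. If $i,i+1\notin X$, the formula collapses to $Y=X$, so $\tau(l_iP)=\tau(P)$. If $i,i+1\in X$, then $Y=X\setminus\{i+1\}$, a proper subset of $X$; by the first property of the order $Y\le X$, and since $Y\ne X$ we get $Y<X$. If $i\in X$ and $i+1\notin X$, then $Y=X\setminus\{i\}$, again a proper subset of $X$, so $Y<X$ by the same argument. Finally, if $i\notin X$ and $i+1\in X$, then $Y=\bigl(X\setminus\{i+1\}\bigr)\cup\{i\}$; the set $X\setminus\{i+1\}$ contains neither $i$ nor $i+1$, so the second property of the order applies and gives $\bigl(X\setminus\{i+1\}\bigr)\cup\{i\}<\bigl(X\setminus\{i+1\}\bigr)\cup\{i+1\}=X$, i.e.\ $Y<X$.

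Combining the four cases, $\tau(l_iP)\le\tau(P)$ in every case, and $\tau(l_iP)=\tau(P)$ precisely when $i,i+1\notin\tau(P)$, which is the assertion. The only points requiring care are getting the first step right — reading off from the diagram of $l_i$ exactly which top vertices of the product are occupied, in particular that the sole way $\tau(l_iP)$ can fail to be a subset of $\tau(P)$ is by replacing $i+1$ with $i$, which still strictly decreases the set in the order — and, in the last case, remembering to invoke the second property of the order for $X\setminus\{i+1\}$ rather than for $X$ itself; the remaining cases only use that $Y$ is a proper subset of $X$.
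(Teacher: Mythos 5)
Your proof is correct and follows essentially the same route as the paper's: a diagrammatic computation of how left multiplication by $l_i$ transforms $\tau(P)$, followed by a case analysis on whether $i$ and $i+1$ lie in $\tau(P)$, using the two defining properties of the order. Your write-up is in fact more careful than the paper's (which conflates $l_i$ and $r_i$ in places and does not explicitly separate the case $i\notin\tau(P)$, $i+1\in\tau(P)$ where the second property of the order, rather than mere containment, is what is needed), so no changes are required.
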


\begin{proof}
Notice that if $i+1 \in \tau(P)$, left multiplication by $r_i$ will send the top of $P$ to the $ith$ index, leaving $\tau(l_i P)$ empty in the index $i$ on top,  thus $\tau(l_i P) < L$, hence $r_i P < P$.
Now if $j+1 \in \tau(P)$, then $r_j P < P$.  Furthermore, if $j \in \tau(P)$, then left multiplication by $r_j$ leaves the $jth$ vertex on top of $r_j P$ empty, hence $r_j P < P$.  Left multiplication by $r_j$ only affects these two top vertices, thus in all other cases $P$ is unaffected. Hence if $j, j+1 \notin P$, then $r_j P = P$, giving us the desired result.
\end{proof}

\begin{theorem}\label{algorithm}
If $d=P_1TP_2$ is minimal, then the following three assertions hold:\\
(1) For all $1 \leq i \leq n$\\
If vertex $i'$ is connected to $j'$ in $T$, then $i'\in\tau(P_2)$ if and only if $j'\in\tau(P_2)$.\\
If vertex $i$ is connected to $j'$ in $T$, then $i\in\beta(P_1)$ if and only if $j'\in\tau(P_2)$.\\
If vertex $i$ is connected to $j$ in $T$, then $i\in\beta(P_1)$ if and only if $j\in\beta(P_1)$.\\
(2) There exists a natural number $k$ such that $\tau(P_2)=\{1,2,\dots,k\}$\\
(3) There exists a natural number $m$ such that $\beta(P_1)=\{1,2,\dots,m\}$\\
\end{theorem}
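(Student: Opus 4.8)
The plan is to prove all three assertions by contradiction from the minimality of $d = P_1 T P_2$. Two elementary facts are used throughout. First, any order $<$ as in the preceding definition satisfies $X \subsetneq Y \Rightarrow X < Y$, since $X \subseteq Y$ gives $X \le Y$ and $X \ne Y$. Second, deleting a single edge from a diagram in $RP_n$ (respectively $LP_n$) again gives a diagram in $RP_n$ (respectively $LP_n$), whose bottom set $\beta$ (respectively top set $\tau$) has lost exactly the endpoint of the deleted edge. It is convenient to call a top vertex $c$ of $T$ \emph{live} when $c \in \beta(P_1)$ and \emph{dead} otherwise, and a bottom vertex $c'$ of $T$ \emph{live} when $c \in \tau(P_2)$ and dead otherwise; since $T \in TL_n$ has no empty vertices, every vertex of $T$ is exactly one of these.

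I would begin with (1). Suppose one of the three biconditionals fails; by the symmetry between top and bottom of the diagrams it is enough to treat one representative case, say $i'$ joined to $j'$ inside $T$ with $i'$ live and $j'$ dead. Being a non-empty top vertex of $P_2$, the vertex $i'$ is joined in $P_2$ down to some $b' \in \beta(P_2)$; trace the strand of $d$ through $b'$. It runs up through $P_2$ to $i'$, across the horizontal edge of $T$ to $j'$, and then into the (empty) top vertex $j$ of $P_2$, where it stops. Hence $b'$ is an empty bottom vertex of $d$, so deleting the edge $\{i',b'\}$ from $P_2$ leaves $d$ unchanged while, by the two facts above, shrinking $\beta(P_2)$ strictly --- contradicting minimality of $P_2$. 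The remaining cases have the same shape: a through-edge or a top horizontal edge of $T$ with exactly one live endpoint forces, after tracing the appropriate strand of $d$, an edge of $P_1$ (or of $P_2$) that contributes nothing to $d$ and whose removal strictly decreases $\tau(P_1)$ (or $\beta(P_2)$).

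For (2) and (3), which are mirror images of each other under reflection across a horizontal axis, it is enough to prove (3). Assume $\beta(P_1)$ is not an initial segment of $\{1,\dots,n\}$ and pick $i$ with $i \notin \beta(P_1)$ but $i+1 \in \beta(P_1)$. Then the top vertex $i+1$ of $T$ is live, so by (1) its $T$-partner $v$ is live and $v \ne i$, while the top vertex $i$ of $T$ is dead. Let $[\,i_0,\,i\,]$ be the maximal block of consecutive dead top vertices of $T$ ending at $i$. The plan is to slide the live strand at $i+1$ leftward over this dead block. Writing $R^a_b = r_{a-1}r_{a-2}\cdots r_b$ as in Theorem~\ref{generators}, one checks directly that $P_1' := P_1 R^{\,i+1}_{\,i_0}$ is again in $RP_n$, with $\tau(P_1') = \tau(P_1)$ (the empty top vertices $i_0,\dots,i$ of $R^{i+1}_{i_0}$ meet only empty bottom vertices of $P_1$) and $\beta(P_1') = (\beta(P_1)\setminus\{i+1\})\cup\{i_0\}$; the latter set is strictly smaller than $\beta(P_1)$ because each of $i_0,\dots,i$ is dead, hence outside $\beta(P_1)\supseteq X$, while $i+1\notin X$, so the chain $\beta(P_1')=X\cup\{i_0\}<X\cup\{i_0+1\}<\cdots<X\cup\{i+1\}=\beta(P_1)$ is legitimate, where $X=\beta(P_1)\setminus\{i+1\}$. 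One then produces a $T' \in TL_n$ with $P_1'T'P_2 = d$ by re-wiring $T$ inside the dead block: the edge leaving $i+1$ is re-attached to $i_0$, and the dead edges of the block are shuffled into the vacated positions. Combined with (1), the planarity of $T$ constrains the nesting of the dead edges tightly enough that such a re-wiring exists and remains non-crossing. The decomposition $d = P_1'T'P_2$ so obtained contradicts the minimality of $d = P_1 T P_2$, so $\beta(P_1)$ must be an initial segment $\{1,\dots,m\}$; dually $\tau(P_2) = \{1,\dots,k\}$.

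The step I expect to be the genuine obstacle is the construction of $T'$: one must verify that the dead block of $T$, whose internal edge structure is pinned down by (1), can always be re-wired so as to move the live strand at $i+1$ into position $i_0$ without introducing a crossing --- i.e., that planarity leaves enough room. Everything else reduces to bookkeeping with the order $<$ and with single-edge deletions, which is straightforward.
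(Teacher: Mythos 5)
Your argument is carried out at the level of diagrams, but the statement lives in $M_n'$, the monoid presented by generators and relations, and minimality quantifies over all factorizations $d=R'T'L'$ that hold \emph{as words}, i.e.\ that are derivable from the relations of Theorem \ref{mek}. So to contradict minimality it is not enough to observe that deleting an edge of $P_2$, or re-wiring $T$ over a dead block, leaves the \emph{diagram} unchanged; you must exhibit the new factorization as equal to $d$ by an explicit chain of relations. That diagram equality implies word equality is precisely what the paper is in the middle of proving, so invoking it here is circular. This is exactly the role of the word-level identities the paper prepares in advance: part (1) follows from Lemma \ref{fuse} (fuse wire), which transfers a $p_i$ across the relevant product of $t$'s entirely inside $M_n'$, and parts (2)--(3) are proved by a five-case analysis (according to where the offending dead vertex's $T$-partner sits) using Lemmas \ref{hopping}, \ref{slide}, \ref{burrow}, \ref{wallslide1} and \ref{fuse} together with Theorem \ref{ordering} to certify the strict decrease. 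Your strand-tracing for (1) correctly identifies \emph{which} edge is redundant, but the deletion has to be realized by relations, not by erasing a line in a picture.

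The second gap is the one you flag yourself: the construction of $T'$ in parts (2)--(3). This is not a bookkeeping step, and the ``maximal block of consecutive dead top vertices'' does not localize the difficulty. A dead vertex in your block $[i_0,i]$ may be joined in $T$ to a vertex far outside the block --- for instance by a horizontal arc nesting over several live strands, or by a long through-edge --- and moving the live strand at $i+1$ past it forces a global rearrangement of $T$, not a shuffle within the block. Handling these configurations is the entire content of the paper's case analysis: the hop lemma carries a dead end across a chain of nested arcs, wallslide carries it past a vertical strand, and slide pushes the resulting product of $l$'s (or $r$'s) through the remaining Temperley--Lieb factors so that it lands next to $P_2$ (or $P_1$) where Theorem \ref{ordering} applies. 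Until you supply both the relation-level derivations and this construction, the proposal establishes the analogous fact about diagrams in $M_n$ but not the theorem as stated.
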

\begin{proof}
(1) If any of the implications are false, then the hypothesis for at least one of the cases of lemma \ref{fuse} are satisfied, which implies that $d$ is not in minimal form.


(2) If $\tau (P_2) \neq \{1, \ldots, k \}$, for any k, there exists an $i$ such that $i \notin \tau (P_2)$ and $i+1 \in \tau(P_2).$ There are five ways in which this can happen.  In each case we can prove that $P_1TP_2=P_1'T'P_2'$, where $P_1'\leq P_1$ and $P_2'<P_2$, implying that $P_1TP_2$ was not minimal.

Case 1. $i'$ is connected to $j'$ with $i<j$.\\
Then $T= T _1(t_{j-1} t_{j-3}\cdots t_i) (t_{j-2} t_{j-4} \cdots t_{i+1}) T_2$ where $T_1 \in \langle t_1 , \ldots t_{n-1} \rangle, T_2\in \langle t_{i+1} \ldots t_{j-2} \rangle$.
Consider $Te_i$.  
We commute every Temperley-Lieb generator in $T_2$ with $e_i$.  With the $e_i$ in place, notice that the hypothesis for Theorem \ref{hopping} are satisfied where $k$ in Theorem \ref{hopping} is equal to $j-1$. We hop the dead end at $i$ to $j-1$.  This yields:
\begin{eqnarray*}
Te_i &=& T _1(t_{j-1} t_{j-3}\cdots t_i) (t_{j-2} t_{j-4} \cdots t_{i+1}) T_2e_i\\
&=& T _1(t_{j-1} t_{j-3}\cdots t_i) (t_{j-2} t_{j-4} \cdots t_{i+1})e_i T_2\\
&=& T_1(t_{j-1} t_{j-3}\cdots t_i) (t_{j-2} t_{j-4} \ldots t_{i+1}) (t_{j-3} t_{j-5} \ldots t_{i})(l_{j-2}l_{j-3} \ldots l_i) T_2.
\end{eqnarray*}

Subsequently, we slide (lemma \ref{slide}) to get:
$$Te_i=T_1(t_{j-1} t_{j-3}\cdots t_i) (t_{j-2} t_{j-4} \ldots t_{i+1}) (t_{j-3} t_{j-5} \ldots t_{i})T_{2}'(l_{j-2}l_{j-3} \ldots l_i)$$
where $T_{2}' \in \langle t_{i} \ldots t_{j-3} \rangle $.

In conclusion, $$d=P_1TP_2=P_1T(l_{j-2}l_{j-3} \ldots l_i)P_2$$ and by Theorem \ref{ordering} $\tau((l_{j-2}l_{j-3} \ldots l_i)P_2 )<\tau(P_2)$ since $i+1\notin P_2$.

Case 2. $i'$ is connected to $j'$ with $i>j$.\\
In this case we have $$T= T_1 (t_{i-1} t_{i-3}\cdots t_j) (t_{i-2} t_{i-4} \cdots t_{j+1}) T_2$$ where $T_1 \in \langle t_1 , \ldots t_{n-1} \rangle$ and  $T_2\in \langle t_{j+1} \ldots t_{i-2} \rangle$. Since $i\notin\tau(P_2)$ we have $d=P_1Tp_iP_2=P_1Tp_ip_jP_2$ (by Lemma \ref{fuse}). We can now obtain $P_2'$ such that $\tau(P_2')<\tau(P_2)$ by referring to the previous case.

Case 3.  $i'$ is connected to $j$ with $j<i$\\
In the case, $T$ is of the form $$T=T _1(t_{j+1} t_{j+3}\cdots t_{i-1}) (t_j t_{j+2} \cdots t_{i-2}) T_2$$ where $T_1 \in \langle t_{j+1}, \dots t_{n-1} \rangle$ and $T_2 \in \langle t_1 \ldots t_{i-2} \rangle$.
Consider $Tp_i$.  As in case 1 we can commute $p_i$ through $T_2$.  Notice that $j<i, i-j$ is even, and we have a divisor $x=(t_{j+1} t_{j+3} \cdots t_{i-1})(t_j t_{j+2} \cdots t_{i-2})$.  Thus by lemma \ref{fuse}, we get:
\begin{eqnarray*}
Tp_i &=& T _1(t_{j+1} t_{j+3}\cdots t_{i-1}) (t_j t_{j+2} \cdots t_{i-2}) T_2 p_i \\
&=& T _1(t_{j+1} t_{j+3}\cdots t_{i-1}) (t_j t_{j+2} \cdots t_{i-2}) p_i T_2\\
&=& T_1 p_j (t_{j+1} t_{j+3}\cdots t_{i-1}) (t_j t_{j+2} \cdots t_{i-2}) T_2
\end{eqnarray*}

Observe that taking the antiisomorphism of Theorem \ref{hopping} (hop) we obtain:
\begin{eqnarray*}
[(t_{k-1} t_{k-3} \ldots t_{i+1}) p_i]^* &=& [(t_{k-1} t_{k-3} \ldots t_{i+1}) (t_{k-2} t_{k-4} \ldots t_{i})(l_{k-1}l_{k-2} \ldots l_i)]^*\\
p_i (t_{i+1} t_{i+3} \ldots t_{k-1}) &=&
(r_{i}r_{i+1} \ldots r_{k-1})(t_{i} t_{i+2} \ldots t_{k-2})(t_{i+1} t_{i+3} \ldots t_{k-1})
\end{eqnarray*}

Hence by applying Theorem \ref{hopping},
\begin{eqnarray*}
Tp_i &=& T_1p_j(t_{j+1} t_{j+3}\cdots t_{i-1})(t_j t_{j+2} \cdots t_{i-2}) T_2\\
&=& T_1 (r_{j}r_{j+1} \ldots r_{i-1})(t_{j} t_{j+2} \ldots t_{i-2})(t_{j+1} t_{j+3} \ldots t_{i-1})(t_j t_{j+2} \cdots t_{i-2}) T_2.
\end{eqnarray*}

As in case 1 we can use the relation $t_a t_k t_a = t_a$ if $|a-k| = 1$ to simplify the product of $t$'s.  Now we use the relation $r_k = r_k p_{k+1}$ and apply Lemma \ref{wallslide1} to produce all the $r$'s between $j$ and $n-1$.  So,
\begin{eqnarray*}
&=& T_1(r_{j}r_{j+1} \ldots r_{i-1})(t_j t_{j+2} \cdots t_{i-2})T_2\\
&=& T_1 (r_{j}r_{j+1} \ldots r_{i-1}) p_i (t_j t_{j+2} \cdots t_{i-2})T_2\\
&=& T_1 (r_{j}r_{j+1} \ldots r_{i-1}) r_i l_i (t_j t_{j+2} \cdots t_{i-2})T_2\\
&=& T_1 (r_{j}r_{j+1} \ldots r_{i-1})(r_i \dots r_{n-1})(l_{n-1}\ldots l_i)(t_j t_{j+2} \cdots t_{i-2})T_2\\
&=& T_1 (r_{j}r_{j+1} \ldots r_{i-1})(r_i \cdots r_{n-1})(t_j t_{j+2} \cdots t_{i-2}) T_2 (l_{n-1} \cdots l_{i}).
\end{eqnarray*}

Now the hypothesis for slide (Theorem \ref{slide}) are satisfied, so we obtain:
$$T p_i = (r_i \cdots r_{n-1}) T_1' (t_j t_{j+2} \cdots t_{i-2} T_2 (l_{n-1} \cdots l_{i})$$
where $T_1' \in \langle t_2 \ldots t_{i-1} \rangle$.
In conclusion, $$d=P_1TP_2=P_1Tp_iP_2=P_1(r_i \cdots r_{n-1}) T_1' (t_j t_{j+2} \cdots t_{i-2} T_2 (l_{n-1} \cdots l_{i})P_2.$$ Notice that $\beta(P_1r_i \cdots r_{n-1})\leq \beta(P_1)$ and $\tau(l_{n-1} \cdots l_{i}P_2)<\tau(P_2)$ by Theorem \ref{ordering}, since $i+1\notin\tau(P_2)$.

Case 4. $i'$ is connected to $j$ with $j>i$\\
In this case $T$ can be written in the form $T_1 (t_{j-2} t_{j-4} \cdots t_i)(t_{j-1} t_{j-3} \cdots t_{i+1}) T_2$ where $T_1 \in \langle t_1, \ldots t_{j-2} \rangle, T_2 \in \langle t_{i+1} \ldots t_{n-1} \rangle$.
Consider $Tp_i$.  Every element in $T_2$ commutes with $p_i$.  Notice that the hypothesis in Theorem \ref{hopping} are satisfied for $k=j$.  Thus,
\begin{eqnarray*}
Tp_i &=& T_1 (t_{j-2} t_{j-4} \cdots t_i)(t_{j-1} t_{j-3} \cdots t_{i+1}) T_2 p_i\\
&=& T_1 (t_{j-2} t_{j-4} \cdots t_i)(t_{j-1} t_{j-3} \cdots t_{i+1}) p_i T_2\\
&=& T_1(t_{j-2} t_{j-4} \cdots t_i)(t_{j-1} t_{j-3} \cdots t_{i+1})( t_{j-2} t_{j-4} \cdots t_i) (l_{j-1} l_{j-2} \cdots l_i) T_2.
\end{eqnarray*}

Using the relations $t_it_jt_i=t_i$ if $|i-j|=1$ and $t_it_j=t_jt_i$ if $|i-j| \geq 2$ as in case 1, we can simplify the product of $t$'s to $(t_it_{i+2}\cdots t_{j-2})$.  Now we use the relation $r_i = p_{i+1} l_i$ and apply wallslide (Lemma \ref{wallslide1}) to produce all the $l$'s between $j$ and $n-1$, so that we will be able to slide the product of $l$'s (Theorem \ref{slide}) through $T_2$.  So we have:
\begin{eqnarray*}
&=&T_1(t_{j-2} t_{j-4} \cdots t_i)(l_{j-1}l_{j-2}\cdots l_i) T_2\\
&=&T_1(t_{j-2} t_{j-4} \cdots t_i)p_j(l_{j-1}l_{j-2}\cdots l_i) T_2\\
&=&r_jT_1(t_{j-2} t_{j-4} \cdots t_i)(l_jl_{j-1}l_{j-2}\cdots l_i) T_2\\
&=&r_jT_1(t_{j-2} t_{j-4} \cdots t_i)p_{j+1}(l_jl_{j-1}l_{j-2}\cdots l_i) T_2\\
&=&r_jr_{j+1}T_1(t_{j-2} t_{j-4} \cdots t_i)(l_{j+1}l_jl_{j-1}l_{j-2}\cdots l_i) T_2.\\
\end{eqnarray*}
We can continue this process until we arrive at:\\
$(r_jr_{j+1}\cdots r_{n-2}r_{n-1})T_1(t_{j-2} t_{j-4} \cdots t_i)(l_{n-1}l_{n-2}\cdots l_jl_{j-1}l_{j-2}\cdots l_i) T_2$.\\

Now the hypotheses for slide (Lemma \ref{slide}) are satisfied, we obtain:
$$Tp_i=(r_{n-1}r_{n-2}\cdots r_j)T_1(t_it_{i+2}\cdots t_{j-2})T_2' (l_{n-1}l_{n-2}\cdots l_i)$$ where $T_2' \in \langle t_i \ldots t_{n-2} \rangle$.

In conclusion, $$d=P_1Tp_iP_2=P_1(r_{n-1}r_{n-2}\cdots r_j)T_1(t_it_{i+2}\cdots t_{j-2})T_2' (l_{n-1}l_{n-2}\cdots l_i)P_2.$$ Notice that $\tau(P_1r_{n-1}r_{n-2}\cdots r_j)\leq\tau(P_1)$ and that $\tau((l_{n-1}l_{n-2}\cdots l_i)P_2)<\tau(P_2)$, implying that the from $P_1TP_2$ was not minimal.

Case 5: $i'$ is connected to $j$ on top with $j=i$\\
$T$ can be written as $T= T_1 T_2$ where  $T_1 \in \langle t_{1}, t_{2}, \ldots, t_{i-2} \rangle$,  $T_2 \in \langle t_{i+1}, t_{i+2}, \ldots, t_{n-1} \rangle$\\

The proof for this case is nearly identical to that of Case 4.  We apply the hop relation to $p_i$ to get it through $t_{i+1}$ if it appears in the product (otherwise we perform a wallslide), then produce $r_i$ and $l_i$ and commute $r_i$ through $T_1$, and furthermore, obtain some product of $l$'s and apply Theorem \ref{slide} to slide them through $T_2$.

\textit{Case 2}: If $T_2$ does not contain any factors of $t_{i+1}$, we apply wallslide (Lemma \ref{wallslide1}) to get $T p_i = r_i T_2 l_i = r_i T l_i$.  Note that we can commute the $r_i$ through $T_2$ since $T_2 \in \langle t_{i+2}, \ldots, t_{n-1} \rangle$, and $T_1 \in \langle t_1, \ldots t_{i-2} \rangle$
Thus $d=P_1TP_2=P_1r_iTl_iP_2$ and $\tau(P_1r_i)\leq\tau(P_1)$ and $\tau(l_iP_1)<\tau(P_1)$, by theorem \ref{ordering} since $i+1\in\tau(L)$ 


If (3) does not hold, we can prove that $d$ is not minimal using analogous reasoning.
\end{proof}

\subsubsection{Standard Form}

In this section we prove theorem \ref{std}, namely,  given a word $d\in M_n'$ written in the form $RTL$, where $R$ and $L$ are minimal, $d$ can be written as a word in standard form.  Note that the difference between $RTL$ and standard form concerns edges in $T$ whose endpoints are empty vertices of $R$ or $L$.  The following lemma covers an extreme case.

\begin{lemma}\label{t death}
 Let $T$ be an arbitrary element of $TL_n$ composed of generators whose indices are greater than or equal to some natural number $k$, with $k< n$. Let $E = \prod_{i=k}^{n}p_i$. Then $$E\cdot T \cdot E=E.$$
\end{lemma}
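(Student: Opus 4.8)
The plan is to prove $E\cdot T\cdot E=E$ by induction on the number of Temperley--Lieb generators occurring in the word $T$, the empty word being handled by idempotency of $E$. Since $E$ is a product of the pairwise--commuting idempotents $p_k,\dots,p_n$ we have $E^2=E$; along the way I will freely use the derived relations $p_ip_{i+1}=l_i^2=r_i^2$ (from relation (1) together with $r_il_i=p_i$, $l_ir_i=p_{i+1}$), the absorption relations $E\,p_m=p_m\,E=E$ for $k\le m\le n$, and the commutations $p_m t_i=t_i p_m$ and $p_m l_i=l_i p_m$ valid whenever $m\notin\{i,i+1\}$ (each a two--line consequence of the ``distance $\ge 2$'' part of relation (6) after rewriting $p_m$ as $r_ml_m$ or as $l_{m-1}r_{m-1}$).

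For the inductive step, write $T=t_i\,T'$ with $k\le i\le n-1$, so that both $p_i$ and $p_{i+1}$ appear among the factors of $E$; split $E=p_ip_{i+1}E''$ where $E''=\prod_{m\in\{k,\dots,n\}\setminus\{i,i+1\}}p_m$ commutes with $t_i$, $l_i$, $p_i$ and $p_{i+1}$. Using $p_ip_{i+1}t_i=l_it_i$ and its mirror $t_ip_ip_{i+1}=t_il_i$, and sliding $E''$ out of the way, the leading occurrence of $t_i$ can be displayed inside a block $l_i\,t_i\,l_i$; replacing that block by $p_ip_{i+1}$ via the key identity below deletes a Temperley--Lieb generator, after which $E''$ slides back and the absorption relations return the word to the shape $E\,T'\,E$, so the inductive hypothesis applies. (If the word must instead be broken at the right end, one argues symmetrically or applies the anti--isomorphism $*$, which exchanges $l_i\leftrightarrow r_i$ and fixes each $t_i$ and $E$.)

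The linchpin --- and the step I expect to carry the real weight of the proof --- is the purely relational identity
\[
l_i\,t_i\,l_i \;=\; p_ip_{i+1}\qquad(k\le i\le n-1),
\]
equivalently $p_ip_{i+1}\,t_i\,p_ip_{i+1}=p_ip_{i+1}$; it must be obtained from the relations themselves, since appealing to the diagrams here would be circular with the counting argument this section is building toward. My route is first to extract the auxiliary facts $t_il_i=t_il_i^2$ and $l_it_i=l_i^2t_i$: from $l_ir_il_i=l_i$ (relation (3a)) one has $t_il_i=t_i(l_ir_il_i)=(t_il_ir_i)l_i=(t_ir_i^2)l_i=t_il_i^2$, using $t_il_i=t_ir_i$ (relation (9a)) and $r_i^2=l_i^2=l_i^3$ (relation (1)), and the second fact follows by applying $*$ and then (9b). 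Combining these with $t_il_it_i=t_i$ (relation (12)) and relations (9) shows that $l_it_il_i$ is an idempotent, fixed by $*$, that absorbs both $l_i$ and $r_i$ from either side; squeezing the last bit of collapse out of relations (3) and (12) to identify this element with $l_i^2=p_ip_{i+1}$ --- rather than by comparison with a picture --- is the delicate endgame.
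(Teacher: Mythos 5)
Your overall strategy --- induction on the number of Temperley--Lieb generators, killing one generator per step by collapsing a $p$--$t$--$p$ sandwich --- is the same as the paper's, but the inductive step as you describe it does not go through. Writing $T=t_iT'$ and $E=p_ip_{i+1}E''$, you can certainly rewrite the left end as $E''l_it_i\,T'E$, but the second $l_i$ (equivalently the second copy of $p_ip_{i+1}$) needed to complete the block $l_it_il_i$ must come from the copy of $E$ on the \emph{far side} of $T'$. Since $p_i$ and $p_{i+1}$ fail to commute with $t_{i-1}$, $t_i$ and $t_{i+1}$, any of which may occur in $T'$, there is no way to slide that idempotent across $T'$ to sit next to the leading $t_i$; the same obstruction appears if you peel from the right or apply $*$. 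This transport problem is exactly what the paper's proof is organized around: it strips the generator adjacent to $E$, does a five-way case analysis on where the strand at vertex $i$ travels inside $T$ (using the non-standard Temperley--Lieb forms of the earlier lemma), and uses the fuse-wire lemma to carry a $p_j$ in from the opposite end along that strand, with the index changing from $j$ to $i$ en route. That case analysis is the real content of the proof and is entirely absent from your sketch.

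Your ``linchpin'' identity $l_it_il_i=p_ip_{i+1}$ (equivalently $p_it_ip_i=p_ip_{i+1}$, which the paper's own base case asserts ``from relation 9'') is correctly identified as the crux, and your auxiliary facts $t_il_i=t_il_i^2$ and $l_it_i=l_i^2t_i$ are fine; but the ``delicate endgame'' you defer cannot be completed from the listed relations. Indeed, the assignment sending every $l_j$ and $r_j$ to $1$ and every $t_j$ to $t$ in the two-element monoid $\{1,t\}$ with $t^2=t$ satisfies all twelve relations of Theorem \ref{mek} (each relation reduces to $1=1$ or $t=t$), yet it sends $l_it_il_i$ to $t$ and $p_ip_{i+1}=r_il_i\cdot l_ir_i$ to $1$. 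So the identity is independent of the stated relations, and no amount of manipulation of relations (1), (3), (9), (12) will produce it; the same model shows $ETE\neq E$ in $M_n'$ whenever $T$ is a nonempty word. Both your proof and the paper's therefore rest on a collapse that the presentation does not actually supply; to make either argument rigorous one must either add $p_it_ip_i=p_ip_{i+1}$ (or an equivalent) to the list of defining relations or exhibit a derivation from some relation not currently in the list.
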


\begin{proof} We proceed by induction on the number of letters in $T$. First consider the base case, when $T=t_i$ for some $i$ between $k$ and $n-1$. Then we obtain  $ETE=Ep_it_ip_iE=E$ by the relations $p_i^2=p_i, p_ip_j=p_jp_i$ and $p_i t_i p_i= p_ip_{i+1}$  (from relation 9).\\

Assume that $ETE=E$ whenever $T$ is the product of $m$ letters. We wish to show that $ETt_iE=ETE$ for any $i$ greater than or equal to $k$, thereby proving that $ETE=E$ whenever $T$ is the product of $m+1$ letters. Consider the following cases for what the $i'$th vertex of $T$ is connected to to.

Case 1: $i'$ is connected to $j'$ with $i'<j'-1$. Then $T$ is of the form $$T=T_1 (t_i t_{i+2}\cdots t_{j-1}) (t_{i+1} t_{i+3} \cdots t_{j-2}) T_2$$ where $T_1 \in \langle t_1 , \ldots t_{n-1} \rangle, T_2\in \langle t_{i+1} \ldots t_{j-2} \rangle$.
\begin{eqnarray*}
ETt_iE&=&ET_1 (t_i t_{i+2}\cdots t_{j-1}) (t_{i+1} t_{i+3} \cdots t_{j-2}) T_2t_iE\\
&=&ET_1 (t_i t_{i+2}\cdots t_{j-1}) (t_{i+1} t_{i+3} \cdots t_{j-2})p_j T_2t_iE\ \textrm{(by $p_i^2=p_i, p_ip_j=p_jp_i$)} \\
&=&ET_1 (t_i t_{i+2}\cdots t_{j-1}) (t_{i+1} t_{i+3} \cdots t_{j-2})p_jp_i T_2t_iE\ \textrm{(by theorem \ref{fuse}\ part (iii))}\\
&=&ET_1 (t_i t_{i+2}\cdots t_{j-1}) (t_{i+1} t_{i+3} \cdots t_{j-2})p_j T_2p_it_ip_iE\ \textrm{($p_i$ commutes with any $t$ in $T_2$)}\\
&=&ET_1 (t_i t_{i+2}\cdots t_{j-1}) (t_{i+1} t_{i+3} \cdots t_{j-2})p_j T_2E\  \textrm{(since $p_i t_i p_i = p_i$)} \\
&=&ETE=E\textrm{ (by the inductive hypothesis).}
\end {eqnarray*}

Case 2: $i'$ is connected to $j'$ with $k<j'<i'$.\\ Then $T$ is of the form $T_1 (t_j t_{j+2}\cdots t_{i-1}) (t_{j+1} t_{j+3} \cdots t_{i-2}) T_2$, where $T_1 \in \langle t_1 , \ldots t_{n-1} \rangle, T_2\in \langle t_{j+1} \ldots t_{i-2} \rangle$.
\begin{eqnarray*}
ETt_iE&=&ET_1 (t_j t_{j+2}\cdots t_{i-1}) (t_{j+1} t_{j+3} \cdots t_{i-2}) T_2t_iE\\
&=&ET_1 (t_j t_{j+2}\cdots t_{i-1}) (t_{j+1} t_{j+3} \cdots t_{i-2}) T_2t_ip_jE\ \textrm{(by $p_j^2=p_j, p_ip_j=p_jp_i$)}\\
&=&ET_1 (t_j t_{j+2}\cdots t_{i-1}) (t_{j+1} t_{j+3} \cdots t_{i-2})p_j T_2t_iE\ \textrm{(by commuting)}\\
&=&ET_1 (t_j t_{j+2}\cdots t_{i-1}) (t_{j+1} t_{j+3} \cdots t_{i-2})p_i T_2t_iE \textrm{ (by Theorem \ref{fuse}\ part\ (iv))}\\
&=&ET_1 (t_j t_{j+2}\cdots t_{i-1}) (t_{j+1} t_{j+3} \cdots t_{i-2}) T_2p_it_i p_iE\ \textrm{(by $p_i^2=p_i$, commuting)}\\
&=&ET_1 (t_j t_{j+2}\cdots t_{i-1}) (t_{j+1} t_{j+3} \cdots t_{i-2}) T_2E\ \textrm{(since $p_i t_i p_i = p_i$)}\\
&=&ETE=E.
\end{eqnarray*}

Case 3: $i'$ is connected to $j$, with $i'<j$.\\ Then $T$ is of the form $T_1 (t_i t_{i+2} \cdots t_{j-2})(t_{i+1} t_{i+3} \cdots t_{j-1}) T_2$ where $T_1 \in \langle t_1, \ldots t_{j-2} \rangle, T_2 \in \langle t_{i+1} \ldots t_{n-1} \rangle $
\begin{eqnarray*}
ETt_iE&=&ET_1 (t_i t_{i+2} \cdots t_{j-2})(t_{i+1} t_{i+3} \cdots t_{j-1}) T_2 t_i E\\
&=&ET_1 p_j (t_i t_{i+2} \cdots t_{j-2})(t_{i+1} t_{i+3} \cdots t_{j-1}) T_2 t_i E\ \textrm{(by $p_j^2=p_j$, commuting)}\\
&=& ET_1 (t_i t_{i+2} \cdots t_{j-2})(t_{i+1} t_{i+3} \cdots t_{j-1}) p_i T_2 t_i E \textrm{ (by Theorem \ref{fuse}\ part\ (i))}\\
&=& ET_1 (t_i t_{i+2} \cdots t_{j-2})(t_{i+1} t_{i+3} \cdots t_{j-1}) T_2 p_i t_i p_i E\ \textrm{(by $p_i^2=p_i$, commuting)}\\
&=& ET_1 (t_i t_{i+2} \cdots t_{j-2})(t_{i+1} t_{i+3} \cdots t_{j-1}) T_2 E\ \textrm{(since $p_i t_i p_i = p_i$)}\\
&=& ETE=E.
\end{eqnarray*}

Case 4: $i'$ is connected to $j$, with $j<i$. The proof of this case is analogous to Case 3.

Case 5. $i'$ is connected to $i$. This implies that $T$ does not contain $t_i$ or $t_{i-1}$. Thus we obtain
$ETt_iE=ETp_it_ip_iE=ETE=E$, since $p_i t_{i+1} = t_{i+1} p_i$.

Case 6: $i'$ is connected to $(i+1)'$. This implies by section \ref{okay} that $T=T't_i$, for some $T'$. Thus we obtain $$ETt_iE=ET't_i^2E=ET't_iE=ETE=E$$

\end{proof}

%

\begin{theorem}\label{std} Let $d$ be a word in $M_n'$ in the form $RTL$ where $R$ and $L$ are minimal. Then $d$ can be put into standard form.
\end{theorem}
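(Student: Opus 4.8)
The plan is to use Theorem~\ref{algorithm} to pin down $R$, $L$ and the ``active'' part of $T$ completely, and then to remove the ``dead'' part of $T$ using Lemma~\ref{t death} together with the Hop, Slide, Burrow and Wallslide lemmas. Since $R$ and $L$ are minimal, Theorem~\ref{algorithm} applies to $d=RTL$. Tracing, via the three implications of Theorem~\ref{algorithm}(1), each edge of $T$ incident to a vertex of $\beta(R)$ (following it through $R$ above and, if it is vertical, through $L$ below) shows that every vertex of $\tau(R)$ is actually incident to an edge of $d$; hence $\tau(R)=\tau(d)$, and symmetrically $\beta(L)=\beta(d)$. Writing $k=|\tau(d)|=|\tau(R)|=|\beta(R)|$ and $j=|\beta(d)|=|\beta(L)|=|\tau(L)|$, parts~(2) and~(3) of Theorem~\ref{algorithm} then give $\tau(L)=\{1,\dots,j\}$ and $\beta(R)=\{1,\dots,k\}$. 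So the diagrams $R$ and $L$ are exactly those underlying the canonical words $\prod_{i=1}^{k}R_i^{a_i}\prod_{i=k+1}^{n}p_i$ and $\prod_{i=j+1}^{n}p_i\prod_{i=0}^{j-1}L^{j-i}_{b_{j-i}}$ of the definition of standard form; since every defining relation of $RP_n$ and of $LP_n$ holds in $M_n'$ and those presentations are complete, we may rewrite $R$ and $L$ as these canonical words (which are still minimal). Applying the anti-involution $*$ of $M_n'$ (which preserves the relations and swaps $r_i\leftrightarrow l_i$, hence interchanges the $k>j$ and $k<j$ cases) we may further assume $k\ge j$.

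After this normalization, Theorem~\ref{algorithm}(1) partitions the edges of $T$ into \emph{active} edges (top endpoints in $\{1,\dots,k\}$, bottom endpoints in $\{1,\dots,j\}$) and \emph{dead} edges (top endpoints $>k$, bottom endpoints $>j$); a Temperley--Lieb diagram has no isolated vertices, so there are no others. Write $R=R_0\,(p_{k+1}\cdots p_n)$ and $L=(p_{j+1}\cdots p_n)\,L_0$, so $d=R_0\,(p_{k+1}\cdots p_n)\,T\,(p_{j+1}\cdots p_n)\,L_0$. If $k=j$, the active and dead edges of $T$ lie in the disjoint generator ranges $t_1,\dots,t_{j-1}$ and $t_{j+1},\dots,t_{n-1}$, so by the Temperley--Lieb standard form $T=T_{\mathrm{act}}T_{\mathrm{dead}}$ with $T_{\mathrm{act}}\in\langle t_1,\dots,t_{j-1}\rangle$ and $T_{\mathrm{dead}}\in\langle t_{j+1},\dots,t_{n-1}\rangle$; commuting $p_{k+1}\cdots p_n$ past $T_{\mathrm{act}}$ and applying Lemma~\ref{t death} (with its parameter equal to $j+1$) collapses $(p_{j+1}\cdots p_n)T_{\mathrm{dead}}(p_{j+1}\cdots p_n)$ to $p_{j+1}\cdots p_n$, so $d=R_0T_{\mathrm{act}}(p_{j+1}\cdots p_n)L_0=R\,T_{\mathrm{act}}\,L$, which is standard once $T_{\mathrm{act}}$ is placed in Temperley--Lieb standard form. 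If $k>j$, the two ranges overlap in $t_{j+1},\dots,t_{k-1}$, and this overlap is exactly what the caps $t_{j+1}t_{j+3}\cdots t_{k-1}$ of standard form absorb: here I would repeatedly apply the Hop lemma (Lemma~\ref{hopping}), which converts a dead end $p_i$ sitting just to the right of a run of alternating horizontal edges into a product of two alternating runs of $t$'s times $l_{k-1}l_{k-2}\cdots l_i$, to manufacture these caps, and use Slide, Burrow, Wallslide and Fuse wire (Lemmas~\ref{slide},~\ref{burrow},~\ref{wallslide1},~\ref{fuse}) to push every remaining dead edge to the far right, where Lemma~\ref{t death} again collapses it; the $l$'s produced are absorbed into $L_0$, which stays minimal. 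What remains of $T$ is then a word in $t_1,\dots,t_{k-1}$ followed by the caps $t_{j+1}t_{j+3}\cdots t_{k-1}$, and putting it into Temperley--Lieb standard form yields a word in standard $RTL$ form that still represents $d$.

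The $k=j$ reduction and the closing Temperley--Lieb normalization are routine given Lemma~\ref{t death} and the standard form for Temperley--Lieb words. The real difficulty is the $k>j$ case: one must check that, however the active and dead edges of $T$ interleave, the dead ends on the bottom row lying between positions $j$ and $k$ can always be hopped and burrowed past the active structure without disturbing it. I expect this to demand a case analysis on the five connection types of Lemma~\ref{okay}, closely paralleling the proof of Theorem~\ref{algorithm} but with heavier bookkeeping, and that is where essentially all the work lies.
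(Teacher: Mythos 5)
Your first paragraph and your $k=j$ case are sound and match the paper: minimality together with Theorem \ref{algorithm} forces $\beta(R)=\{1,\dots,k\}$ and $\tau(L)=\{1,\dots,j\}$ and splits the edges of $T$ into active ones (endpoints among top $1,\dots,k$ and bottom $1,\dots,j$) and dead ones, after which the dead part is annihilated by Lemma \ref{t death}. The genuine gap is the case $k\neq j$: there you give a plan rather than a proof, and you say yourself that this is ``where essentially all the work lies.'' Moreover the plan reaches for the wrong machinery. Hop, Burrow, Slide and Wallslide are the tools that bring a $PTP$ word \emph{into} minimal $RTL$ form; they do their heavy lifting in the proof of Theorem \ref{algorithm}, and by hypothesis that work is already finished here. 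Re-running a five-way case analysis on Lemma \ref{okay} at this stage would essentially reprove Theorem \ref{algorithm}.

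What the paper does instead in this case (WLOG $j=|\tau(L)|<|\beta(R)|=k$) is short. Since $R=R'\,p_{k+1}\cdots p_n$ and $L=p_{j+1}\cdots p_n\,L'$, the Temperley--Lieb word sits between the idempotents $p_{k+1}\cdots p_n$ and $p_{j+1}\cdots p_n$, and modulo these it can be rewritten as $T_a\,(t_{j+1}t_{j+3}\cdots t_{k-1})\,T_b$ with $T_a$ and the caps supported on indices at most $k-1$ and $T_b$ on indices at least $k+1$; the dead edges that reach into low positions (your ``interleaving'' worry, e.g.\ a dead edge from a top vertex above $k$ down to a bottom vertex just above $j$) are exactly the ones neutralized by transporting the flanking $p$'s along them via Fuse wire (Lemma \ref{fuse}), not by hopping. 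Then $p_{k+1}\cdots p_n$ commutes past $T_a$ and the caps, and a single application of Lemma \ref{t death} collapses $p_{k+1}\cdots p_n\,T_b\,p_{j+1}\cdots p_n$ to $p_{j+1}\cdots p_n$, leaving the standard word $R'\,T_a\,(t_{j+1}t_{j+3}\cdots t_{k-1})\,p_{j+1}\cdots p_n\,L'$. Until you carry out an argument of this kind (or actually execute the case analysis you only sketch), the main case of the theorem remains unproved.
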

\begin{proof}
We consider two cases.

Case 1: $|\beta(R)|\neq|\tau(L)|$
 Assume without loss of generality that $|\tau(L)|<|\beta(R)|$.
This means that no vertex on bottom with index greater than $|\tau(L)|$ can be connected to a vertex on bottom whose index is less than or equal to $|\tau(L)|$ or any vertex on top with index less than or equal to $|\beta(R)|$. Similarly, no vertex on top with index greater than $|\beta(R)|$ can be connected to a vertex on top whose index is less than or equal to $|\beta(R)|$ or any vertex on bottom with index less than or equal to $|\tau(L)|$. (If this were not the case, $L$ or $R$ would not be minimal, as proven in theorem \ref{algorithm}.)
The word $d$ can be written in the form
$$d=R'p_{|\beta(R)|+1}p_{|\beta(R)|+2}\cdots p_n T_a t_{|\tau(L)|+1}t_{|\tau(L)|+3}\cdots t_{|\beta(R)|-1}T_b p_{|\tau(L)|+1}p_{|\tau(L)|+2}\cdots p_n L'$$
for some $R'\in RP_n$ and some $L'\in LP_n$.
\begin{eqnarray*}
d&=&R'T_a t_{|\tau(L)|+1}t_{|\tau(L)|+3}\cdots t_{|\beta(R)|-1} p_{|\beta(R)|+1}p_{|\beta(R)|+2}\cdots p_nT_b p_{|\tau(L)|+1}p_{|\tau(L)|+2}\cdots p_nL'\\
&=&R'T_a t_{|\tau(L)|+1}t_{|\tau(L)|+3}\cdots t_{|\beta(R)|-1}p_{|\tau(L)|+1}p_{|\tau(L)|+2}\cdots p_nL' \textrm{ (by lemma \ref{t death})}
\end{eqnarray*}
which is in standard form.

Case 2:  $|\beta(R)|=|\tau(L)|=k$

Using similar reasoning as above, no vertex whose index is greater than $k$ can be connected to any vertex with index less than or equal to $k$. This implies that $T$ can be written without any factors of $t_{k}$. By the commutativity relations, $d$ can be written in the from $RT_a p_{k+1}p_{k+2}\cdots p_n T_b p_{k+1} p_{k+2}\cdots p_nL$, where $T_a\in \langle t_1, t_2,\ldots t_{k-1}\rangle$, and $T_b\in \langle t_{k+1},t_{k+2}\ldots, t_{n-1}\rangle$, which is equal to $RT_a p_{k+1}p_{k+2}\cdots p_nL$ by lemma \ref{t death}, which in standard form.
\end{proof}

\begin{theorem}\label{induction}
Every word in $M_n'$ is equivalent to a standard word in $M_n'$.
\end{theorem}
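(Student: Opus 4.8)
The plan is to prove Theorem~\ref{induction} by induction on the length (the number of generator letters) of a word $w \in M_n'$. The base case is the empty word, which represents the identity; as observed just before the definition of $PTP$ form, the identity is already a standard word. For the inductive step, assume every word of length less than $N$ is equivalent to a standard word, and let $w$ have length $N \geq 1$. Write $w = w' x_i$, where $w'$ has length $N-1$ and $x_i \in \{r_i, l_i, t_i\}$. By the inductive hypothesis $w'$ is equivalent to a standard word $RTL$, so $w$ is equivalent to $RTL\,x_i$, and it suffices to show that $RTL\,x_i$ can be standardized. This is done in three stages, each supplied by a result already established above.

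\emph{Stage 1 ($RTL\,x_i \to PTP$ form).} Since $RTL$ is in standard form, Lemma~\ref{rtlx to ptp} applies and shows $RTL\,x_i$ is equivalent to a word $P_1 T' P_2$ in $PTP$ form, with $P_1, P_2 \in P_n$ and $T' \in TL_n$. \emph{Stage 2 ($PTP$ form $\to$ minimal $RTL$ form).} Here we repeatedly apply the five shifting lemmas --- Hop (Lemma~\ref{hopping}), Slide (Lemma~\ref{slide}), Burrow (Lemma~\ref{burrow}), Wallslide (Lemma~\ref{wallslide1}), and Fuse wire (Lemma~\ref{fuse}) --- to migrate the dead-end (non-incident) vertices rightward. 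Concretely, the case analysis in the proof of Theorem~\ref{algorithm} shows that whenever the current word is \textbf{not} already of the form $RTL$ with $R \in RP_n$ and $L \in LP_n$ both minimal, one of these lemmas rewrites it so that the left planar factor weakly decreases and the right planar factor strictly decreases in the fixed order on the power set of $\{1,\dots,n\}$ (via Theorem~\ref{ordering}). Because that order is a total order on a finite set, it is well-founded, so after finitely many steps the process halts at a word $\widetilde{R}\,\widetilde{T}\,\widetilde{L}$ with $\widetilde{R}, \widetilde{L}$ minimal. \emph{Stage 3 (minimal $RTL$ $\to$ standard).} Theorem~\ref{std} then rewrites this minimal $RTL$ word in standard form. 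Composing the three stages, $RTL\,x_i$, and hence $w$, is equivalent to a standard word, which completes the induction.

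I expect Stage 2 to be the main obstacle, and it is the one that leans on the most delicate machinery. One must verify that the dichotomy driving Theorem~\ref{algorithm} is genuinely exhaustive --- covering every way that an offending vertex $i'$ (with $i \notin \tau(P_2)$ but $i+1 \in \tau(P_2)$, or the symmetric failure on the $P_1$ side) can be connected inside the Temperley--Lieb factor --- that the hypotheses of each of the five lemmas are actually met in the configuration at hand, and, crucially, that the chosen order-statistic \emph{strictly} (not merely weakly) decreases at each step so that termination is guaranteed. By contrast, Stage 1 is the finite case analysis on $\beta(L)$ already carried out in Lemma~\ref{rtlx to ptp}, and Stage 3 reduces to the absorption identity $E\,T\,E = E$ of Lemma~\ref{t death} as packaged in Theorem~\ref{std}; both are comparatively routine once the bookkeeping of Stage 2 is in place.
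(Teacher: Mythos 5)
Your proposal is correct and follows essentially the same route as the paper: induction on word length, then Lemma~\ref{rtlx to ptp} to reach $PTP$ form, Theorem~\ref{algorithm} (with the ordering of Theorem~\ref{ordering}) to reach minimal $RTL$ form, and Theorem~\ref{std} to reach standard form. Your Stage~2 discussion merely makes explicit the termination argument that the paper leaves implicit in its citation of Theorem~\ref{algorithm}.
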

\begin{proof}
We proceed by induction. Obviously  every single-letter word is equivalent to a standard word. Assume that every word of $k$ letters is equivalent to a word in standard form. Let $d$ be an arbitrary word of $k+1$ letters. Let $d_0$ be the product of the first $k$ letters of $d$, and let $x$ be the final letter of $d$. By the inductive hypothesis, $d_0$ is equivalent to a standard word. If $x\in P_n$, then $d_0x$ is already in PTP form. If $x\in TL_n$, then, by Lemma \ref{rtlx to ptp}, $d_0x$ can be put into $PTP$ form. 
Thus, $d$ is equivalent to a word in $PTP$ from. By theorems \ref{algorithm} and \ref{std}, this implies that $d$ is equivalent to a word in standard from. Thus every string of $k+1$ letters is equivalent to a word in standard form.

Thus every word in $M_n'$ is equivalent to a word in standard from.
\end{proof}
\begin{theorem} The monoid ${M_n'}$ is isomorphic to $M_n$.
\end{theorem}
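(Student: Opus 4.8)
The plan is to finish with a counting argument, just as in the proof of Theorem~\ref{RPn relations}. We already have that $\phi\colon M_n' \to M_n$ is a well-defined homomorphism, since the diagrams $r_i, t_i, l_i$ satisfy every defining relation of $M_n'$; and $\phi$ is surjective, because by Halverson's decomposition $M_n = RP_n\,TL_n\,LP_n$, while $RP_n$ is generated by the $r_i$ together with $p_i = r_i l_i$ (Theorem~\ref{generators}), $LP_n$ is generated analogously by the $l_i$ and the $p_i$, and $TL_n$ is generated by the $t_i$; hence every Motzkin diagram is a product of the generators and so lies in the image of $\phi$. It therefore suffices to prove $|M_n'| \le |M_n|$, for then the surjection $\phi$ between finite sets of equal cardinality is forced to be a bijection, and a bijective homomorphism is an isomorphism.

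First I would invoke Theorem~\ref{induction}: every element of $M_n'$ is represented by a standard word, so $|M_n'|$ is at most the number of distinct standard words. Next I would check that $\phi$ is injective on the set of standard words, which bounds that number by $|M_n|$. A standard word $RTL$ is determined, as a formal word, by the subsets $\tau(d), \beta(d) \subseteq [n]$ (which fix $R$ and $L$ through the formulas in the definition) together with a Temperley--Lieb word $T$ in Temperley--Lieb standard form meeting the stated constraints. Applying $\phi$ reproduces exactly Halverson's decomposition of the resulting diagram: $\phi(R)$ and $\phi(L)$ are the planar rook diagrams determined by the top set $\tau(d)$ and bottom set $\beta(d)$, and $\phi(T)$ is the Temperley--Lieb diagram $d'$ of the algorithm, presented by its standard word. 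Since the standard form of a Temperley--Lieb word is a genuine normal form --- distinct standard Temperley--Lieb words name distinct Temperley--Lieb diagrams, as recalled above --- the data $\tau(d)$, $\beta(d)$, and $\phi(T)$ can be read back off the diagram $\phi(RTL)$, and this data determines the standard word. Hence $\phi$ is injective on standard words; in fact the standard words are then in bijection with the legal outputs of Halverson's algorithm, that is, with $M_n$ itself.

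Assembling the inequalities, $|M_n'| \le (\text{number of standard words}) = |M_n| = |\phi(M_n')| \le |M_n'|$, so all of these quantities are equal, $\phi$ is a bijection, and therefore an isomorphism. The only step needing real care is the injectivity of $\phi$ on standard words, and this reduces to the already-established uniqueness of normal forms in $TL_n$ together with the elementary fact that the sets of non-empty top and bottom vertices of a diagram --- and the positions of its empty vertices after they are shifted rightward --- can be recovered from the diagram. Everything else is bookkeeping.
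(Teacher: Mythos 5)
Your argument is correct and is essentially the paper's own: both rely on Theorem~\ref{induction} to bound $|M_n'|$ by the number of standard words, identify standard words bijectively with Halverson's standard-form diagrams, and conclude via surjectivity of $\phi$ that the counts coincide and $\phi$ is an isomorphism. You merely spell out more explicitly the injectivity of $\phi$ on standard words and the surjectivity step, which the paper states as "clearly" and leaves implicit, respectively.
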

\begin{proof}
By theorem \ref{induction}, we know that the number of distinct words in ${M_n'}$ is equal to the number of standard words. Clearly there is a one-to-one correspondence between words in standard form and diagrams in $M_n$ which are in standard form. Every diagram in $M_n$ is equivalent to a diagram in standard from. Thus $|M_n'|=|{M_n}|$. This implies that the homomorphism $\phi$ is an isomorphism.
\end{proof}

\section{Conclusion}
In this paper we have introduced the rook, planar rook, left and right planar rook monoids, the Temperley-Lieb algebra, and the Motzkin Monoid.  We have given a complete presentation of the Motzkin Monoid through a set of moves (hop, burrow, wallslide), whose proof at times can get very involved.   Monoid presentations are very important in order for us to understand a monoid, and in order for us to define morphisms to other monoids.  The Motzkin Monoid is used extensively in \cite{motzkin}.  Another example of a presentation is completed in \cite{penny}.  In this paper, David Penneys gives a presentation of the Annular Temperley-Lieb category.

\section{Acknowledgments}

The mathematical content described in this paper is derived from a collaboration
over the summer of 2011 at the REU program at the University of California, Santa Barbara.  The group consisted of Megan Ly (Loyola Marymount University), Eliezer Posner (City College of New York), and Kristofer Hatch (University of California, Santa Barbara).\\

	The REU was supported by the National Science Foundation Research Experience for Undergraduates Program (NSF-REU Grant DMS-0852065), and was conducted under the direction of our advisor, Stephen Bigelow.



\begin{thebibliography}{aaaa}


\bibitem[1]{Halverson} Daniel Flath, Tom Halverson,\ \& Kathryn Herbig. \textit{The Planar Rook Algebra and Pascal's Triangle}. 	arXiv:0806.3960v1 [math.RT] 24 June 2008.

\bibitem[2]{Kathy} Kathryn E. Herbig. \textit{The Planar Rook Monoid} (2006).  Honors Projects. Paper 1. 
http://digitalcommons.macalester.edu/mathcs\_honors/1

\bibitem[3]{temperley-lieb} Anne Moore. \textit{Representations of the Temperley-Lieb Algebra} (2008).  Honors Projects. Paper 11. 
http://digitalcommons.macalester.edu/mathcs\_honors/11

\bibitem[4]{motzkin} Georgia Benkart\ \& Tom Halverson. \textit{Motzkin Algebras}.  	arXiv:1106.5277v1 [math.CO] 26 June 2011.

\bibitem[5]{algebra} Joseph A. Gallian. \textit{Contemporary Abstract Algebra, 7th edition}. Brooks/Cole, Belmont, CA, 2006.

\bibitem[6]{rook} Louis Solomon. \textit{Representations of the rook monoid}.  Journal of Algebra 256 pp.309-342 2002.








\end{thebibliography}
\end{document}